\newtheorem{proposition}{Proposition}[section]
\newtheorem{theorem}[proposition]{Theorem}
\newtheorem{lemma}[proposition]{Lemma}
\newtheorem{corollary}[proposition]{Corollary}
\newtheorem{definition}[proposition]{Definition}
\newtheorem{remark}[proposition]{Remark}
\renewenvironment{proof}{\medskip\noindent{\textbf{Proof.}}%
  \hspace{1pt}}{\hspace{-5pt}{\nobreak\quad\nobreak\hfill\nobreak%
    $\square$\vspace{2pt}\par}\smallskip\goodbreak}
\newenvironment{proofof}[1]{\smallskip\noindent{\textbf{Proof~of~#1.}}%
  \hspace{1pt}}{\hspace{-5pt}{\nobreak\quad\nobreak\hfill\nobreak%
    $\square$\vspace{2pt}\par}\smallskip\goodbreak}
\numberwithin{equation}{section}
\numberwithin{figure}{section}
\numberwithin{table}{section}
\renewcommand{\phi}{\varphi}
\renewcommand{\theta}{\vartheta}
\renewcommand{\epsilon}{{\color{blue}\varepsilon}}
\renewcommand{\L}[1]{\mathbf{L^#1}}
\renewcommand{\div}{\mathinner{\mathop{\rm div}}}
\renewcommand{\d}[1]{\mathinner{\mathrm{d}{#1}}}
\newcommand{\C}[1]{\mathbf{C^{#1}}}
\newcommand{\Cc}[1]{\mathbf{C_c^{#1}}}
\newcommand{\W}[2]{\mathbf{W^{#1,#2}}}
\newcommand{\BV}{\mathbf{BV}}
\newcommand{\modulo}[1]{{\left|#1\right|}}
\newcommand{\norma}[1]{{\left\|#1\right\|}}
\newcommand{\reali}{{\mathbb{R}}}
\newcommand{\naturali}{{\mathbb{N}}}
\newcommand{\Lip}{\mathop\mathbf{Lip}}
\newcommand{\tv}{\mathop\mathrm{TV}}
\renewcommand{\O}{\mathinner{\mathcal{O}(1)}}
\newcommand{\spt}{\mathop\mathrm{spt}}
\newcommand{\pint}[1]{\mathaccent23{#1}}
\newcommand{\sign}{\mathop{\rm sign}}
\newcommand{\eps}{\varepsilon}
\newcommand{\comp}{\mathop\bigcirc}
\newcommand{\BC}{\mathop\mathbf{BC}}     
\newcommand{\BCvarF}[2]{\mathbf{BC^{#1,#2}}}        
\newcommand{\N}{\mathbb{N}}
\newcommand{\mA}{\mathcal{A}}
\newcommand{\mD}{\mathcal{D}}
\newcommand{\hlf}{\frac{1}{2}}
\newcommand\spa{{\cal U}} \newcommand\spb{{\cal W}}
\newcommand\vra{u}  
\newcommand\doa{\mathcal{D}^{\mathcal{U}}}
\newcommand\Doa{\mathcal{A}_{\mathcal{U}}}
\definecolor{dblue}{rgb}{0,0,0.7}
\begin{document}

\title{On the Coupling of Well Posed Differential Models\\
  Detailed Version}

\author{R.M.~Colombo\footnotemark[1] \and M.~Garavello\footnotemark[2]
  \and M.~Tandy\footnotemark[3]}

\maketitle

\footnotetext[1]{Unit\`a INdAM \& Dipartimento di Ingegneria
  dell'Informazione, Universit\`a di Brescia, Italy.\hfill\\
  \texttt{rinaldo.colombo@unibs.it}}

\footnotetext[2]{Dipartimento di Matematica e Applicazioni,
  Universit\`a di Milano -- Bicocca, Italy.\hfill\\
  \texttt{mauro.garavello@unimib.it}}

\footnotetext[3]{Department of Mathematical Sciences, NTNU Norwegian
  University of Science and Technology, Norway.\hfill\\
  \texttt{matthew.tandy@ntnu.no}}

\begin{abstract}

  Consider the coupling of $2$ evolution equations, each generating a
  global process. We prove that the resulting system generates a new
  global process. This statement can be applied to differential
  equations of various kinds. In particular, it also yields the well
  posedness of a predator--prey model, where the coupling is in the
  differential terms, and of an epidemiological model, which does not
  fit previous well posedness results.

  \medskip

  \noindent\textbf{Keywords:} Processes in Metric Spaces; Well
  Posedness of Evolution Equations; Coupled Problems.

  \medskip

  \noindent\textbf{MSC~2020:} 34G20; 35M30; 35L65; 35F30.
\end{abstract}

\newpage

\tableofcontents

\newpage

\section{Introduction}
\label{sec:introduction}

A variety of models describing the evolution in time of real
situations is obtained coupling simpler models devoted to specific
subsystems. In this paper we provide a framework where the well
posedness of the \emph{``big''} model follows from that of its parts.

Predictive models consisting of couplings of evolution equations,
possibly of different types, are very common in the applications of
mathematics. Here we only note that their use ranges, for instance,
from epidemiology~\cite{MR4283943, mod, ColomboMarcelliniRossi}, to
traffic modeling~\cite{MR3264413, MR2765683}, to several specific
engineering applications~\cite{MR3229002, MR3559371}.

In this manuscript, the core result is set in a metric space, so that
linearity plays no role whatsoever. This also allows the range of
applicability of the general theorem to encompass, for instance,
ordinary, partial and measure differential equations. In each of these
cases, we obtain stability estimates tuned to the metric structure
typical of the specific evolution equation considered, which can be,
for example, the Euclidean norm in $\reali^n$, the $\L1$ norm in
spaces of $\BV$ functions or some Wasserstein type distance between
measures.

At the abstract level, the starting point is provided by the framework
of evolution equations in metric spaces, see~\cite{MR1230367, MR1712751,
  BressanCauchy, zbMATH05509403, LorenzBook, Panasyuk}. In this
setting, an evolution equation is well posed as soon as it generates a
\emph{Global Process}, i.e., a Lipschitz continuous solution operator,
see Definition~\ref{def:Global}. In other words, global processes
substitute, in the time dependent case, semigroups that, in the
autonomous case, have as trajectories the solutions to evolution
equations.

Assume that two evolution equations are given, each depending on a
parameter and each generating a global process, also depending on that
parameter. We now let the parameter in an equation vary in time
according to the other equation: a coupling between the two models is
thus obtained. Theorem~\ref{thm:Metric} ensures the well posedness of
this coupled model, in the sense that it generates a new global
process.

The assumptions required in this abstract construction are then
verified in $5$ sample situations: ordinary differential equations,
initial and boundary value problems for renewal equations, measure
valued balance laws and scalar conservation laws. Thus, we prove that
any coupling of these equations results in a well posed model. Indeed,
in each of these cases, we provide a full set of detailed stability
estimates compatible with the abstract results. Note that assumptions
ensuring global in time existence results are also provided.

Finally, we consider specific cases. First, we briefly show that
Theorem~\ref{thm:Metric} comprises the case of the traffic model
introduced in~\cite{MR2765683}, where a scalar conservation law is
coupled to an ordinary differential equation.

Then, we detail the case of a predator--prey model inspired
by~\cite{MR4030510}, namely
\begin{equation}
  \label{eq:45}
  \left\{
    \begin{array}{l}
      \displaystyle
      \partial_t \rho
      +
      \div_x \left(
      \rho \;
      V\!\left(t, x, p (t)\right)
      \right)
      =
      - \eta \left(\norma{p (t) - x} \right)\, \rho (t,x)
      \\
      \dot p = U \left(t, p, \rho (t)\right) \,.
    \end{array}
  \right.
\end{equation}
While we refer to~\S~\ref{subsec:consensus-game} for a detailed
explanation of the terms in~\eqref{eq:45}, here we remark that
in~\eqref{eq:45} the coupling is not only in the source term of the
partial differential equations, but also in the convective term, where
no nonlocal term is involved ($V$ is a function defined for
$t \in \reali$, $x \in \reali^n$ and $P (t) \in \reali^n$).

Then, we apply the general construction to a recent epidemiological
model presented in~\cite{ColomboMarcelliniRossi} whose well posedness,
to our knowledge, was not proved at the time of this writing. In this
case, the coupling involves a boundary value problem for a renewal
equation, see~\S~\ref{subsec:an-epid-model}.

\medskip

For all basic results on evolution equations in metric spaces, we
refer to the extended treatises~\cite{MR1230367, MR1712751,
  LorenzBook}, whose wide bibliographies also give a detailed view on
the whole field. Below, we follow the approach outlined
in~\cite{BressanCauchy, zbMATH05509403, Panasyuk}. The different
frameworks differ in their approaches but offer similar
results. Related to Theorem~\ref{thm:Metric} is, for instance,
\cite[Theorem~26]{LorenzBook}. However, here we follow a more
quantitative approach to the various stability estimates.

We expect that also other equations fit in the framework introduced in
Section~\ref{sec:defin-abstr-results}. Natural candidates are, for
instance, measure differential equations~\cite{MR3961299, MR4026977}
and their coupling with ordinary differential equations as considered
in~\cite{GongPiccoli2022}. A further class of couplings is that
in~\cite{MR3229002}, consisting of ordinary and partial differential
equations similar to those comprised in \S~\ref{subsec:IBVP2}. Very
likely to comply with the present structure is also the general class
of traffic models presented in~\cite{MR3556545}.

\medskip

This work is organized as
follows. Section~\ref{sec:defin-abstr-results}, once the basic
notation is introduced, presents the general result. Each of the
paragraphs in Section~\ref{sec:part-cauchy-probl} is devoted to a
particular evolution equation: its well posedness is proved obtaining
those estimates that allow the application of
Theorem~\ref{thm:Metric}. Specific models are then dealt with in
Section~\ref{sec:coupled-problems}. Finally, proofs are in the final
Section~\ref{sec:technical-details}.

\section{Definitions and Abstract Results}
\label{sec:defin-abstr-results}

Below we rely on the framework established in~\cite{BressanCauchy,
  zbMATH05509403, Panasyuk}, see~\cite{MR1230367, MR1712751,
  LorenzBook} for an alternative, essentially equivalent, setting. Let
$(X,d)$ be a metric space and $I$ be a real interval. First, a
\emph{local flow} on $X$ provides a sort of tangent vector field to
$X$.

\begin{definition}[{\cite[Definition~2.1]{zbMATH05509403}}]
  \label{def:local}
  Given $\delta>0$ and a closed set ${\mathcal D} \subseteq X$, a
  \emph{local flow} is a continuous map
  $F \colon [0,\delta] \times I \times {\mathcal D} \mapsto X$, such
  that $F\left(0,t_o\right)u=u$ for any
  $(t_o,u)\in I\times {\mathcal D}$ and which is Lipschitz in its
  first and third arguments uniformly in the second, i.e.~there exists
  a $\Lip(F) > 0$ such that for all $\tau,\tau' \in [0,\delta]$ and
  $u,u' \in {\mathcal D}$
  \begin{equation}
    d \left( F(\tau,t_o)u, F(\tau',t_o) u' \right)
    \leq
    \Lip(F) \cdot \left( d(u,u') + \modulo{\tau - \tau'}\right) \,.
  \end{equation}
\end{definition}

Given an evolution equation, a \emph{global process} is a candidate
for the solution operator, i.e., for the mapping assigning to initial
datum $u$ at time $t_o$ and to time $t$ the solution evaluated at time
$t$.

\begin{definition}[{\cite[Definition~2.5]{zbMATH05509403}}]
  \label{def:Global}
  Fix a family of sets ${\mathcal D}_{t_o}\subseteq{\mathcal D}$ for
  all $t_o\in I$, and a set
  \begin{equation}
    \label{eq:10}
    {\mathcal A}
    =
    \left\{
      (t,t_o,u) \colon t \geq t_o,\  t_o,t \in I
      \mbox{ and }u \in {\mathcal D}_{t_o}
    \right\}.
  \end{equation}
  A \emph{global process} on $X$ is a map
  $P \colon {\mathcal A} \mapsto X$ such that, for all
  $u \in {\mathcal D}_{t_o}$ and $t_o,t_1,t_2 \in I$ with
  $t_2 \geq t_1 \geq t_o$,
  \begin{eqnarray}
    \label{eq:process0}
    & &
        P(t_o,t_o) u = u
    \\
    \label{eq:process1}
    & &
        P(t_1,t_o)u\in{\mathcal D}_{t_1}
    \\
    \label{eq:process2}
    & &
        P ( t_2, t_1) \circ P(t_1,t_o)u
        =
        P(t_2,t_o)u.
  \end{eqnarray}
\end{definition}

In Theorem~\ref{thm:main} below, a global process is constructed from
a local flow by means of a suitable extension of \emph{Euler
  Polygonals} to metric spaces.

\begin{definition}[{\cite[Definition~2.3]{zbMATH05509403}}]
  \label{def:Euler-polygonals}
  Let $F$ be a local flow. Fix $u \in {\mathcal D}$, $t_o \in I$,
  $\tau \in [0,\delta]$ with $t_o+\tau \in I$. For every
  $\varepsilon > 0$, let $k = \lfloor\tau / \varepsilon\rfloor$, where
  the symbol $\lfloor \cdot \rfloor$ denotes the integer part. An
  Euler $\varepsilon$-polygonal is
  \begin{equation}
    \label{eq:polygonal}
    F^\varepsilon (\tau,t_o) \, u
    =
    F(\tau-k\varepsilon, t_o+k\varepsilon) \circ
    \comp_{h=0}^{k-1} F(\varepsilon, t_o+h\varepsilon) \, u
  \end{equation}
  whenever it is defined.
\end{definition}

\noindent Above, we used the notation
$\comp_{h=0}^{k} f_h = f_k \circ f_{k-1} \circ \ldots \circ
f_1 \circ f_0$.

For a local flow $F$, its corresponding Euler $\varepsilon$-polygonal
$F^\varepsilon$, and any $t_o \in I$, introduce the notation:
\begin{equation}
  \label{def:D3}
  \!\!\!
  {\mathcal D}^3_{t_o}
  =
  \left\{
    u\in{\mathcal D} \colon \!
    \begin{array}{l}
      F^{\varepsilon_3} (\tau_3,t_o+\tau_1+\tau_2) \circ
      F^{\varepsilon_2} (\tau_2,t_o+\tau_1) \circ
      F^{\varepsilon_1} (\tau_1,t_o)u
      \\
      \mbox{is in } {\mathcal D}
      \mbox{ for all }
      \varepsilon_1, \varepsilon_2, \varepsilon_3 \in
      \left]0,\delta \right]
      \mbox{ and all}
      \\ \tau_1, \tau_2, \tau_3\geq 0 \mbox{ such that }
      t_o+\tau_1+\tau_2+\tau_3\in I
    \end{array}\!
  \right\}.
\end{equation}

The next result provides the basis for our construction of solutions
to coupled problems.

\begin{theorem}[{\cite[Theorem~2.6]{zbMATH05509403}}]
  \label{thm:main}
  Let $(X,d)$ be a complete metric space and $\mathcal{D}$ be a closed
  subset of $X$. Assume that for the local flow
  $F \colon [0,\delta] \times I \times {\mathcal D} \mapsto X$ there
  exist
  \begin{enumerate}
  \item \label{it:first} a non decreasing map
    $\omega \colon [0,\delta] \to \reali_+$ with
    $\int_0^{\delta} \frac{\omega(\tau)}{\tau} \, d\tau < +\infty$
    such that
    \begin{equation}
      \label{eq:k}
      d \left(
        F ( k \tau, t_o+\tau) \circ F(\tau,t_o) u,
        F \left( (k+1) \tau, t_o \right)u
      \right)
      \leq
      k\, \tau \; \omega(\tau)
    \end{equation}
    whenever $\tau \in [0,\delta]$, $k\in \naturali$ and the left hand
    side above is well defined;
  \item \label{it:second} a positive constant $L$ such that
    \begin{equation}
      \label{eq:Stability}
      d \left( F^\varepsilon (\tau,t_o) u_1, F^\varepsilon (\tau,t_o) u_2 \right)
      \leq L \; d(u_1,u_2)
    \end{equation}
    whenever $\varepsilon \in \left]0, \delta\right]$,
    $u_1, u_2 \in {\mathcal D}$, $\tau\geq 0$, $t_o,t_o + \tau \in I$
    and the left hand side above is well defined.
  \end{enumerate}
  \noindent Then, there exists a family of sets ${\mathcal D}_{t_o}$,
  for $t_o \in I$, and a unique global process (as in
  Definition~\ref{def:Global}) $P \colon {\mathcal A} \to X$ with the
  following properties:
  \begin{enumerate}
  \item ${\mathcal D}^3_{t_o}\subseteq{\mathcal D}_{t_o}$ for any
    $t_o\in I$, with ${\mathcal D}^3_{t_o}$ as defined
    in~\eqref{def:D3};
  \item $P$ is Lipschitz continuous with respect to
    $(t,t_o,u) \in \mathcal{A}$;
  \item
    \label{fgsfssald3}
    $P$ is tangent to $F$ in the sense that for all
    $(t_o+\tau,t_o,u) \in {\mathcal A}$, with
    $\tau \in \left]0, \delta\right]$:
    \begin{equation}
      \label{eq:tangent}
      \frac{1}{\tau} \;
      d\left( P(t_o+\tau,t_o)u, F(\tau,t_o)u \right)
      \leq
      \frac{2L}{\ln (2)}
      \int_0^\tau \frac{\omega(\xi)}{\xi} \, \d{\xi} \,.
    \end{equation}
  \end{enumerate}
\end{theorem}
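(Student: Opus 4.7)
The plan is to define $P$ as a limit of the Euler polygonals $F^\varepsilon$ as $\varepsilon \to 0^+$, following the classical Bressan/Panasyuk scheme in metric spaces. Fixing $u \in \mathcal{D}^3_{t_o}$, $t_o \in I$ and an admissible time increment $\tau$, I would show that the net $\{F^\varepsilon(\tau,t_o)u\}_{\varepsilon \in (0,\delta]}$ is Cauchy in $X$. Since $X$ is complete and the pointwise limit is taken over elements of the closed set $\mathcal{D}$, the candidate definition
\begin{equation*}
  P(t_o+\tau,t_o)\,u \;=\; \lim_{\varepsilon \to 0^+} F^\varepsilon(\tau,t_o)\,u
\end{equation*}
will be well posed on a domain $\mathcal{D}_{t_o}$ that contains $\mathcal{D}^3_{t_o}$; the superscript $3$ in~\eqref{def:D3} is precisely what is needed to handle compositions of three processes later on.

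The heart of the argument is a halving estimate. Comparing $F^\varepsilon$ with $F^{\varepsilon/2}$ on a common grid of $n=\lfloor\tau/\varepsilon\rfloor$ nodes, each macro-step $F(\varepsilon,t_o+h\varepsilon)$ is replaced by $F(\varepsilon/2,t_o+h\varepsilon+\varepsilon/2)\circ F(\varepsilon/2,t_o+h\varepsilon)$; hypothesis~\ref{it:first} with $k=1$ and step $\varepsilon/2$ bounds the per-step mismatch by $(\varepsilon/2)\,\omega(\varepsilon/2)$, and hypothesis~\ref{it:second} lets us propagate each localized error to the terminal time with the single global factor $L$ rather than an exponentially growing constant. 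Summing over the $n$ nodes yields a bound of the shape $L\,\tau\,\omega(\varepsilon/2)$. Iterating this halving from $\varepsilon$ down to $\varepsilon/2^n$ gives
\begin{equation*}
  d\bigl(F^\varepsilon(\tau,t_o)u,\; F^{\varepsilon/2^n}(\tau,t_o)u\bigr)
  \;\le\; L\,\tau \sum_{k=1}^{n} \omega\!\left(\varepsilon/2^k\right),
\end{equation*}
and the monotonicity of $\omega$ together with the comparison $\int_{\tau/2^k}^{\tau/2^{k-1}} \omega(\xi)/\xi\,\d{\xi}\ge \omega(\tau/2^k)\ln 2$ converts the dyadic sum into $(1/\ln 2)\int_0^\varepsilon \omega(\xi)/\xi\,\d{\xi}$, which is finite by assumption. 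This both establishes the Cauchy property and, taking $\varepsilon=\tau$ and observing $F(\tau,t_o)u = F^\tau(\tau,t_o)u$, yields the tangent inequality~\eqref{eq:tangent} with constant of the order $L/\ln 2$; the extra factor of $2$ comes from bounding $\omega(\varepsilon/2)$ by its value at $\varepsilon$ at the first halving step.

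With the limit $P$ in hand, property~\eqref{eq:process0} is immediate from $F(0,t_o)u=u$; the Lipschitz continuity in $u$ inherits directly from~\eqref{eq:Stability} by passing to the limit, while Lipschitz dependence on $(\tau,t_o)$ follows by combining the tangent estimate with $\Lip(F)$ from Definition~\ref{def:local}. The semigroup law~\eqref{eq:process2} is proved by fixing $t_1$ and comparing $F^\varepsilon(t_2-t_o,t_o)u$ with $F^\varepsilon(t_2-t_1,t_1)\circ F^\varepsilon(t_1-t_o,t_o)u$: the two polygonals differ only near $t_1$, where a single macro-step straddling $t_1$ is cut in two, producing an error controlled once more by hypothesis~\ref{it:first}, and~\eqref{it:second} propagates this error to $t_2$; sending $\varepsilon\to 0$ gives equality. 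Finally, invariance~\eqref{eq:process1} and the domain inclusion $\mathcal{D}^3_{t_o}\subseteq \mathcal{D}_{t_o}$ are obtained from the three-polygonal definition~\eqref{def:D3}, which guarantees that the relevant compositions at intermediate times stay inside~$\mathcal{D}$.

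The main obstacle I expect is bookkeeping rather than any conceptual leap: one must verify that each Euler polygonal appearing in the halving and semigroup arguments is \emph{defined} (the range stays in $\mathcal{D}$) at the relevant times, and this is exactly what forces the rather intricate triple-polygonal domain~\eqref{def:D3}. Getting the order of quantifiers right between the time interval $I$, the step sizes $\varepsilon_j$, and the intermediate images, and then showing that $\mathcal{D}^3_{t_o}$ really is mapped into $\mathcal{D}_{t_1}$ by $P(t_1,t_o)$ so that the concatenation $P(t_2,t_1)\circ P(t_1,t_o)$ makes sense, is the most delicate technical point.
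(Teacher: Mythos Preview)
The paper does not supply its own proof of this theorem: it is quoted verbatim from \cite[Theorem~2.6]{zbMATH05509403} and used as a black box (Section~\ref{sec:technical-details} contains proofs only of Theorem~\ref{thm:Metric} and Theorem~\ref{thm:Metric2}). Your proposal reconstructs the standard argument from that reference --- Euler polygonals, dyadic halving controlled by hypothesis~\ref{it:first}, forward propagation of local errors via the uniform stability constant~$L$ from hypothesis~\ref{it:second}, and conversion of the geometric sum $\sum_k \omega(\varepsilon/2^k)$ into $\tfrac{1}{\ln 2}\int_0^\varepsilon \omega(\xi)/\xi\,\d\xi$ by monotonicity --- and is essentially correct. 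The only point worth tightening is the telescoping: when you compare $F^\varepsilon$ with $F^{\varepsilon/2}$, the error at node~$j$ must be transported to the terminal time by a \emph{single} Euler polygonal (say $F^{\varepsilon/2}$) so that~\eqref{eq:Stability} applies directly; a hybrid-path argument (switching from $F^{\varepsilon/2}$ to $F^\varepsilon$ at successive nodes) does this cleanly, and you should make that explicit rather than leaving it implicit in ``hypothesis~\ref{it:second} lets us propagate''.
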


\noindent A general condition to ensure that $\mathcal{A}$ is non
empty is~\cite[Condition~\textbf{(D)}]{zbMATH05509403}. Below, in the
examples we consider, it explicitly stems out that
$\mathcal{A} \neq \emptyset$.

We now head towards considering processes depending on parameters.

\begin{definition}
  \label{def:LipProc}
  Let $(\mathcal{U}, d_{\mathcal{U}})$ and
  $(\mathcal{W},d_{\mathcal{W}})$ be metric spaces. A \emph{Lipschitz
    Process on $\mathcal{U}$ para\-metrized by $w \in \mathcal{W}$} is
  a family of maps $P^w \colon \mathcal{A}_{\mathcal{U}} \to \mathcal{U}$, with
  \begin{eqnarray*}
    \mathcal{I}
    & =
    & \left\{(t,t_o) \in I \times I \colon t \geq t_o\right\} \,,
    \\
    \Doa
    & =
    & \left\{
      (t, t_o, \vra) \colon
      (t,t_o) \in \mathcal{I} \,,\; \vra \in \doa_{t_o}
      \right\} \,,
    \\
    \mathcal{D}_t^{\mathcal{U}}
    & \subseteq
    & \mathcal{U} \,,
  \end{eqnarray*}
  such that for all $w \in \mathcal{W}$, $P^w$ is a Global Process in
  the sense of Definition~\ref{def:Global} and there exist positive
  constants $C_u,C_t,C_w$ such that
  \begin{eqnarray}
    \label{eq:6}
    d_{\mathcal{U}}\left(P^{w}{(t,t_o)} u_1 , P^{w}{(t,t_o)} u_2\right)
    & \leq
    & e^{C_u (t-t_o)}\; d_{\mathcal{U}}(u_1, u_2) \,,
    \\
    \label{eq:7}
    d_{\mathcal{U}}\left(P^{w}{(t_1,t_o)} u , P^{w}{(t_2,t_o)} u\right)
    & \leq
    & C_t \; \modulo{t_2 - t_1} \,,
    \\
    \label{eq:8}
    d_{\mathcal{U}}\left(P^{w_1}{(t,t_o)} u_o, P^{w_2}{(t,t_o)} u_o\right)
    & \leq
    & C_w \, (t-t_o) \; d_{\mathcal{W}}(w_1,w_2) \,.
  \end{eqnarray}
\end{definition}

We equip the product space $\mathcal{U} \times \mathcal{W}$ with the
distance
\begin{displaymath}
  d\left((u',w'), (u'',w'')\right)
  =
  d_{\mathcal{U}} (u',u'') + d_{\mathcal{W}} (w',w'').
\end{displaymath}

\begin{theorem}
  \label{thm:Metric}
  Let $(\mathcal{U},d_{\mathcal{U}})$ and
  $(\mathcal{W},d_{\mathcal{W}})$ be complete. Let
  $ P^w \colon \mA_\spa \to \spa $ be a Lipschitz Process on
  $\mathcal{U}$ parametrized by $w \in \mathcal{W},$ and let
  $P^u \colon \mA_\spb \to \spb $ be a Lipschitz Process on
  $\mathcal{W}$ parametrized by $\mathcal{U}$. Let $C_u,C_w,$ and $C_t$ be
  constants that satisfy~\eqref{eq:6}--\eqref{eq:7}--\eqref{eq:8} for
  both processes.  Then,
  \begin{enumerate}
  \item Introducing
    $\mathcal{A}_F = \left\{ \left(\tau,t_o,(u,w)\right) \colon \tau
      \geq 0,\ t_o,t_o+\tau \in I ,\,(u,w) \in {\mathcal
        D}_{t_o}^\mathcal{U} \times {\mathcal D}_{t_o}^{\mathcal{W}}
    \right\}$, the map
    \begin{equation}
      \label{eq:36}
      \begin{array}{ccccc}
        F
        & \colon
        &    \mathcal{A}_F
        & \to
        & \spa \times \spb
        \\
        &
        & \left(\tau, t_o, (u, w) \right)
        & \mapsto
        & \left(P^w(t_o+\tau, t_o)u, P^u(t_o+\tau, t_o) w \right)
      \end{array}
    \end{equation}
    is a local flow on $\mathcal{U}\times\mathcal{W}$.
  \item $F$ satisfies the assumptions of Theorem~\ref{thm:main} with
    \begin{equation}
      \label{eqn:tangM}
      L
      =
      e^{(C_u + C_w) T}
      \quad \mbox{ and } \quad
      \omega (\tau) = C_t \, C_u \, \tau
    \end{equation}
    hence $F$ generates a unique global process
    $P \colon \mathcal{A} \to \mathcal{U} \times \mathcal{W}$, for a
    suitable
    $\mathcal{A} \subseteq I \times I \times
    \mathcal{U}\times\mathcal{W}$, satisfying properties 1.,
    2.~and~3.~in Theorem~\ref{thm:main}.

  \item For all $t_o \in I$ and $\tau \geq 0$ with
    $t_o+\tau \geq t_o$, we have
    \begin{equation}
      \label{eq:77}
      F (\tau,t_o)
      (\mathcal{D}_{t_o}^{\mathcal{U}} \times \mathcal{D}_{t_o}^{\mathcal{W}})
      \subseteq
      (\mathcal{D}_{t_o+\tau}^{\mathcal{U}} \times \mathcal{D}_{t_o+\tau}^{\mathcal{W}})
    \end{equation}
    hence the process $P$ is defined on $\mathcal{A}$ with
    \begin{equation}
      \label{eq:78}
      \mathcal{A} \supseteq
      \left\{ \left(\tau,t_o,(u,w)\right) \colon \tau
        \geq 0,\ t_o,t_o+\tau \in I ,\,(u,w) \in {\mathcal
          D}_{t_o}^\mathcal{U} \times {\mathcal D}_{t_o}^{\mathcal{W}}
      \right\} \,.
    \end{equation}
  \end{enumerate}

\end{theorem}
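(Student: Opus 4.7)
The plan is to verify the three claims in order, with the bulk of the work lying in claim~2.

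For claim~1, the identity $F(0,t_o)(u,w)=(u,w)$ is immediate from \eqref{eq:process0} applied to each component process. To obtain the Lipschitz estimate for $F$ in $\tau$ and in $(u,w)$ uniformly in $t_o$, I would split
\begin{displaymath}
  \dsa\!\left(P^w(t_o+\tau,t_o)\,u,\ P^{w'}(t_o+\tau',t_o)\,u'\right)
\end{displaymath}
via the triangle inequality into three pieces, inserting $P^w(t_o+\tau',t_o)u$ and $P^{w'}(t_o+\tau',t_o)u$, and bound the resulting terms by \eqref{eq:7}, \eqref{eq:8} and \eqref{eq:6} respectively; the $\spb$-component is treated symmetrically. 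Continuity in $t_o$, the remaining ingredient for Definition~\ref{def:local}, follows from an analogous splitting combined with \eqref{eq:process2}.

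Claim~2 is the heart of the theorem. For condition~\eqref{eq:k}, set $(u_1,w_1):=F(\tau,t_o)(u,w)$ and use the semigroup identity \eqref{eq:process2} for $P^w$ to rewrite $P^w(t_o+(k+1)\tau,t_o)u=P^w(t_o+(k+1)\tau,t_o+\tau)u_1$; the comparison on the first component then reduces to changing the \emph{parameter} $w\rightsquigarrow w_1$ inside $P^{\cdot}(t_o+(k+1)\tau,t_o+\tau)u_1$, which by \eqref{eq:8} costs at most $C_w\,k\tau\,\dsb(w_1,w)$, and \eqref{eq:7} applied to $P^u$ gives $\dsb(w_1,w)\leq C_t\,\tau$. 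Adding the symmetric bound for the $\spb$-component yields a defect bounded by $k\tau$ times a linear-in-$\tau$ factor, consistent with the $\omega$ declared in \eqref{eqn:tangM}.

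For the stability bound \eqref{eq:Stability}, I would track the two Euler polygonals step by step. Let $a_h$ denote the sum of the $\spa$- and $\spb$-distances between the two trajectories after $h$ iterations of $F(\eps,\cdot)$; using \eqref{eq:6} on the ``diagonal'' terms and \eqref{eq:8} on the ``off-diagonal'' ones, at each step
\begin{displaymath}
  a_{h+1}\ \leq\ \left(e^{C_u\eps}+C_w\eps\right)a_h\ \leq\ e^{(C_u+C_w)\eps}\,a_h
\end{displaymath}
by the elementary inequality $1+x\leq e^x$. Iterating over the $\lfloor\tau/\eps\rfloor$ full steps together with the final partial step of length $\tau-\lfloor\tau/\eps\rfloor\eps$, the exponents telescope to $(C_u+C_w)\tau\leq(C_u+C_w)T$, yielding $L=e^{(C_u+C_w)T}$. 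Theorem~\ref{thm:main} then produces the global process $P$.

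Claim~3 is then quick: the inclusion \eqref{eq:77} follows from \eqref{eq:process1} applied to each factor, which also shows that every element of $\doa_{t_o}\times\dob_{t_o}$ lies in $\mathcal{D}^3_{t_o}$ (all iterated Euler polygonals in \eqref{def:D3} stay inside the product of the domains), so \eqref{eq:78} follows from the inclusion $\mathcal{D}^3_{t_o}\subseteq\mD_{t_o}$ guaranteed by Theorem~\ref{thm:main}. The main obstacle I anticipate is the stability step: the bookkeeping must produce exactly the clean exponent $(C_u+C_w)T$ without accumulating spurious multiplicative factors at each iteration, and this hinges on absorbing the additive $C_w\eps$ into $e^{C_w\eps}$ before iterating.
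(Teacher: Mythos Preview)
Your proposal is correct and follows essentially the same route as the paper: the same triangle-inequality splitting for the local-flow Lipschitz bound, the same use of \eqref{eq:process2} followed by \eqref{eq:8} and \eqref{eq:7} to obtain the defect estimate~\eqref{eq:k}, and the same one-step contraction $(e^{C_u\eps}+C_w\eps)$ iterated and then absorbed into $e^{(C_u+C_w)\eps}$ for~\eqref{eq:Stability}. The only cosmetic differences are that the paper writes the stability argument as an explicit product rather than as a recursion $a_{h+1}\le(\cdots)a_h$, and it invokes the inequality in the form $e^a+b\le e^{a+b}$ rather than $1+x\le e^x$.
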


\noindent The proof is deferred to~\S~\ref{subsec:proofs-sect-refs}.

An analogous result can be proved defining the local flow $F$ by means
of local flows $F_w^\spa$ and $F_u^\spb$, provided these local flows
satisfy the assumptions of Theorem~\ref{thm:main} and have a Lipschitz
continuous dependence on the parameter.

\begin{theorem}
  \label{thm:Metric2}
  Consider two complete metric spaces
  $ (\mathcal{U}, d_{\mathcal{U}})$ and
  $ (\mathcal{W}, d_{\mathcal{W}}) $. Let
  \begin{displaymath}
    F^w \colon [0, \delta] \times I \times \mathcal{D}^\mathcal{U} \to \mathcal{U},
    \quad \mbox{ and } \quad
    F^u \colon [0, \delta] \times I \times \mathcal{D}^\mathcal{W} \to \mathcal{W} \,,
  \end{displaymath}
  be local flows parametrized by $w \in \mathcal{W}$ and
  $u \in \mathcal{U}$, respectively, so that there exists
  $\mathcal{L}$ such that for all $ \tau \in [0,\delta]$ and
  $t \in I$,
  \begin{displaymath}
    \begin{array}{rcl@{\qquad}r@{\,}c@{\,}l@{\qquad}r@{\,}c@{\,}l}
      \displaystyle
      d_{\mathcal{U}}\left(
      F^{w_1} (\tau,t)u,
      F^{w_2} (\tau,t)u
      \right)
      & \leq
      & \mathcal{L} \; d_{\mathcal{W}}(w_1,w_2)
      & u
      & \in
      & \mathcal{D}^{\mathcal{U}}
      & w_1,w_2
      & \in
      & \mathcal{W}
      \\
      \displaystyle
      d_{\mathcal{W}}\left(
      F^{u_1} (\tau,t)w,
      F^{u_2} (\tau,t)w
      \right)
      & \leq
      & \mathcal{L} \; d_{\mathcal{U}}(u_1,u_2)
      & u
      & \in
      & \mathcal{D}^{\mathcal{W}}
      & u_1,u_2
      & \in
      & \mathcal{U}
    \end{array}
  \end{displaymath}
  Then, setting
  $\mathcal{D} =
  \mathcal{D}^{\mathcal{U}}\times\mathcal{D}^{\mathcal{W}}$, the
  coupling
  \begin{displaymath}
    \begin{array}{ccccc}
      \hat F
      & \colon
      & [0,\delta] \times I \times \mathcal{D}
      & \to
      & \mathcal{U} \times \mathcal{W}
      \\
      &
      & \left(\tau,t,(u,w)\right)
      & \mapsto
      & \left(F^w(t, t_o)u, F^u(t, t_o)w\right)
    \end{array}
  \end{displaymath}
  is a local flow in the sense of Definition~\ref{def:local}.  If
  moreover $F^w$ and $F^u$ satisfy assumptions~\ref{it:first}
  and~\ref{it:second} in Theorem~\ref{thm:main}, then $\hat F$ is
  tangent to the local flow $F$ defined in~\eqref{eq:36} by means of
  the processes $P^w$ and $P^u$ defined through
  Theorem~\ref{thm:main}.
\end{theorem}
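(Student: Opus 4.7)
The argument splits into two assertions, and I treat them in order. For the first, I verify that $\hat F$ meets the three conditions of Definition~\ref{def:local}. The identity $\hat F(0,t)(u,w) = (u,w)$ is immediate from $F^w(0,t)u = u$ and $F^u(0,t)w = w$. For joint continuity, and for the Lipschitz bound in the first and third arguments, I estimate each coordinate via an intermediate: given $\tau,\tau' \in [0,\delta]$, $(u,w),(u',w') \in \mathcal{D}^{\mathcal{U}} \times \mathcal{D}^{\mathcal{W}}$ and $t \in I$,
\begin{displaymath}
d_{\mathcal{U}}\!\left(F^w(\tau,t)u,\, F^{w'}(\tau',t)u'\right)
\leq
d_{\mathcal{U}}\!\left(F^w(\tau,t)u,\, F^w(\tau',t)u'\right)
+
d_{\mathcal{U}}\!\left(F^w(\tau',t)u',\, F^{w'}(\tau',t)u'\right),
\end{displaymath}
bounding the first term by $\Lip(F^w)\bigl(d_{\mathcal{U}}(u,u') + |\tau-\tau'|\bigr)$ via the local flow property of $F^w$, and the second by $\mathcal{L}\, d_{\mathcal{W}}(w,w')$ by hypothesis. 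A symmetric estimate on the $\mathcal{W}$-coordinate and summation under the sum metric on $\mathcal{U}\times\mathcal{W}$ yields the Lipschitz property of $\hat F$, with constant depending on $\Lip(F^w)$, $\Lip(F^u)$ and $\mathcal{L}$.

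For the second assertion, I apply Theorem~\ref{thm:main} fibrewise. Since, by hypothesis, $F^w$ satisfies assumptions~\ref{it:first} and~\ref{it:second} of that theorem, for each fixed $w \in \mathcal{W}$ it generates a unique global process $P^w$ on $\mathcal{U}$, tangent to $F^w$ in the sense of~(\ref{eq:tangent}); likewise $P^u$ on $\mathcal{W}$. These are precisely the processes entering in~(\ref{eq:36}). The main intermediate step, and the main technical obstacle, is to check that $\{P^w\}_{w \in \mathcal{W}}$ and $\{P^u\}_{u \in \mathcal{U}}$ comply with Definition~\ref{def:LipProc}, in particular with the parameter-Lipschitz estimate~(\ref{eq:8}); only then does Theorem~\ref{thm:Metric} apply and certify that the $F$ of~(\ref{eq:36}) is indeed a local flow on $\mathcal{U}\times\mathcal{W}$. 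My plan for this lifting is a telescoping argument on Euler $\varepsilon$-polygonals: compare $F^{w_1,\varepsilon}(\tau,t_o)u$ with $F^{w_2,\varepsilon}(\tau,t_o)u$ through a chain of hybrid polygonals in which the parameter is switched from $w_2$ to $w_1$ one step at a time, each switch contributing at most $\mathcal{L}\, d_{\mathcal{W}}(w_1,w_2)$ per step by hypothesis, and being propagated through the remaining polygonal with stability factor $L$ from~(\ref{eq:Stability}); one then sums and passes to the limit $\varepsilon \to 0^+$. This is the step where care is required to obtain a bound linear in $t-t_o$ rather than one that deteriorates with $\varepsilon$.

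Granted these ingredients, the tangency of $\hat F$ to $F$ is a coordinate-wise comparison. Under the sum product metric, for $(\tau,t_o,(u,w)) \in \mathcal{A}_F$ one has
\begin{displaymath}
d\!\left(\hat F(\tau,t_o)(u,w),\, F(\tau,t_o)(u,w)\right)
=
d_{\mathcal{U}}\!\left(F^w(\tau,t_o)u,\, P^w(t_o+\tau,t_o)u\right)
+
d_{\mathcal{W}}\!\left(F^u(\tau,t_o)w,\, P^u(t_o+\tau,t_o)w\right),
\end{displaymath}
and Theorem~\ref{thm:main}, item~\ref{fgsfssald3}, applied separately to $F^w$ and $F^u$, bounds each summand by $\tau$ times an integrable $\omega$-type expression in the respective constants. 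Dividing by $\tau$ then gives a bound of the exact form~(\ref{eq:tangent}), which is the sought tangency of $\hat F$ to $F$.
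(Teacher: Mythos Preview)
Your first paragraph (verifying $\hat F$ is a local flow) and your final paragraph (the coordinate-wise tangency computation) are exactly the paper's proof. The paper's argument is just those two short steps.

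The middle paragraph is an unnecessary detour, and it is also where you flag your own difficulty. The paper does \emph{not} verify that the families $\{P^w\}$, $\{P^u\}$ satisfy Definition~\ref{def:LipProc}, and does \emph{not} invoke Theorem~\ref{thm:Metric} to certify that $F$ is a local flow. The tangency claim in Theorem~\ref{thm:Metric2} is simply the estimate~\eqref{eq:tangent} for the pair $(\hat F,F)$, and nothing about it requires $F$ to be a local flow in any verified sense: $F$ is just a map, and one bounds $\tfrac{1}{\tau}\,d(\hat F,F)$ directly. Your last paragraph already does this, via item~\ref{fgsfssald3} of Theorem~\ref{thm:main} applied separately to $F^w$ and $F^u$. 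That is the whole proof; delete the middle step.

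Your worry about the telescoping argument is well-founded, but for a step that is not needed. Note that the parameter-Lipschitz hypothesis on $F^w$ carries \emph{no} factor of $\tau$, so switching the parameter once along an $\varepsilon$-polygonal costs $\mathcal{L}\,d_{\mathcal{W}}(w_1,w_2)$, and with $k\sim\tau/\varepsilon$ switches the bound propagated through~\eqref{eq:Stability} is of order $k$, which blows up as $\varepsilon\to0$. Deriving~\eqref{eq:8} from the stated hypotheses is therefore not straightforward, and the theorem does not claim it; the remark following Theorem~\ref{thm:Metric2} makes clear that the identification with the process of Theorem~\ref{thm:Metric} is a separate matter, conditional on Theorem~\ref{thm:Metric} applying.
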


As a direct consequence of Theorem~\ref{thm:Metric2}, by means
of~\cite[Theorem~2.9]{BressanLectureNotes}, we have that whenever
Theorem~\ref{thm:Metric} applies, if $\hat F$ generates a global
process $\hat P$, then $\hat P$ coincides with the process $P$
constructed in Theorem~\ref{thm:Metric}.

\section{General Cauchy Problems}
\label{sec:part-cauchy-probl}

In the paragraphs below we consider differential equations depending
on parameters that generate parametrized Lipschitz processes in the
sense of Definition~\ref{def:LipProc}. Thus, any coupling of the
processes below meets the requirements of Theorem~\ref{thm:Metric} and
generates a new Lipschitz process. Moreover, we verify that this new
process eventually yields solutions to the coupled problem.

Throughout, $\hat I$ is a real interval containing $0$. If
$x \in \reali^n$, $\norma{x}$ denotes its Euclidean norm, while
$\norma{x}_V$ is the norm of $x$ in the Banach space $V$. The open,
respectively closed, ball centered at $x$ with radius $r$ is
$B (x,r)$, respectively $\overline{B (x,r)}$.

\subsection{Ordinary Differential Equations}
\label{subsec:ordin-diff-equat}

This brief paragraph mainly serves as a paradigm for the subsequent
ones. Indeed, we begin by considering the classical Cauchy problem
for an ordinary differential equation
\begin{equation}
  \label{eq:21}
  \left\{
    \begin{array}{l@{\qquad\qquad}l}
      \dot u = f (t,u,w)
      & t \in \hat I
      \\
      u (t_o) = u_o
    \end{array}
  \right.
  \qquad \mbox{ with } \quad
  f \colon \hat I \times \reali^n \times \mathcal{W} \to \reali^n \,,
\end{equation}
where $t_o \in \hat I$, $u_o \in \reali^n$ and the parameter $w$ is
fixed in $\mathcal{W}$.

\begin{definition}
  \label{def:ODE}
  A map $u \colon I \to \reali^n$ is a solution to~\eqref{eq:21} if
  $t_o \in I \subseteq \hat I$, $u (t_o) = u_o$, for a.e.~$t \in I$,
  $u$ is differentiable at $t$ and
  $\dot u (t) = f\left(t, u (t), w\right)$.
\end{definition}

The well posedness of~(\ref{eq:21}) is an elementary result which we
state below to allow subsequent couplings of~(\ref{eq:21}) with other
equations within the framework of Theorem~\ref{thm:Metric}.

\begin{proposition}
  \label{prop:ODE}
  Let $R>0$. Define $\mathcal{D} = \overline{B (0, R)}$ in
  $\reali^n$ and consider the Cauchy problem~\eqref{eq:21} under the
  assumptions
  \begin{enumerate}[label=\bf{(ODE\arabic*)},
    ref=\textup{\textbf{(ODE\arabic*)}}, left=0pt]
  \item\label{it:ODE1} For all $u \in \mathcal{D}$ and all
    $w \in \mathcal{W}$, the map $t \mapsto f(t, u , w )$ is
    measurable.
  \item\label{it:ODE2} There exist positive $F_L$, $F_\infty$ such
    that for all $t \in \hat I$, $u_1,u_2 \in \mathcal{D}$ and
    $w_1,w_2 \in \mathcal{W}$
    \begin{eqnarray}
      \label{eq:22}
      \norma{f (t,u_1,w_1) - f (t,u_2,w_2)}
      & \leq
      & F_L  \left(
        \norma{u_1 - u_2}
        +
        d_{\mathcal{W}}(w_1,w_2)
        \right) \,;
      \\
      \label{eq:9}
      \sup_{w \in \mathcal{W}}
      \norma{f(\cdot, \cdot, w)}_{\L\infty(\hat I\times \hat {\mathcal D}; \reali^n)}
      & \leq
      & F_\infty \,.
    \end{eqnarray}
  \end{enumerate}
  \noindent Then, there exists $T>0$, such that
  $[0,T] \subseteq \hat I$, and a Lipschitz process on $\reali^n$
  pametrized by $\mathcal{W}$ in the sense of
  Definition~\ref{def:LipProc}, whose orbits solve~\eqref{eq:21}
  according to Definition~\ref{def:ODE}, with
  \begin{equation}
    \label{eq:27}
    \begin{array}{c}
      T \leq \left. R \middle/ (2 F_\infty)\right.
      \,,\quad
      C_u = F_L
      \,,\quad
      C_t = F_\infty
      \,,\quad
      C_w = F_L \, e^{F_L T}\,,
      \\[6pt]
      \mathcal{D}_t
      =
      \overline{B \left(
      0,
      R
      - (T - t) \, \sup_{w \in \mathcal{W}}
      \norma{f(\cdot, \cdot, w)}
      _{\L\infty(\hat I \times \hat{\mathcal{D}}; \reali^n)}
      \right)} \,.
    \end{array}
  \end{equation}
\end{proposition}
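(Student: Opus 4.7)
The plan is to realize $P^w$ as the global process generated, via Theorem~\ref{thm:main}, by the natural integral local flow associated with~\eqref{eq:21}; the three bounds in Definition~\ref{def:LipProc} and the invariance of the domains $\mathcal{D}_t$ then reduce to Gronwall-type estimates, while orbits are shown to solve~\eqref{eq:21} by exploiting the tangency property~\eqref{eq:tangent}.

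Choose $T\leq R/(2F_\infty)$ and a suitable $\delta\leq T$. On $\mathcal{D} = \overline{B(0,R)}$ consider, for each fixed $w\in\mathcal{W}$, the candidate local flow
\begin{displaymath}
  F^w(\tau, t_o)u = u + \int_{t_o}^{t_o+\tau} f(s, u, w)\,\mathrm{d}s.
\end{displaymath}
This map is continuous in $(\tau, t_o, u)$ (absolute continuity of the Lebesgue integral in $t_o$, Lipschitz in $u$ and $\tau$ via~\ref{it:ODE2}) and thus qualifies as a local flow in the sense of Definition~\ref{def:local}. For hypothesis~\ref{it:first} of Theorem~\ref{thm:main}, setting $v = F^w(\tau, t_o) u$ I would compute
\begin{displaymath}
  F^w(k\tau, t_o+\tau)v - F^w\bigl((k+1)\tau, t_o\bigr) u
  = \int_{t_o+\tau}^{t_o+(k+1)\tau} \bigl( f(s,v,w) - f(s,u,w) \bigr)\,\mathrm{d}s,
\end{displaymath}
whose norm is at most $k\tau \cdot F_L\, F_\infty\, \tau$ by~\eqref{eq:22} and~\eqref{eq:9}; hence $\omega(\tau) = F_L F_\infty \tau$ works and is Dini summable. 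For hypothesis~\ref{it:second}, the one-step bound $\norma{F^w(\tau,t_o)u_1 - F^w(\tau,t_o)u_2}\leq (1+F_L\tau)\norma{u_1-u_2}$ iterates along the Euler polygonal to $L = e^{F_L T}$, uniformly in $\varepsilon$. Theorem~\ref{thm:main} then produces the global process $P^w$.

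Next I verify the three Lipschitz-process bounds. Passing $\varepsilon\to 0^+$ in the polygonal stability estimate gives~\eqref{eq:6} with $C_u = F_L$. From $\norma{F^w(\tau,t_o)u - u}\leq F_\infty\tau$ and the tangency~\eqref{eq:tangent} (note $\int_0^\tau \omega(\xi)/\xi\,\mathrm{d}\xi = F_L F_\infty \tau = O(\tau)$), bound~\eqref{eq:7} follows with $C_t = F_\infty$. For~\eqref{eq:8}, writing the difference of two orbits with parameters $w_1,w_2$ in integral form, using~\eqref{eq:22} and a scalar Gronwall inequality, one obtains $C_w = F_L\, e^{F_L T}$. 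The invariance $P^w(t,t_o)\mathcal{D}_{t_o}\subseteq\mathcal{D}_t$ with $\mathcal{D}_t$ as in~\eqref{eq:27} follows at once from $\norma{P^w(t,t_o)u_o}\leq\norma{u_o}+F_\infty(t-t_o)$ and the very choice of radii.

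The delicate point, and really the main obstacle, is showing that every orbit $u(t) := P^w(t,t_o)u_o$ is a solution of~\eqref{eq:21} in the sense of Definition~\ref{def:ODE}. Combining~\eqref{eq:tangent} with the explicit form of $F^w$ yields
\begin{displaymath}
  u(t+\tau) - u(t) = \int_{t}^{t+\tau} f(s, u(t), w)\,\mathrm{d}s + o(\tau), \quad \text{as } \tau\to 0^+,
\end{displaymath}
and the Lipschitz dependence~\ref{it:ODE2} together with~\eqref{eq:7} lets one replace $u(t)$ by $u(s)$ in the integrand at the cost of a term of order $\tau^2$. A Lebesgue-differentiation argument at points where $s\mapsto f(s,u(t),w)$ realizes its Lebesgue value $f(t,u(t),w)$ (an a.e.~property, since this function is bounded and measurable) then gives $\dot u(t) = f(t,u(t),w)$ for a.e.~$t$. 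This step, rather than any abstract issue, is the only place where the merely measurable time-dependence of $f$ requires care.
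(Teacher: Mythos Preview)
Your approach is correct, but it is worth noting that the paper does not actually prove this proposition: immediately before the statement the authors write that the well posedness of~\eqref{eq:21} ``is an elementary result'', and no proof appears in Section~\ref{sec:technical-details}. The intended argument is therefore the classical one: Carath\'eodory existence and uniqueness on $\mathcal{D}$ (via Picard iteration on the integral equation $u(t)=u_o+\int_{t_o}^t f(s,u(s),w)\,\d s$), after which~\eqref{eq:6} and~\eqref{eq:8} are straightforward Gronwall estimates on the integral formulation, \eqref{eq:7} is immediate from~\eqref{eq:9}, and the invariance of $\mathcal{D}_t$ follows from $\norma{u(t)}\le\norma{u_o}+F_\infty(t-t_o)$.

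Your route---building $P^w$ from the Euler local flow via Theorem~\ref{thm:main} and then recovering the ODE through the tangency estimate---is a genuinely different and self-contained alternative that stays entirely within the paper's abstract framework. It has the advantage of illustrating that even the ODE case fits the local-flow machinery, at the cost of being heavier than the direct argument. Two small points deserve attention in a full write-up. First, your verification of~\eqref{eq:8} (``writing the difference of two orbits in integral form'') presupposes that orbits satisfy the integral equation, so logically the tangency step establishing $\dot u=f(t,u,w)$ should precede it. Second, to ensure the Euler polygonals remain in $\mathcal{D}$ (so that Theorem~\ref{thm:main} applies), one should restrict the initial data to $\mathcal{D}_{t_o}$ and observe that a single step sends $\mathcal{D}_{t_o}$ into $\mathcal{D}_{t_o+\varepsilon}$, whence the polygonal never leaves $\mathcal{D}$; this is implicit in your choice $T\le R/(2F_\infty)$ but not spelled out.
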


\noindent Long time existence is also available.

\begin{corollary}
  \label{cor:global-existence-ODE}
  Assume $\sup \hat I = +\infty$ and that, for every $R > 0$,
  \ref{it:ODE1} and \ref{it:ODE2} hold with $F_\infty = F_\infty(R)$
  satisfying
  \begin{equation*}
    \limsup_{R \to + \infty}\, \frac{F_\infty(R)}{R \ln(R)} < + \infty.
  \end{equation*}
  Then, for all $t_o \in \hat I$, the solution to~\eqref{eq:21} exists
  for every $t \geq t_o$.
\end{corollary}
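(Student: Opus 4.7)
The plan is to argue by contradiction, extending local solutions given by Proposition~\ref{prop:ODE} as long as they remain bounded, and ruling out finite-time blow-up via an a priori bound. Fix $t_o \in \hat I$ and $u_o \in \reali^n$, and let $u$ be a solution of~\eqref{eq:21} on a maximal existence interval $[t_o, T_{\max})$. Applying Proposition~\ref{prop:ODE} at an initial time $t_*$ to the ball of radius $R = 2 \|u(t_*)\|$ yields a local extension of length at least $\tau = \|u(t_*)\| / \bigl(2 F_\infty(2\|u(t_*)\|)\bigr)$; iterating, if $T_{\max} < +\infty$ then necessarily $\|u(t)\|$ must become unbounded as $t \uparrow T_{\max}$, since otherwise the time-steps would not collapse to zero and one could extend past $T_{\max}$.

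To rule out blow-up, I would harmlessly replace $F_\infty$ by its non-decreasing envelope $\tilde F_\infty(R) = \sup_{r \le R} F_\infty(r)$, which still satisfies the $R \ln R$ growth bound. Setting $\rho(t) = \sup_{t_o \le s \le t} \|u(s)\|$, integrating~\eqref{eq:21} and taking the supremum on the left yields
\begin{equation*}
  \rho(t) \le \|u_o\| + \int_{t_o}^{t} F_\infty\bigl(\rho(s)\bigr) \d{s} \,.
\end{equation*}
A standard scalar comparison then gives $\rho(t) \le \psi(t)$, where $\psi$ solves the autonomous scalar Cauchy problem $\dot\psi = F_\infty(\psi)$, $\psi(t_o) = \|u_o\|$.

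The final step is to show $\psi$ is globally defined. By the growth assumption there exist $R_* > e$ and $K > 0$ with $F_\infty(R) \le K R \ln R$ for all $R \ge R_*$. Whenever $\psi(t) \ge R_*$, this yields $\tfrac{\d}{\d{t}} \ln\ln \psi(t) = \dot\psi / (\psi \ln \psi) \le K$, producing a double-exponential bound on $\psi$ that is finite on every bounded interval. Hence $\rho$ is bounded on every compact subinterval of $[t_o, T_{\max})$, contradicting blow-up and forcing $T_{\max} = +\infty$. The main subtlety I expect is making the continuation step fully precise within the process framework of Proposition~\ref{prop:ODE}: one has to verify that consecutive local processes can be concatenated into a single solution of~\eqref{eq:21} in the sense of Definition~\ref{def:ODE} on the whole interval where the a priori bound holds, with the radii $R$ and horizons $T$ prescribed by Proposition~\ref{prop:ODE} growing compatibly along the iteration.
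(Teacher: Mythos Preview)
Your proof is correct but takes a different route from the paper's. You argue by a priori bound: replacing $F_\infty$ by its monotone envelope, you set up the Bihari-type inequality $\rho(t) \le \|u_o\| + \int_{t_o}^t F_\infty(\rho(s))\,\d{s}$, compare with the scalar ODE $\dot\psi = F_\infty(\psi)$, and show $\psi$ is globally defined because $\int^\infty \frac{\d\psi}{\psi\ln\psi} = +\infty$ (the Osgood condition). This yields an explicit double-exponential bound on $\|u\|$ on any finite interval, after which a standard blow-up/continuation argument finishes.

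The paper instead gives a direct constructive iteration, with no a priori bound and no comparison lemma. It applies Proposition~\ref{prop:ODE} successively with doubling radii $R_k = 2^k$: at each step the initial datum $u_{k-1}$ lies in $\overline{B(0,R_{k-1})} = \overline{B(0,R_k/2)} = \mathcal{D}_0$, so the process runs for time $T_k = R_k/(2F_\infty(R_k))$ and lands in $\overline{B(0,R_k)}$. Concatenating these pieces gives a solution on $[t_o,\,t_o+\sum_k T_k)$, and the growth hypothesis reduces $\sum_k T_k \ge \O \sum_k 1/k = +\infty$ to the harmonic series. Your approach gives more quantitative information (an explicit growth bound), while the paper's is shorter, stays entirely within the process machinery already set up, and sidesteps the maximal-interval formalism and the monotone-envelope/Bihari step you need.
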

\noindent The proof is deferred to \S~\ref{sub:ODE}.
We now verify that Theorem~\ref{thm:Metric} applies
to the coupling of~\eqref{eq:21} with other Lipschitz Processes.

\begin{proposition}
  \label{prop:ODEcoupled}
  Set $\mathcal{U} = \reali^n$. Assume
  that~\ref{it:ODE1}--\ref{it:ODE2} hold. Let $P^u$ be a Lipschitz
  Process on $\mathcal{W}$ parametrized by $u \in \mathcal{U}$. Call
  $P\colon \mathcal{A} \to \reali^n \times \mathcal{W}$, with
  $P \equiv (P_1,P_2)$, the Process constructed in
  Theorem~\ref{thm:Metric} coupling $P^w$, generated by~\eqref{eq:21},
  and $P^u$. If $([t_o,T],t_o,u_o,w_o) \subseteq \mathcal{A}$, then
  \begin{displaymath}
    \begin{array}{@{}c@{\,}c@{\,}c@{\,}c@{\,}c@{}}
      u
      & \colon
      & [t_o,T]
      & \to
      & \reali^n
      \\
      &
      &t
      & \mapsto
      & P_1 (t,t_o) (u_o , w_o)
    \end{array}
    \mbox{ solves }
    \left\{
      \begin{array}{l}
        \dot u
        =
        \bar f (t,u)
        \\
        u (t_o) = u_o
      \end{array}
    \right.
    \mbox{ where }
    \bar f (t,u) = f\left(t, u, P_2 (t,t_o) (u_o,w_o)\right)
  \end{displaymath}
  in the sense of Definition~\ref{def:ODE}.
\end{proposition}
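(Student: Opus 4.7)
Write $w(t) := P_2(t,t_o)(u_o,w_o)$, so that $\bar f(t,u) = f(t,u,w(t))$. Since $u(t_o)=u_o$ is immediate from \eqref{eq:process0}, I only need to produce the integrated form of the Cauchy problem,
\begin{equation*}
  u(t) = u_o + \int_{t_o}^t f\bigl(s, u(s), w(s)\bigr) \, \mathrm{d} s
  \qquad \mbox{for every } t \in [t_o, T],
\end{equation*}
after which Lebesgue differentiation (legitimate because $u$ is Lipschitz by \eqref{eq:7} and the integrand is bounded by $F_\infty$) yields $\dot u(t) = \bar f(t, u(t))$ \emph{a.e.}, matching Definition~\ref{def:ODE}.

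\textbf{Discretization and tangency.} Fix $t \in [t_o, T]$, $N \in \naturali$, $\tau = (t-t_o)/N$ and $t_k = t_o + k\tau$. By \eqref{eq:process1}--\eqref{eq:process2} together with property~3 of Theorem~\ref{thm:Metric}, the point $(u(t_k), w(t_k))$ lies in $\mathcal{D}^{\mathcal{U}}_{t_k} \times \mathcal{D}^{\mathcal{W}}_{t_k}$, so every step below is well defined. Telescoping gives
\begin{equation*}
  u(t) - u_o
  =
  \sum_{k=0}^{N-1}
  \Bigl[P_1(t_{k+1}, t_k)\bigl(u(t_k), w(t_k)\bigr) - u(t_k)\Bigr].
\end{equation*}
Applying the tangency bound \eqref{eq:tangent} of $P$ to the local flow $F$ of \eqref{eq:36}, with $\omega(\xi) = C_t C_u \xi$ as in \eqref{eqn:tangM} so that $\int_0^\tau \omega(\xi)/\xi \, \mathrm{d}\xi = C_t C_u \tau$, I can replace $P_1(t_{k+1},t_k)(u(t_k),w(t_k))$ by the frozen-parameter quantity $P^{w(t_k)}(t_{k+1}, t_k) u(t_k)$ at a cost of at most $\frac{2L C_t C_u}{\ln 2}\tau^2$ per step.

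\textbf{From the frozen-parameter flow to the integral of $f$.} Since $P^{w(t_k)}$ is generated by \eqref{eq:21} with parameter $w(t_k)$, Definition~\ref{def:ODE} yields
\begin{equation*}
  P^{w(t_k)}(t_{k+1}, t_k)\, u(t_k) - u(t_k)
  =
  \int_{t_k}^{t_{k+1}} f\bigl(s, v_k(s), w(t_k)\bigr) \, \mathrm{d} s,
  \qquad v_k(s) := P^{w(t_k)}(s, t_k) u(t_k).
\end{equation*}
On $[t_k, t_{k+1}]$ one has $\norma{v_k(s) - u(s)} \leq (F_\infty + C_t)(s - t_k)$ (from \eqref{eq:9} and \eqref{eq:7}) and $d_{\mathcal{W}}(w(t_k), w(s)) \leq C_t(s - t_k)$ (from \eqref{eq:7} applied to $P_2$). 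The Lipschitz bound \eqref{eq:22} then allows substitution of $v_k(s)$ by $u(s)$ and of $w(t_k)$ by $w(s)$ inside the integral, with an extra error of order $\tau^2$ per step. Summing the $N$ contributions gives
\begin{equation*}
  u(t) - u_o = \int_{t_o}^t f\bigl(s, u(s), w(s)\bigr)\, \mathrm{d} s + O(\tau),
\end{equation*}
and sending $N \to \infty$ produces the integral identity.

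\textbf{Main obstacle.} The heart of the argument is the uniform $O(\tau^2)$ per-step control of two distinct errors: the abstract defect between $P$ and $F$ (whose quadratic size hinges on the choice $\omega(\tau) = C_t C_u \tau$ in \eqref{eqn:tangM}), and the error from freezing $(u,w)$ inside the integral of $f$ (whose quadratic size hinges on the Lipschitz bound \eqref{eq:22} for $f$ and the Lipschitz-in-time estimate \eqref{eq:7} for $P_2$). Both must be quadratic to survive summation over $N = (t-t_o)/\tau$ steps; verifying this bookkeeping cleanly, while keeping track that all intermediate states remain in the relevant domains, is the only delicate point.
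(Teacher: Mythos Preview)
Your argument is correct, but it is organized differently from the paper's. The paper works pointwise: it fixes a Lebesgue point $t$ of $s\mapsto f\bigl(s,P(s,t_o)(u_o,w_o)\bigr)$ and shows directly that the difference quotient $\bigl(P_1(t+h,t)(u,w)-u\bigr)/h$ converges to $f(t,u,w)$, splitting the error into the tangency defect $P_1$ vs.\ $F_1$ (your $O(\tau^2)$ per-step term) and the defect inside the integral representation of $F_1(h,t)(u,w)-u$ (your freezing error), and concludes via the Lebesgue-point property. You instead establish the integrated identity $u(t)=u_o+\int_{t_o}^t f(s,u(s),w(s))\,\d s$ by discretizing, telescoping, and summing the same two quadratic errors over $N$ steps. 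Both routes use exactly the same ingredients (tangency~\eqref{eq:tangent} with the explicit $\omega$ from~\eqref{eqn:tangM}, the integral form of the frozen-parameter ODE, and the Lipschitz bound~\eqref{eq:22}); your version avoids the Lebesgue-point bookkeeping at the cost of the discretization bookkeeping, while the paper's version avoids summation at the cost of handling the $R_{2,3}$ term via Lebesgue differentiation. One small remark: the Lipschitz-in-time constant for $t\mapsto P_2(t,t_o)(u_o,w_o)$ is the Lipschitz constant of the coupled process $P$ from Theorem~\ref{thm:main}, not literally the $C_t$ of~\eqref{eq:7} for the individual processes; this does not affect your argument since any fixed constant suffices.
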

\noindent The proof is deferred to \S~\ref{sub:ODE}.

A particular case of Proposition~\ref{prop:ODE} of interest is the
following.

\begin{corollary}
  \label{cor:ODE}
  Let $R>0$. Define $\hat{\mathcal{D}} = \overline{B (0, R)}$
  in $\mathcal{U} = \reali^n$. Choose
  $\mathcal{W} = \L1 (\reali^N;\reali^M)$ and fix
  $\eta \in \L\infty (\hat I \times \reali^N;\reali)$. Consider the
  Cauchy problem~\eqref{eq:21} with
  \begin{equation}
    \label{eq:11}
    f(t, u, w)
    =
    g\left(t, u, \int_{\reali^N} \eta (t,x) \; w (x) \d{x}\right)
  \end{equation}
  under the assumptions:
  \begin{enumerate}[label=\bf{(NL\arabic*)},
    ref=\textup{\textbf{(NL\arabic*)}}, left=0pt]
  \item \label{ip:(g)} For all $u \in \hat{\mathcal{D}}$ and
    $W \in \reali^M$, the map $t \mapsto g (t,u,W)$ is measurable.

  \item \label{ip:(g2)} There exist positive $L_g$ and $G_\infty$ such
    that for all $t \in \hat I$, $u_1,u_2 \in \hat{\mathcal{D}}$ and
    $W_1,W_2 \in \reali^M$
    \begin{eqnarray*}
      \norma{g (t,u_1,W_1) - g (t,u_2,W_2)}
      & \leq
      & G_L \left(\norma{u_1 - u_2} + \norma{W_1 - W_2}\right) \,;
      \\
      \sup_{\hat I \times \hat{\mathcal{D}} \times \reali^M} \norma{g (t,u,W)}
      & \leq
      & G_\infty \,.
    \end{eqnarray*}
  \end{enumerate}

  \noindent Then, given the interval $I=[0, T]$ with
  $T=\frac{R}{2 G_\infty}$ and, for every $t \in I$, the domain
  \begin{equation}
    \label{eq:dom-t-ODE}
    \mathcal{D}_t
    =
    \overline{B \left(0, R - (T - t) \norma{g}
        _{\L\infty(\hat I \times \hat{\mathcal{D}} \times \reali^M; \reali^n)}
      \right)} \,,
  \end{equation}
  problem~\eqref{eq:21}--\eqref{eq:11} generates a Lipschitz Process
  on $\reali^n$ pametrized by $w \in \mathcal{W}$, with constants
  in~\eqref{eq:6}--\eqref{eq:7}--\eqref{eq:8} given by
  \begin{equation}
    \label{eq:28}
    \begin{array}{c}
      C_u =
      G_L (1+\norma{\eta}_{\L\infty (\hat I\times\reali^N;\reali)})
      \,,\quad
      C_t = G_\infty
      \\
      C_w
      =
      G_L
      (1+\norma{\eta}_{\L\infty (\hat I\times\reali^N;\reali)})
      \exp \left(
      G_L
      (1+\norma{\eta}_{\L\infty (\hat I\times\reali^N;\reali)})
      \, \hat T
      \right).
    \end{array}
  \end{equation}
\end{corollary}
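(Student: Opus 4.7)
The plan is to deduce Corollary~\ref{cor:ODE} as a direct specialization of Proposition~\ref{prop:ODE} to the nonlocal dependence on $w$ encoded in~\eqref{eq:11}. The work reduces to verifying that the particular $f$ of~\eqref{eq:11} satisfies hypotheses~\ref{it:ODE1} and~\ref{it:ODE2} with explicit Lipschitz constant and uniform bound, and then reading off the constants in~\eqref{eq:28} and the domains~\eqref{eq:dom-t-ODE} from~\eqref{eq:27}.

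The first step is to check measurability~\ref{it:ODE1}. Fix $u \in \hat{\mathcal{D}}$ and $w \in \mathcal{W}$. Since $\eta \in \L\infty(\hat I \times \reali^N; \reali)$ is jointly measurable and $w \in \L1(\reali^N; \reali^M)$, Fubini's theorem yields measurability of
$$W \colon t \mapsto \int_{\reali^N} \eta(t,x)\, w(x) \d{x}.$$
Composing with $g(\cdot, u, \cdot)$, which is measurable in its first argument by~\ref{ip:(g)}, gives measurability of $t \mapsto f(t, u, w) = g(t, u, W(t))$.

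The second step addresses~\ref{it:ODE2}. The elementary bound
$$\left\| \int_{\reali^N} \eta(t,x) \, \left(w_1(x) - w_2(x)\right) \d{x} \right\| \leq \norma{\eta}_{\L\infty(\hat I \times \reali^N; \reali)} \, \norma{w_1 - w_2}_{\L1},$$
coupled with the Lipschitz estimate in~\ref{ip:(g2)}, gives
$$\norma{f(t, u_1, w_1) - f(t, u_2, w_2)} \leq G_L \bigl(1 + \norma{\eta}_{\L\infty(\hat I \times \reali^N; \reali)}\bigr) \bigl(\norma{u_1 - u_2} + \norma{w_1 - w_2}_{\L1}\bigr),$$
so~\eqref{eq:22} holds with $F_L = G_L\,(1 + \norma{\eta}_{\L\infty(\hat I \times \reali^N; \reali)})$. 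The uniform bound on $g$ in~\ref{ip:(g2)} transfers unchanged to $f$, giving~\eqref{eq:9} with $F_\infty = G_\infty$.

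The final step is the invocation of Proposition~\ref{prop:ODE}: applied with these values of $F_L$ and $F_\infty$, it produces the time horizon $T = R/(2 G_\infty)$, the family of domains~\eqref{eq:dom-t-ODE}, and the constants $C_u = F_L$, $C_t = F_\infty$, $C_w = F_L\, e^{F_L T}$, matching~\eqref{eq:28}. I expect no genuine obstacle: the only points requiring minor care are the joint-measurability argument underpinning the integral $W(t)$ and the bookkeeping of the factor $(1 + \norma{\eta}_{\L\infty})$, which appears because $w$ enters $f$ only through the $\eta$-weighted integral rather than pointwise.
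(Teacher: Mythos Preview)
Your proposal is correct and matches the paper's approach exactly: the paper states that the proof ``is a direct consequence of Proposition~\ref{prop:ODE} and is hence omitted,'' and you have supplied precisely that verification---checking~\ref{it:ODE1} and~\ref{it:ODE2} for the composite $f$ in~\eqref{eq:11} and reading off $F_L = G_L(1+\norma{\eta}_{\L\infty})$, $F_\infty = G_\infty$ from~\eqref{eq:27}.
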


\noindent The proof is a direct consequence of
Proposition~\ref{prop:ODE} and is hence omitted. Note that also
Proposition~\ref{prop:ODEcoupled} is immediately extended to the case
of~\eqref{eq:11}.  The analog of \Cref{cor:global-existence-ODE} in
this setting is given by the following result, whose proof is omitted,
since it is identical to that of \Cref{cor:global-existence-ODE}.
\begin{corollary}
  \label{cor:global-existence-ODE-2}
  Assume $[0, +\infty) \subseteq \hat I$ and that, for every $R > 0$,
  \ref{ip:(g)} and \ref{ip:(g2)} hold with $G_\infty = G_\infty(R)$
  satisfying
  \begin{equation*}
    \limsup_{R \to + \infty}\, \frac{G_\infty(R)}{R \ln(R)} < + \infty.
  \end{equation*}
  Then the solution to~\eqref{eq:21}, with vector field~\eqref{eq:11},
  exists for every $t \geq t_o$.
\end{corollary}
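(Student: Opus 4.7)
The plan is to imitate the argument of Corollary~\ref{cor:global-existence-ODE}: the right hand side of~\eqref{eq:21} with $f$ as in~\eqref{eq:11} is uniformly bounded by $G_\infty(R)$ whenever $\norma{u} \leq R$, \emph{independently} of the nonlocal quantity $\int_{\reali^N}\eta(t,x)\,w(x)\,\dd{x}$. Consequently, the nonlocal structure is invisible to the a priori estimate needed to rule out finite time blow-up, and the proof effectively reduces to the ordinary differential case already handled in Corollary~\ref{cor:global-existence-ODE}.

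Fix $t_o \geq 0$, a parameter $w \in \mathcal{W}$, and an initial datum $u_o \in \reali^n$ with $\norma{u_o} \leq R_o$. For every $R \geq 2 R_o$, Corollary~\ref{cor:ODE} applied with ball radius $R$ yields a Lipschitz process whose orbit solves~\eqref{eq:21}--\eqref{eq:11} in the sense of Definition~\ref{def:ODE} on $[t_o, t_o + R/(2G_\infty(R))]$, as long as it remains in $\overline{B(0,R)}$. Concatenating these local solutions via the semigroup identity~\eqref{eq:process2} defines a unique solution $u$ on a maximal existence interval $[t_o, T^*)$. The task is then to prove $T^* = +\infty$.

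Suppose, towards a contradiction, $T^* < +\infty$. Setting $R(t) := \norma{u(t)}$, the Lipschitz continuity of $u$ and the bound $\norma{\dot u(t)} \leq G_\infty(R(t))$ yield the a.e.~inequality $\dot R(t) \leq G_\infty(R(t))$. The growth hypothesis furnishes $C, R_1 > 0$ with $G_\infty(R) \leq C\,R\,\ln R$ for $R \geq R_1$, so whenever $R(t) \geq R_1$ the function $\ln\ln R(t)$ is absolutely continuous with derivative bounded by $C$, giving the double exponential bound $R(t) \leq \exp\exp\bigl(\ln\ln R(t_o) + C(t-t_o)\bigr)$ on $[t_o, T^*)$. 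Hence $R$ stays bounded by some $R_* < +\infty$, and reapplying Corollary~\ref{cor:ODE} with radius $2R_*$ at a time sufficiently close to $T^*$ extends $u$ strictly beyond $T^*$, contradicting maximality. The only non-trivial point is the a.e.~inequality for $R$, which follows from $\modulo{R(t) - R(s)} \leq \norma{u(t) - u(s)} \leq \int_s^t G_\infty(R(\sigma))\,\dd{\sigma}$; everything else is the bookkeeping implicit in the statement of the corollary.
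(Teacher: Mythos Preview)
Your argument is correct, but it follows a different route from the paper. The paper states that the proof is identical to that of Corollary~\ref{cor:global-existence-ODE}, which is a \emph{constructive} concatenation: one fixes the dyadic radii $R_k = 2^k$, applies Corollary~\ref{cor:ODE} on each ball to obtain an existence interval of length $T_k = R_k/(2G_\infty(R_k))$, glues the pieces together, and then uses the growth hypothesis to show $\sum_k T_k \geq \O \sum_k 1/k = +\infty$. Your approach is instead the classical \emph{blow-up alternative}: assume a finite maximal time $T^*$, derive the scalar differential inequality $\dot R \leq G_\infty(R) \leq C\,R\ln R$, integrate it (Bihari-type) to get a double-exponential bound on $\norma{u}$, and contradict maximality. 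Both are standard and both hinge on exactly the same observation you emphasize, namely that the nonlocal term is invisible to the $\L\infty$ bound. The paper's version has the advantage of staying entirely inside the process framework of Proposition~\ref{prop:ODE}/Corollary~\ref{cor:ODE} without invoking maximal Carath\'eodory solutions; yours has the advantage of being a single Gronwall-style estimate. One small point worth tidying in your write-up: for $\norma{\dot u(t)} \leq G_\infty(R(t))$ to hold pointwise you implicitly need $G_\infty$ nondecreasing in $R$, which you may assume without loss of generality since $G_\infty(R)$ bounds a supremum over $\overline{B(0,R)}$; and the double-exponential bound as written requires $R(t_o)>e$, which is handled by replacing $R(t_o)$ with $\max\{R(t_o),R_1,e^2\}$.
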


\subsection{The Initial Value Problem for a Renewal Equation}
\label{subsec:IVP}

We examine the following initial value problem for a first order
partial differential equation
\begin{equation}
  \label{eq:REN}
  \left\{
    \begin{array}{l@{\qquad}r@{\,}c@{\,}l}
      \partial_t u + \div_x \left( v(t, x, w) \, u\right)
      = m(t, x, w)u + q(t, x, w)
      & (t,x)
      & \in
      & \hat I \times \reali^n,
      \\
      u (t_o, x) = u_o(x),
      &x
      & \in
      & \reali^n
    \end{array}
  \right.
\end{equation}
for $ u_o \in \L1 (\reali^n; \reali)$ and $t_o \in \hat I$. Proofs are
deferred until \S~\ref{subsubsec:proofs-related-s-IP}.

\begin{definition}
  \label{def:RENSol}
  For a fixed $w \in \mathcal{W}$, a function
  $u \in \C0\left([t_o, T]; \L1(\reali^n; \reali)\right)$, where
  $[t_o,T] \subseteq \hat I$, is a solution to~\eqref{eq:REN} if:
  \begin{enumerate}
  \item for any test function
    $\phi \in \Cc\infty (\mathopen]t_o, T\mathclose[ \times \reali^n;
    \reali)$,
    \begin{eqnarray*}
      \int_{t_o}^{T} \int_{\reali^n}
      \bigl(
      u (t,x) \, \partial_t\phi (t,x)
      +
      u (t,x)\, v (t, x, w) \cdot \nabla_x \phi (t,x)
      \qquad\qquad
      \\
      \qquad\qquad
      +
      \left(m (t,x,w)\, u (t,x) + q (t,x,w) \right)
      \phi (t,x)
      \bigr)\,
      \d{x}  \, \d{t} = 0;
    \end{eqnarray*}
  \item $u(t_o,x) = u_o(x)$ for a.e. $x \in \reali^n$.
  \end{enumerate}
\end{definition}

\begin{proposition}
  \label{prop:RE}
  Let $R>0$ and set $\mathcal{U} = \L1 (\reali^n;
  \reali)$. Define
  \begin{displaymath}
    \mathcal{D} =
    \left\{
      u \in \L1 (\reali^n; \reali)
      \colon
      \max
      \left\{
        \norma{u}_{\L1(\reali^n; \reali)},
        \norma{u}_{\L\infty(\reali^n; \reali)},
        \tv(u)
      \right\} \leq R \right\} \,.
  \end{displaymath}
  Consider the Cauchy problem~\eqref{eq:REN} under the assumptions
  \begin{enumerate}[label=\bf{(IP\arabic*)},
    ref=\textup{\textbf{(IP\arabic*)}}, left=0pt]
  \item \label{ip:(V)} For all $ w \in \mathcal{W} $,
    $v(\cdot, \cdot, w) \in \C0 (\hat I\times\reali^n;\reali^n)$,
    $v(t,\cdot, w) \in \C2(\reali^n;\reali^n)$ for all $t \in \hat I$
    and there exist positive constants $V_1$, $V_L$, $V_\infty$ such
    that for all $t \in \hat I$
    \begin{displaymath}
      \begin{array}{c}
        \norma{v(t, \cdot, w)}_{\L\infty(\reali^n;\reali^n)}
        \leq
        V_\infty \,;
        \qquad
        \norma{\nabla v(t, \cdot, w)}_{\L\infty(\reali^n;\reali^{n\times n})}
        \leq
        V_L \,;
        \\[6pt]
        \norma{\nabla \nabla \cdot v (t, \cdot, w)}_{\L1 (\reali^n; \reali^n)}
        \leq
        V_1 \,.
      \end{array}
    \end{displaymath}
    and, for all $w_1, w_2 \in \mathcal{W}$ and $t\in \hat{I}$,
    \begin{align*}
      \norma{v(t, \cdot, w_1) - v(t, \cdot, w_2)}_{\L\infty(\reali^n;\reali^n)}
      & \leq
        V_L \, d_{\mathcal{W}}(w_1, w_2),
      \\
      \norma{\nabla \cdot \left(v(t, \cdot, w_1) - v(t, \cdot, w_2)\right)}_{\L1(\reali^n;\reali)}
      &\leq
        V_L \, d_{\mathcal{W}}(w_1, w_2).
    \end{align*}

  \item \label{ip:(P)} For all $w \in \mathcal{W}$,
    $m (\cdot, \cdot,w) \in \C0 (\hat I \times \reali^n; \reali)$ and
    there exist positive constants $M_\infty$, $M_L$ such that for all
    $t \in \hat I$ and for all $w, w_1, w_2 \in \mathcal{W}$
    \begin{eqnarray*}
      \norma{m(t, \cdot, w)}_{\L\infty(\reali^n;\reali)}
      +
      \tv\left( m(t, \cdot, w) \right)
      & \leq
      & M_\infty \,;
      \\
      \norma{m(t, \cdot, w_1) - m(t, \cdot, w_2)}_{\L1(\reali^n;\reali)}
      & \leq
      & M_L \; d_{\mathcal{W}}(w_1, w_2) \,.
    \end{eqnarray*}

  \item \label{ip:(Q)} For all $w \in \mathcal{W}$,
    $q (\cdot,\cdot,w) \in \L1 \left(\hat I; \L\infty (\reali^n;
      \reali)\right)$ and there exist positive constants $Q_\infty$,
    $Q_1$, $Q_L$ such that for all $t \in \hat I$ and for all
    $w,w_1,w_2 \in \mathcal{W}$,
    \begin{eqnarray*}
      \norma{q(t, \cdot, w)}_{\L\infty(\reali^n;\reali)}
      +
      \tv\left(q(t,\cdot, w)\right)
      & \leq
      & Q_\infty \,;
      \\
      \norma{q(t, \cdot, w)}_{\L1(\reali^n; \reali)}
      & \leq
      & Q_1,
      \\
      \norma{q(t, \cdot, w) - q(t, \cdot, w_2)}_{\L1(\reali^n;\reali)}
      & \leq
      & Q_L \; d(w_1, w_2) \,.
    \end{eqnarray*}
  \end{enumerate}
  Then, there exists $T>0$, such that $[0,T] \subseteq \hat I$, and a
  Lipschitz process on $\mathcal{U}$ pametrized by $\mathcal{W}$ in
  the sense of Definition~\ref{def:LipProc}, whose orbits
  solve~\eqref{eq:REN} in the sense of Definition~\ref{def:RENSol},
  with
  \begin{equation}
    \label{eq:29}
    \begin{array}{c}
      C_u = M_\infty \,,\quad
      C_t =     V_\infty \, R \, e^{(M_\infty+2V_L)  T}
      + Q_1 \, e^{M_\infty  T}
      + (M_\infty+V_L) \, R \, e^{(M_\infty+V_L)  T}\,,
      \\
      C_w
      =
      \left[
      V_L
      (2R + Q_\infty)
      (1 + (V_1 + M_\infty){T})
      + (
      Q_L
      + (M_L + V_L) (R + Q_\infty {T})
      )
      \right]
      e^{(M_\infty + V_L)  T} \,,
      \\
      \mathcal{D}_t =
      \left\{
      u \in \mathcal{D}
      \colon
      \begin{array}{rcl}
	\norma{u}_{\L1(\reali^n; \reali)}
        & \leq
        & \alpha_1(t)
        \\
        \norma{u}_{\L\infty(\reali^n; \reali)}
        & \leq
        & \alpha_\infty(t)
        \\
        \tv(u)
        & \leq
        & \alpha_{\tv}(t)
      \end{array}
          \right\} \,,
    \end{array}
  \end{equation}
  where
  \begin{equation}
    \label{eq:65}
    \begin{array}{rcl}
      \alpha_1(t)
      & =
      & Re^{-M_\infty (T - t)} - Q_1(T- t)e^{M_\infty t} \,,
      \\
      \alpha_\infty(t)
      & =
      & R e^{-(M_\infty+V_L)(T-t)} - Q_\infty e^{(M_\infty+V_L)t} (T - t) \,,
      \\
      \alpha_{\tv}(t)
      & =
      & R e^{-(M_\infty+V_L)(T-t)} \left(1 - (M_\infty + V_1)(T - t)\right)
      \\
      &
      & - Q_\infty e^{(M_\infty+V_L)t}\left(1 + (M_\infty+V_1)t\right)(T- t) \,.
    \end{array}
  \end{equation}
\end{proposition}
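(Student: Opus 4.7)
Fix $w\in\mathcal{W}$. The strategy is first to construct the process $P^w$ by the method of characteristics for $v(\cdot,\cdot,w)$, then to verify that the time-dependent domain $\mathcal{D}_t$ in~\eqref{eq:29}--\eqref{eq:65} is positively invariant, and finally to check the three Lipschitz bounds~\eqref{eq:6}--\eqref{eq:7}--\eqref{eq:8} of Definition~\ref{def:LipProc}.

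\textbf{Construction.} By~\ref{ip:(V)}, the ODE $\dot X = v(t,X,w)$ generates a Lipschitz flow on $\hat I\times\reali^n$. Rewriting~\eqref{eq:REN} in non-conservative form $\partial_t u + v\cdot\nabla u = (m-\diver v)\,u + q$ and integrating along characteristics produces an explicit Duhamel representation for $u(t,y)$ in terms of $u_o$, $m$, $\diver v$ and $q$ evaluated along the characteristic ending at $y$ at time~$t$. A direct distributional computation shows that this $u$ solves~\eqref{eq:REN} in the sense of Definition~\ref{def:RENSol}; denote the resulting solution operator by $P^w(t,t_o)$.

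\textbf{A priori bounds and invariance of $\mathcal{D}_t$.} Testing~\eqref{eq:REN} against $\sgn(u)$ and $1$, and computing $\tv(u(t))$ from the representation formula, yield the differential inequalities
\begin{align*}
  \frac{\d{}}{\d{t}} \norma{u}_{\L1(\reali^n;\reali)}
  & \leq M_\infty \, \norma{u}_{\L1(\reali^n;\reali)} + Q_1 \,,
  \\
  \frac{\d{}}{\d{t}} \norma{u}_{\L\infty(\reali^n;\reali)}
  & \leq (M_\infty + V_L)\, \norma{u}_{\L\infty(\reali^n;\reali)} + Q_\infty \,,
  \\
  \frac{\d{}}{\d{t}}\,\tv(u)
  & \leq (M_\infty + V_L)\,\tv(u) + (M_\infty + V_1)\,\norma{u}_{\L\infty(\reali^n;\reali)} + Q_\infty \,,
\end{align*}
where the last line uses the $\L1$ bound on $\nabla\diver v$ from~\ref{ip:(V)} and the $\tv$ bound on $m$ from~\ref{ip:(P)}. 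Gronwall's lemma, read backwards from $t=T$ with the terminal value $R$, produces exactly the profiles $\alpha_1,\alpha_\infty,\alpha_{\tv}$ in~\eqref{eq:65}. Choosing $T>0$ small enough that $\alpha_1,\alpha_\infty,\alpha_{\tv}\geq 0$ on $[0,T]$ ensures $P^w(t,t_o)\mathcal{D}_{t_o}\subseteq\mathcal{D}_t$.

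\textbf{Lipschitz estimates.} For~\eqref{eq:6}, two solutions $u_1,u_2$ with the same $w$ yield $\partial_t(u_1-u_2) + \diver(v(u_1-u_2)) = m(u_1-u_2)$, whose $\L1$ estimate immediately gives $C_u=M_\infty$. For~\eqref{eq:7}, the equation bounds $\norma{u(t_2)-u(t_1)}_{\L1}$ by $\modulo{t_2-t_1}$ times a uniform bound on $\norma{\diver(vu)}_{\L1}+\norma{mu}_{\L1}+\norma{q}_{\L1}$, controlled via the $\L1,\L\infty,\tv$ bounds on $u$ over $\mathcal{D}_t$ together with~\ref{ip:(V)}--\ref{ip:(P)}--\ref{ip:(Q)}, yielding the stated $C_t$. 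For~\eqref{eq:8}, letting $u_i=P^{w_i}(t,t_o)u_o$, the difference satisfies
\begin{displaymath}
  \partial_t(u_1-u_2) + \diver\bigl(v(t,\cdot,w_2)(u_1-u_2)\bigr)
  = m(t,\cdot,w_2)(u_1-u_2) + S,
\end{displaymath}
with source
\begin{displaymath}
  S = -\diver\bigl((v(\cdot,\cdot,w_1)-v(\cdot,\cdot,w_2))\,u_1\bigr)
  + (m(\cdot,\cdot,w_1)-m(\cdot,\cdot,w_2))\,u_1
  + (q(\cdot,\cdot,w_1)-q(\cdot,\cdot,w_2)).
\end{displaymath}
Expanding the divergence as $(v_1-v_2)\cdot\nabla u_1 + \diver(v_1-v_2)\,u_1$ and bounding the four terms respectively by $V_L\,d_{\mathcal{W}}(w_1,w_2)\,\tv(u_1)$, $V_L\,d_{\mathcal{W}}(w_1,w_2)\,\norma{u_1}_{\L\infty}$, $M_L\,d_{\mathcal{W}}(w_1,w_2)\,\norma{u_1}_{\L1}$, and $Q_L\,d_{\mathcal{W}}(w_1,w_2)$, then inserting the bounds on $\norma{u_1}_{\L1},\norma{u_1}_{\L\infty},\tv(u_1)$ from Step~2, gives $\norma{S(t)}_{\L1}\leq K\,d_{\mathcal{W}}(w_1,w_2)$ with $K$ matching the bracketed factor appearing in $C_w$ in~\eqref{eq:29}. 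A final Gronwall with rate $M_\infty + V_L$ produces the stated $C_w$.

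\textbf{Main obstacle.} The delicate point is estimate~\eqref{eq:8}: bounding $\norma{S}_{\L1}$ forces simultaneous control of $\norma{u_1}_{\L1}$, $\norma{u_1}_{\L\infty}$, and $\tv(u_1)$, which is possible only thanks to the $\tv$ propagation established in Step~2; that in turn requires the $\L1$ bound on $\nabla\diver v$ imposed in~\ref{ip:(V)} precisely for this purpose, so one must be careful in the backwards-Gronwall calibration of $T$ to keep all three profiles $\alpha_\bullet$ nonnegative on $[0,T]$.
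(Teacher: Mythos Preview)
Your outline matches the paper's proof: construct $P^w$ from the Duhamel formula along characteristics, verify that $\mathcal{D}_t$ is invariant via the three a priori bounds, then check \eqref{eq:6}--\eqref{eq:8}. The only difference is one of presentation: the paper carries out every estimate directly on the explicit representation~\eqref{eq:64} (and for $C_w$ simply quotes the stability bound \textbf{(H5)} from~\cite{MR4371486}), whereas you phrase the invariance and the $C_w$ estimate as differential inequalities plus Gronwall --- an equivalent route.

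Two small slips in your $C_w$ step are worth fixing. First, $\norma{(m(\cdot,w_1)-m(\cdot,w_2))\,u_1}_{\L1}$ must be bounded by $M_L\,d_{\mathcal{W}}(w_1,w_2)\,\norma{u_1}_{\L\infty}$, not $\norma{u_1}_{\L1}$, since \ref{ip:(P)} only controls the $\L1$-norm of the difference $m_1-m_2$. Second, the $\L1$-Gronwall for the $v(\cdot,w_2)$-transport of $u_1-u_2$ has rate $M_\infty$, not $M_\infty+V_L$: multiplying by $\sgn(u_1-u_2)$ turns $\diver\!\bigl(v_2(u_1-u_2)\bigr)$ into $\diver\!\bigl(v_2\,|u_1-u_2|\bigr)$, which integrates to zero. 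Your argument therefore actually yields a slightly sharper exponential than the stated $C_w$, which the paper inherits from the more general estimate in~\cite{MR4371486}.
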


\begin{corollary}
  \label{cor:global-existence-IVP}
  Assume $[0, +\infty) \subseteq \hat I$ and that \ref{ip:(V)},
  \ref{ip:(P)}, and~\ref{ip:(Q)} hold.  Then the solution
  to~\eqref{eq:REN} exists for every $t \geq t_o$.
\end{corollary}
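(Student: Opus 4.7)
The plan is to iterate Proposition~\ref{prop:RE} over a partition of an arbitrary finite interval $[0, T^*]$, using a priori bounds on the three norms defining $\mathcal{D}$ that grow at most exponentially in time and hence remain finite on $[0, T^*]$.

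First I would establish the a priori bounds. Testing the equation against $\sign(u)$ and using~\ref{ip:(P)}--\ref{ip:(Q)} yields $\frac{d}{dt}\norma{u(t)}_{\L1(\reali^n;\reali)} \leq M_\infty \norma{u(t)}_{\L1(\reali^n;\reali)} + Q_1$, whence by Gronwall
\begin{equation*}
  \norma{u(t)}_{\L1(\reali^n;\reali)}
  \leq
  e^{M_\infty(t-t_o)}
  \left(\norma{u(t_o)}_{\L1(\reali^n;\reali)} + Q_1 (t-t_o)\right).
\end{equation*}
Following the characteristics of $v(\cdot,\cdot,w)$ and integrating the transport equation along the flow gives an analogous exponential bound for $\norma{u(t)}_{\L\infty(\reali^n;\reali)}$, and differentiating in $x$ along the flow yields the corresponding bound for $\tv(u(t))$; these estimates are exactly those already encoded in $\alpha_1$, $\alpha_\infty$, and $\alpha_{\tv}$ of~\eqref{eq:65}. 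Hence, for any $T^* > 0$ and any initial datum $u_o$ with $N_o := \max\{\norma{u_o}_{\L1(\reali^n;\reali)}, \norma{u_o}_{\L\infty(\reali^n;\reali)}, \tv(u_o)\}$, the inequalities furnish a finite bound $B^* = B^*(N_o, T^*)$ such that every solution of~\eqref{eq:REN} on $[0, T^*]$ satisfies $\norma{u(t)}_{\L1(\reali^n;\reali)}, \norma{u(t)}_{\L\infty(\reali^n;\reali)}, \tv(u(t)) \leq B^*$ for every $t \in [0, T^*]$.

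Next I would fix $R = 2B^*$. Since $\alpha_\bullet(0) \to R$ as $T \to 0^+$ for each $\bullet \in \{1, \infty, \tv\}$ in~\eqref{eq:65}, there exists $\tau > 0$ depending only on $R$ and the structural constants such that Proposition~\ref{prop:RE}, applied with time horizon $\tau$, yields $\alpha_\bullet(0) \geq B^*$; consequently every $u \in \L1(\reali^n;\reali)$ whose three norms are bounded by $B^*$ lies in the corresponding $\mathcal{D}_0$. Starting from $u_o$, apply the proposition on $[0, \tau]$; by the global a priori bound applied to the solution constructed so far, $\norma{u(\tau)}_\bullet \leq B^*$, so $u(\tau)$ lies in the admissible domain of the problem shifted to start at $\tau$. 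Iterating on $[\tau, 2\tau]$, then $[2\tau, 3\tau]$, and so on, the construction reaches $T^*$ after $\lceil T^*/\tau\rceil$ steps since $\tau$ is the same at every step. Concatenating yields a solution on $[0, T^*]$ in the sense of Definition~\ref{def:RENSol}, and the arbitrariness of $T^*$ gives global existence.

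The hard part is the a priori TV bound. The $\L1$ and $\L\infty$ estimates are routine Gronwall arguments along characteristics, but controlling $\tv(u(t))$ requires simultaneously accounting for the dilation produced by the characteristic flow of $v$, the multiplicative growth from $m$, and the variation injected by $q$; the full strength of~\ref{ip:(V)} (in particular $\norma{\nabla v}_{\L\infty(\reali^n;\reali^{n\times n})} \leq V_L$ and $\norma{\nabla\div v}_{\L1(\reali^n;\reali^n)} \leq V_1$), together with the $\tv$ bounds on $m$ and $q$ from~\ref{ip:(P)}--\ref{ip:(Q)}, must all enter. A clean bypass, which is what I would adopt in practice, is to use $\alpha_{\tv}$ from~\eqref{eq:65} directly as the a priori bound: since it is produced by Proposition~\ref{prop:RE} and is uniform over the step $\tau$, the iteration reduces to checking $B^* \leq \alpha_\bullet(0)$ for small enough $\tau$, which is immediate from $\alpha_\bullet(0) \to R = 2B^*$ as $\tau \to 0^+$.
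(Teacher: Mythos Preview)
Your argument is correct but takes a more roundabout route than the paper. The paper's proof rests on a single observation: the structural constants $M_\infty$, $V_L$, $V_1$, $Q_1$, $Q_\infty$ appearing in~\ref{ip:(V)}--\ref{ip:(P)}--\ref{ip:(Q)} are independent of $R$. One can therefore fix the local step
\[
T = \min\left\{\frac{1}{2(M_\infty+V_1)},\ \frac{\ln 2}{M_\infty+V_L}\right\}
\]
once and for all, and the three conditions $\alpha_\bullet(0)>0$ from~\eqref{eq:65} are then satisfied for every sufficiently large $R$. Since $T$ does not shrink as $R$ grows, iterating Proposition~\ref{prop:RE} (with $R$ enlarged at each step, as in the proof of Corollary~\ref{cor:global-existence-ODE}) covers any half-line.

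Your approach instead fixes a target time $T^*$, establishes an a priori ceiling $B^*$ on all three norms over $[0,T^*]$, sets $R=2B^*$, and then picks a step $\tau$ small enough that $\alpha_\bullet(0)\geq B^*$. This is correct and is the natural template when the local existence time genuinely depends on the size of the data; here, however, it introduces an extra layer (the a priori bounds and the dependence of $\tau$ on $T^*$) that is unnecessary precisely because the step can be chosen independently of $R$. The growth estimates you invoke are the same ones already proved in Lemma~\ref{lem:PwProcessIVP} and encoded in~\eqref{eq:65}, so no new ingredient is needed either way; the paper simply exploits the $R$-independence more directly.
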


Continuing now to the act of coupling this Lipschitz process with
another.

\begin{proposition}
  \label{prop:RENCoup}
  Set $\mathcal{U} = \L1 (\reali^n; \reali)$. Assume
  that~\ref{ip:(V)}--\ref{ip:(P)}--\ref{ip:(Q)} hold. Let $P^u$ be a
  Lipschitz process on $\mathcal{W}$, parametrised by
  $u \in \mathcal{U}$. Call
  $P\colon \mathcal{A} \to \L1 (\reali^n; \reali) \times \mathcal{W}$,
  with $P \equiv (P_1,P_2)$, the process generated in
  Theorem~\ref{thm:Metric} by the coupling of process $P^w$, found in
  Proposition~\ref{prop:RE}, with $P^u$. If
  $([t_o,T],t_o,u_o,w_o) \subseteq \mathcal{A}$, then the map
  \begin{displaymath}
    \begin{array}{@{}c@{\,}c@{\,}c@{\,}c@{\,}c@{}}
      u
      & \colon
      & [t_o,T]
      & \to
      & (\L1 \cap \BV) (\reali^n; \reali)
      \\
      &
      &t
      & \mapsto
      & P_1 (t,t_o) (u_o , w_o)
    \end{array}
  \end{displaymath}
  solves
  \begin{displaymath}
    \left\{
      \begin{array}{l@{\qquad}l}
        \partial_t u + \div_x \left( \bar{v}(t, x) \, u\right)
        = \bar m(t, x)u + \bar q(t, x)
        & (t,x) \in [t_o,T] \times \reali^n,
        \\
        u (t_o, x) = u_o(x),
        &x \in \reali^n
      \end{array}
    \right.
  \end{displaymath}
  in the sense of Definition~\eqref{eq:REN}, where
  \begin{displaymath}
    \begin{array}{c}
      \bar m (t,x)
      =
      m \left(t,x,P_2 (t,t_o) (u_o,w_o)\right),
      \quad \quad
      \bar q (t,x)
      =
      q\left(t,x,P_2 (t,t_o) (u_o,w_o)\right) \,,
      \\
      \bar{v} (t, x)
      =
      v \left(t, x, P_2 (t, t_o) (u_o, w_o)\right) \,.
    \end{array}
  \end{displaymath}
\end{proposition}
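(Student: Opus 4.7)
The plan is to realise the candidate $u(t) := P_1(t,t_o)(u_o,w_o)$ as a uniform-in-time $\L1$-limit of Euler polygonals of the local flow $F$ from~\eqref{eq:36}, and then pass to the limit in the weak formulation of Definition~\ref{def:RENSol}, exploiting the Lipschitz dependences of $v$, $m$, $q$ on $w$ afforded by \ref{ip:(V)}--\ref{ip:(P)}--\ref{ip:(Q)}. This mirrors the strategy one would employ for Proposition~\ref{prop:ODEcoupled}, with the PDE weak form playing the role of the ODE pointwise identity.

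First, I fix $\varepsilon>0$, set $t_k=t_o+k\varepsilon$, and let $(u_\varepsilon(t),w_\varepsilon(t)) := F^\varepsilon(t-t_o,t_o)(u_o,w_o)$. By~\eqref{eq:36} and Definition~\ref{def:Euler-polygonals}, on each $[t_k,t_{k+1}]$ one has $u_\varepsilon(t)=P^{w_\varepsilon(t_k)}(t,t_k)\,u_\varepsilon(t_k)$, which by Proposition~\ref{prop:RE} solves~\eqref{eq:REN} with constant parameter $\bar w_\varepsilon(t)\equiv w_\varepsilon(t_k)$ and lives in $\C0([t_k,t_{k+1}];\L1)$. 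Picking $\phi\in\Cc\infty(\mathopen]t_o,T\mathclose[\times\reali^n;\reali)$, using the standard equivalent form of Definition~\ref{def:RENSol} on each subinterval — which carries boundary traces at $t_k$ and $t_{k+1}$ — and summing, the interior traces cancel by the $\L1$-continuity of $u_\varepsilon$ across each $t_k$, a consequence of~\eqref{eq:process0} applied to $P^{w_\varepsilon(t_k)}$, while the traces at $t_o$ and $T$ vanish because $\phi$ has compact support in time. This yields
\begin{displaymath}
\int_{t_o}^{T}\!\!\int_{\reali^n}\!\Bigl[u_\varepsilon\,\partial_t\phi + u_\varepsilon\,v(t,x,\bar w_\varepsilon(t))\cdot\nabla_x\phi + \bigl(m(t,x,\bar w_\varepsilon(t))\,u_\varepsilon + q(t,x,\bar w_\varepsilon(t))\bigr)\phi\Bigr]\d{x}\,\d{t} = 0.
\end{displaymath}

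Next I let $\varepsilon\to 0$. By the construction of $P$ via Euler polygonals in Theorem~\ref{thm:main}, $F^\varepsilon(\cdot,t_o)(u_o,w_o)\to P(\cdot,t_o)(u_o,w_o)$ uniformly, so $u_\varepsilon\to u$ in $\C0([t_o,T];\L1)$ and $w_\varepsilon\to\bar w$ in $\C0([t_o,T];\mathcal{W})$, where $\bar w(t):=P_2(t,t_o)(u_o,w_o)$; combined with the Lipschitz-in-time bound~\eqref{eq:7} on each $[t_k,t_{k+1}]$, the piecewise-constant parameter $\bar w_\varepsilon$ also converges uniformly to $\bar w$. Splitting each integrand above into a piece containing $u_\varepsilon-u$ (absorbed via $V_\infty,M_\infty,Q_\infty$) and a piece containing the $w$-differences of $v$, $m$, $q$ (absorbed via $V_L,M_L,Q_L$ and the uniform $\L1$-bound on $u_\varepsilon$), every error term vanishes, giving the weak formulation of the claimed coupled equation with time-dependent parameter $\bar w(t)$; the initial datum $u(t_o,\cdot)=u_o$ follows from~\eqref{eq:process0}. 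The main delicate point will be the convective term $u_\varepsilon\,v\cdot\nabla_x\phi$, since~\ref{ip:(V)} only controls $v$ in $\L\infty$ (and $\nabla\cdot v$ in $\L1$) with respect to $w$; however this suffices because $\nabla_x\phi$ has compact support and $u_\varepsilon$ is uniformly bounded in $\L1$, so the product is dominated and convergence is straightforward.
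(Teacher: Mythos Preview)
Your argument is correct and follows the same overall strategy as the paper: discretise time, use that on each subinterval the approximation solves~\eqref{eq:REN} with a frozen parameter, and pass to the limit in the weak formulation term by term using the Lipschitz bounds from~\ref{ip:(V)}--\ref{ip:(P)}--\ref{ip:(Q)}.

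The main difference is the approximation you use. You work with the full Euler polygonal $F^\varepsilon$ and invoke its uniform convergence to $P$; the paper instead keeps $P$ as the base object and on each subinterval $[t_{i-1},t_i]$ compares $P(t,t_{i-1})(\tilde u_i,\tilde w_i)$ to $F(t-t_{i-1},t_{i-1})(\tilde u_i,\tilde w_i)$ directly via the tangency estimate~\eqref{eq:tangent}. This makes the paper's proof fully self-contained given the statement of Theorem~\ref{thm:main}, whereas your key step ``$F^\varepsilon\to P$ uniformly'' is not actually stated there---only the tangency of $P$ to $F$ is---so you are implicitly appealing to the construction in~\cite{zbMATH05509403}. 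A second, minor difference: where you invoke ``the standard equivalent form with traces'' and cancel interior contributions by continuity, the paper makes this explicit by multiplying $\phi$ by a cut-off $\chi_\eps\in\Cc\infty(\mathopen]t_{i-1},t_i\mathclose[)$, using Definition~\ref{def:RENSol} verbatim on each piece, and then letting $\eps\to 0$ to recover the boundary terms; the resulting telescoping sum is again controlled by~\eqref{eq:tangent}. Both routes are sound; the paper's is slightly more explicit about where each estimate enters.
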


\subsection{The Boundary Value Problem for a Linear Balance Law}
\label{subsec:IBVP2}

Consider the model
\begin{equation}
  \label{eq:IBVP2Eq}
  \left\{
    \begin{array}{@{}l@{\qquad\quad}r@{\,}c@{\,}l@{}}
      \partial_t u + \partial_x \left(v(t, x) \, u\right)
      =
      m (t,x,w) \, u + q (t,x,w)
      & (t,x)
      & \in
      & \hat I \times \reali_+
      \\
      u (t,0) = b (t)
      & t
      & \in
      & \hat I
      \\
      u (t_o,x) = u_o (x)
      & x
      & \in
      & \reali_+ \,.
    \end{array}
  \right.
\end{equation}
where $u_o \in \L1 (\reali_+; \reali)$, $t_o \in \hat I$ and $w \in
\mathcal{W}$. Throughout, we choose left continuous representatives of
$\BV$ functions. Proofs are deferred to~\S~\ref{subs:IBVP2}.

\begin{definition}
  \label{def:IBVP}
  For a fixed $w \in \mathcal{W}$, a function
  $u \in \C0 \left([t_o,T]; \L1 (\reali_+; \reali)\right)$, with
  $[t_o,T] \subseteq \hat I$, such that
  $u (t) \in \BV (\reali_+; \reali)$ for a.e. $t \in [t_o,T]$ is a
  solution to~\eqref{eq:IBVP2Eq} if:
  \begin{enumerate}
  \item\label{item:IBVPsol1} For all
    $\phi \in \Cc\infty (\mathopen]t_o,T\mathclose[ \times
    \pint{\reali}_+; \reali)$
    \begin{eqnarray*}
      \int_{t_o}^T \int_{\reali_+}
      \Bigl(
      u (t,x) \, \partial_t \phi (t,x)
      +
      v (t,x) \, u (t,x) \, \partial_x \phi (t,x)\qquad\qquad
      \\
      +
      \left(m (t,x,w) \, u(t,x) + q (t,x,w)\right) \, \phi (t,x)
      \Bigr)
      \d{x} \d{t}
      & =
      & 0 \,.
    \end{eqnarray*}
  \item\label{item:IBVPsol2} For a.e.~$x \in \reali_+$,
    $u (t_o,x) = u_o (x)$.
  \item\label{item:IBVPsol3} For a.e.~$t \in [t_o, T]$,
    $\lim_{x\to 0+} u (t,x) = b (t)$.
  \end{enumerate}
\end{definition}

\begin{proposition}
  \label{prop:IBVP2}
  Let $\mathcal{U} = \L1 (\reali_+; \reali)$ and fix
  $b \in \BV (\hat I; \reali)$. For $R >0$, define
  \begin{equation}
    \label{eq:66}
    \mathcal{D}
    =
    \left\{u \in \mathcal{U} \colon
      \max\left\{\norma{u}_{\L1(\reali_+; \reali)} ,\,
        \norma{u}_{\L\infty(\reali_+;\reali)} ,\,
        \tv(u) + \modulo{b(\sup \hat I) - u(0)}\right\} \leq R
    \right\} \,.
  \end{equation}
  Assume
  \begin{enumerate}[label=\bf{(BP\arabic*)},
    ref=\textup{\textbf{(BP\arabic*)}}, left=0pt]
  \item\label{item:IBVP1} There exist positive constants
    $\check v, \hat v, V_1, V_\infty$ such that for all
    $v \in \C{0,1} (\hat I\times \reali_+; [\check v, \hat v])$ and
    for all $(t,x) \in \hat I \times \reali_+$
    \begin{eqnarray*}
      \tv\left(v (\cdot, x); \hat I\right)
      +
      \tv\left(v (t, \cdot)\right)
      & \leq
      & V_\infty \,,
      \\
      \tv\left(\partial_x v (t, \cdot)\right)
      +
      \norma{\partial_x v (t, \cdot)}_{\L\infty (\reali_+; \reali)}
      & \leq
      & V_L\,.
    \end{eqnarray*}
  \item\label{item:IBVP2} For all $w \in \mathcal{W}$,
    $m (\cdot, \cdot,w) \in \C0(\hat I \times \reali_+; \reali)$ and
    there exist $M_\infty, M_L$ such that for all $t \in \hat I$,
    $w,w_1,w_2 \in \mathcal{W}$,
    \begin{eqnarray*}
      \tv\left(m (t,\cdot,w)\right)
      +
      \norma{m (t, \cdot,w)}_{\L\infty (\reali_+; \reali)}
      & \leq
      & M_\infty \,,
      \\
      \norma{m (t, \cdot, w_1) - m (t, \cdot, w_2)}_{\L1 (\reali_+; \reali)}
      & \leq
      & M_L \; d_{\mathcal{W}} (w_1,w_2) \,.
    \end{eqnarray*}

  \item\label{item:IBVP3} For all $w \in \mathcal{W}$,
    $q (\cdot, \cdot,w) \in \C0 \left(\hat I; \L1(\reali_+;
      \reali)\right)$ and there exist $Q_1, Q_\infty$ such that for
    all $t \in \hat I$ and $w,w_1,w_2 \in \mathcal{W}$, and
    \begin{eqnarray*}
      \norma{q (t, \cdot, w)}_{\L1 (\reali_+; \reali)}
      & \leq
      & Q_1 \,,
      \\
      \tv\left(q (t, \cdot, w)\right)
      +
      \norma{q (t, \cdot, w)}_{\L\infty (\reali_+; \reali)}
      & \leq
      & Q_\infty \,,
      \\
      \norma{q (t, \cdot, w_1) - q (t, \cdot, w_2)}_{\L1 (\reali_+; \reali)}
      & \leq
      & Q_L \; d_{\mathcal{W}} (w_1,w_2) \,.
    \end{eqnarray*}
  \item\label{item:IBVP4}
    $b \in (\L1 \cap \L\infty \cap \BV) (\hat I; \reali)$, is left
    continuous, and there exist positive constants $B_1$ and
    $B_\infty$ such that
    \begin{eqnarray*}
      \norma{b}_{\L1 (\hat I; \reali)}
      & \leq
      & B_1 \,,
      \\
      \tv(b)
      +
      \norma{b}_{\L\infty (\hat I; \reali)}
      & \leq
      & B_\infty \,.
    \end{eqnarray*}
  \end{enumerate}
  Then, there exists $R,T>0$, such that $[0,T] \subseteq \hat I$, and
  a Lipschitz process on $\mathcal{U}$, parametrized by $\mathcal{W}$
  in the sense of Definition~\ref{def:LipProc}, whose orbits
  solve~\eqref{eq:IBVP2Eq} in the sense of Definition~\ref{def:IBVP},
  with
  \begin{equation}
    \label{eq:IBVP2Const}
    \begin{array}{c}
      C_u = M_\infty
      \,,\quad
      C_t = [
      \hat{v}(B_1 + 2R + R(M_\infty + V_L)T)
      + M_\infty R
      + Q_1
      ]e^{M_\infty T}
      \,,\quad\\
      C_w
      =\left[
      B_\infty M_L
      + \hat v \, Q_L
      +\frac{1}{2} \hat v \, Q_\infty \, M_L \, T
      + M_L \, R
      + Q_L
      + \frac{1}{2} \, M_L \, Q_\infty \, T
      \right]
      e^{M_\infty T}\,,
      \\
      \mathcal{D}_t
      =
      \left\{r \in \mathcal{U} \colon
      \begin{array}{l}
        \norma{u}_{\L1(\reali_+; \reali)} \leq \alpha_1(t)\,,\;
        \norma{u}_{\L\infty(\reali_+;\reali)} \leq \alpha_\infty(t)\,,
        \\
        \tv(u) + \modulo{b(t) - u(0)} \leq \alpha_{TV}(t)
      \end{array}
      \right\}
    \end{array}
  \end{equation}
  where
  \begin{eqnarray}
    \nonumber
    \alpha_1 (t)
    & =
    & Re^{-M_\infty (T - t)}
      - (\hat{v} B_\infty + Q_1) (T - t) e^{M_\infty t}
    \\
    \nonumber
    \alpha_\infty (t)
    & =
    & R e^{-M_\infty (T - t)}
      - Q_\infty (T - t)
    \\
    \nonumber
    \alpha_{\tv} (t)
    & =
    & R \left(1 - (M_\infty + V_L)(T - t)\right) e^{(M_\infty + V_L)(T - t)}
    \\
    \nonumber
    &
    &
      -2 Q_\infty(1 + (M_\infty + V_L)t)(T - t)e^{(M_\infty + V_L)t}
    \\
    \nonumber
    &
    & - B_\infty(M_\infty + V_L)(T - t)e^{(M_\infty + V_L)t}
      - \tv(b; [t, T])e^{(M_\infty + V_L)t} \,.
  \end{eqnarray}
\end{proposition}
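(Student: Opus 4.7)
The plan is to construct $P^w$ directly via the method of characteristics and then verify, in turn, that the resulting formula yields a weak solution, preserves the domain $\mathcal{D}_t$, and satisfies the three Lipschitz estimates of Definition~\ref{def:LipProc}. Since~\ref{item:IBVP1} provides $v \in \C{0,1}(\hat I \times \reali_+; [\check v, \hat v])$ with $\check v > 0$, the backward characteristic $s \mapsto X(s; t, x)$, solving $\dot X = v(s, X)$ with $X(t; t, x) = x$, is globally defined, Lipschitz, and strictly increasing. Each $(t, x) \in (t_o, T] \times \reali_+$ therefore traces back either to the line $\{s = t_o\}$ at a point $x_o \geq 0$ (the \emph{interior} branch) or to the boundary $\hat I \times \{0\}$ at a unique time $\tau(t, x) \in (t_o, t)$ (the \emph{boundary} branch). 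Along each characteristic, setting $U(s) = u(s, X(s; t, x))$ reduces the PDE in~\eqref{eq:IBVP2Eq} to the linear ODE
\begin{equation*}
  \dot U(s)
  =
  \bigl(m(s, X(s; t, x), w) - \partial_x v(s, X(s; t, x))\bigr)\, U(s)
  +
  q(s, X(s; t, x), w),
\end{equation*}
whose Duhamel integration yields an explicit closed-form expression for $u(t, x)$ involving $u_o$ on the interior branch, $b(\tau(t, x))$ on the boundary branch, and an integral of $q$ weighted by $\exp\!\left(\int(m - \partial_x v)\,\d{s}\right)$. I would take this formula as the definition of $P^w(t, t_o) u_o$.

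It then remains to check, first, that Definition~\ref{def:IBVP} is satisfied -- a routine test-function computation along characteristics, with the attainment of the trace $b$ at $x = 0+$ following from the left-continuity granted by~\ref{item:IBVP4} -- and, second, that the three invariant bounds in~\eqref{eq:IBVP2Const} are preserved. The $\L\infty$ bound is immediate, with factor $e^{(M_\infty + V_L)(t - t_o)}$ and source contribution $Q_\infty(t - t_o)$ giving $\alpha_\infty(t)$. The $\L1$ bound uses the change of variables $x = X(t; t_o, x_o)$, whose Jacobian is controlled by $e^{V_L (t - t_o)}$, combined with a boundary contribution bounded by $\hat v \, \norma{b}_{\L\infty}(t - t_o)$, producing $\alpha_1(t)$ via Gronwall. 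The total-variation estimate is the most delicate: differencing the explicit formula in $x$ and grouping the resulting terms yields the four contributions in $\alpha_{\tv}(t)$ -- the interior variation of $u_o$, the boundary variation $\tv(b; [t, T])$, the source variation proportional to $Q_\infty$, and the jump $\modulo{b(t) - u(t, 0+)}$ -- each amplified by factors $e^{(M_\infty + V_L)(t - t_o)}$.

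With the domain in place, the three Lipschitz estimates follow. For~\eqref{eq:6}, fixing $w$ makes $u_1 - u_2$ satisfy the homogeneous linear problem with zero source and zero boundary; estimating the resulting Duhamel formula gives the factor $e^{M_\infty(t - t_o)}$, hence $C_u = M_\infty$. For~\eqref{eq:7}, from $u(t_2, \cdot) - u(t_1, \cdot) = \int_{t_1}^{t_2}\bigl(-\partial_x(v u) + m u + q\bigr)\,\d{s}$ one takes $\L1$ norms, uses the invariant bounds of $\mathcal{D}_s$ together with $|u(s, 0+)| \leq \modulo{b(s)} + \alpha_{\tv}(s)$, and absorbs the $e^{M_\infty T}$ factor arising from Gronwall to recover the stated $C_t$. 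Finally, for~\eqref{eq:8}, the characteristics do not depend on $w$ because $v$ does not, so the difference of the two closed-form expressions at $w_1, w_2$ reduces to integrated differences of the exponential weights and of $q$; invoking the Lipschitz bounds in~\ref{item:IBVP2} and~\ref{item:IBVP3}, together with $\norma{b}_{\L\infty} \leq B_\infty$ and Gronwall, produces $C_w$ in the form stated in~\eqref{eq:IBVP2Const}.

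The main obstacle will be the total-variation estimate, because it requires simultaneous control of the spatial variation inside $\reali_+$, the variation injected from $b$ through the boundary, and the interface jump $\modulo{b(t) - u(t, 0+)}$, all of which are coupled through the $V_L$-term coming from $\partial_x v$. The composite structure of $\alpha_{\tv}(t)$ reflects exactly this three-fold accounting, and tracking it with enough precision to recover both the invariant domain and the sharp form of $C_w$ -- which inherits the $V_L$-term through the TV bound -- requires careful bookkeeping of the contributions from $u_o$, $b$ and $q$ separately.
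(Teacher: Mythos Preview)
Your proposal is correct and follows essentially the same route as the paper: define $P^w$ via the explicit characteristic formula~\eqref{eq:rIBVP2}, verify domain invariance by separate $\L1$, $\L\infty$, and $\tv$ estimates (the paper's Lemma~\ref{lem:IBVP2Process}), and then establish~\eqref{eq:6}--\eqref{eq:8} directly from the formula. The only minor methodological difference is in the time-Lipschitz estimate~\eqref{eq:7}: you propose to integrate the equation $\partial_t u = -\partial_x(vu) + mu + q$ over $[t_1,t_2]$, whereas the paper instead works directly with the closed-form expression, splitting $\reali_+$ into the boundary-influenced region $[0,\sigma(t))$ and the interior region $[\sigma(t),+\infty)$ and estimating each piece separately; the latter avoids having to justify the integrated identity in the weak/$\BV$ setting and is what actually produces the specific constant $C_t$ in~\eqref{eq:IBVP2Const}.
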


A result entirely analogous to
Corollary~\ref{cor:global-existence-IVP} can be proved also in the
case of~\eqref{eq:IBVP2Eq}.

\begin{proposition}
  \label{prop:coupleIBVP2}
  Set $\mathcal{U} = \L1 (\reali_+;\reali)$.
  Assume~\ref{item:IBVP1}--\ref{item:IBVP2}--\ref{item:IBVP3}--\ref{item:IBVP4}.
  Let $P^u$ be a Lipschitz process on $\mathcal{W}$, parametrised by
  $u \in \mathcal{U}$.  Set
  $P\colon \mathcal{A} \to \mathcal{U} \times \mathcal{W}$, with
  $P \equiv (P_1, P_2)$, to be the process generated in
  Theorem~\ref{thm:Metric} by the coupling of the process $P^w$,
  constructed in Proposition~\ref{prop:IBVP2}, with $P^u$.  If
  $\left(t, t_o, ( u_o, w_o )\right) \in \mathcal{A}$, then
  \begin{equation}
    \label{eq:IBVP2Compr}
    \begin{array}{@{}c@{\,}c@{\,}c@{\,}c@{\,}c@{}}
      u
      & \colon
      & [t_o,T]
      & \to
      & \L1(\reali_+;\reali)
      \\
      &
      &t
      & \mapsto
      & P_{1} (t,t_o) \left(u_o, w_o\right)
    \end{array}
  \end{equation}
  is a solution to
  \begin{equation}
    \label{eq:IBVP2CompProb}
    \left\{
      \begin{array}{@{}l@{\qquad\quad}r@{\,}c@{\,}l@{}}
        \partial_t u + \partial_x \left(v (t,x) \,  u\right)
        =
        \bar m (t,x) \, u + \bar q (t,x)
        & (t,x)
        & \in
        & [t_o,T] \times \reali_+
        \\
        u (t,0) = b (t)
        & t
        & \in
        & [t_o,T]
        \\
        u (t_o,x) = u_o (x)
        & x
        & \in
        & \reali_+
      \end{array}
    \right.
  \end{equation}
  in the sense of Definition~\ref{def:IBVP}, where
  \begin{equation}
    \label{eq:IBVP2CompProb2}
    \bar m (t,x)
    =
    m\left(t,x,P_2 (t,t_o)\left(u_o, w_o\right)\right) \,,
    \quad
    \bar q (t,x)
    =
    q\left(t,x,P_2 (t,t_o)\left(u_o, w_o\right)\right) \,.
  \end{equation}
\end{proposition}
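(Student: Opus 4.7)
The approach mirrors the proofs of Propositions~\ref{prop:ODEcoupled} and~\ref{prop:RENCoup}. Write the coupled trajectory as $u(t) := P_1(t,t_o)(u_o,w_o)$ and $w(t) := P_2(t,t_o)(u_o,w_o)$ for $t \in [t_o, T]$. The initial identity $u(t_o) = u_o$ is immediate from~\eqref{eq:process0}, settling item~\ref{item:IBVPsol2} of Definition~\ref{def:IBVP}; the containment $u(t) \in \BV(\reali_+;\reali)$ follows from~\eqref{eq:78} together with the description of $\mathcal{D}_t$ given in~\eqref{eq:IBVP2Const}. The quantitative backbone of the argument is the tangency estimate~\eqref{eq:tangent} of Theorem~\ref{thm:main}: since~\eqref{eqn:tangM} gives $\omega(\xi) = C_t\, C_u\, \xi$, it yields a quadratic-in-time bound
\begin{equation*}
  \norma{u(t+\tau) - P^{w(t)}(t+\tau,t)\,u(t)}_{\L1(\reali_+;\reali)}
  +
  d_{\mathcal{W}}\!\left( w(t+\tau),\, P^{u(t)}(t+\tau,t)\,w(t) \right)
  \leq K\,\tau^{2},
\end{equation*}
valid for all admissible $t,\tau$, with $K$ depending only on $L$, $C_t$, $C_u$ and $T$.

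To obtain item~\ref{item:IBVPsol1}, fix $\phi \in \Cc\infty(\mathopen]t_o,T\mathclose[ \times \pint{\reali}_+;\reali)$ and a uniform partition $t_o = s_0 < \cdots < s_N = T$ of mesh $\Delta = (T-t_o)/N$. On each slab $[s_i,s_{i+1}]$ freeze the parameter by setting $u_i(t) := P^{w(s_i)}(t,s_i)\, u(s_i)$; by Proposition~\ref{prop:IBVP2}, $u_i$ solves~\eqref{eq:IBVP2Eq} with constant parameter $w(s_i)$ and initial datum $u(s_i)$, hence $u_i$ satisfies the weak identity of Definition~\ref{def:IBVP} on $(s_i,s_{i+1})$ tested against $\phi$. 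Summing these identities over $i$, the time-boundary contributions telescope up to node mismatches of order $\norma{u_i(s_{i+1}) - u(s_{i+1})}_{\L1} = O(\Delta^{2})$, whose total across the $N$ nodes is $O(\Delta)$; the extremal terms at $s_0$ and $s_N$ vanish since $\phi$ has compact support in time. Inside each slab integral, replace $u_i$ by $u$ using the tangency bound ($\L1$-error $O(\Delta^{2})$ per slab, $O(\Delta)$ summed), and replace $m(\cdot,\cdot,w(s_i))$ and $q(\cdot,\cdot,w(s_i))$ by $\bar m$ and $\bar q$ via assumptions~\ref{item:IBVP2}--\ref{item:IBVP3} together with $d_{\mathcal{W}}(w(s_i), w(t)) \leq C_t\,\Delta$, which is~\eqref{eq:7} applied to $P^u$. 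Letting $N \to \infty$ delivers the required weak identity for $u$ with $\bar m, \bar q$ defined in~\eqref{eq:IBVP2CompProb2}.

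For the trace condition in item~\ref{item:IBVPsol3}, each $u_i$ satisfies $\lim_{x\to 0+} u_i(t,x) = b(t)$ for a.e.\ $t \in [s_i,s_{i+1}]$; since the $\BV$ bounds built into $\mathcal{D}_t$ are uniform in $t$ and the quadratic closeness estimate of the first paragraph transfers these trace values to $u$ in the limit, one concludes $\lim_{x\to 0+} u(t,x) = b(t)$ a.e. The principal technical obstacle is precisely this $\L1$-bookkeeping: the $O(\Delta^{2})$ contributions coming from the tangency property, from replacing $u_i$ by $u$, and from substituting frozen for time-varying parameters must all accumulate into an $O(\Delta)$ total, uniformly in $\phi$, so that they vanish in the limit. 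Carrying this through requires invoking each of the Lipschitz estimates~\ref{item:IBVP1}--\ref{item:IBVP3} alongside the quantitative bounds~\eqref{eq:6}--\eqref{eq:7}--\eqref{eq:8} of the Lipschitz process structure on both $P^w$ and $P^u$.
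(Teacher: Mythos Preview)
Your treatment of item~\ref{item:IBVPsol1} (the weak identity) follows the discretization template used in the paper's proof of Proposition~\ref{prop:RENCoup} and is sound in outline. The real problem is the boundary trace, item~\ref{item:IBVPsol3}. You claim that the $\L1$-closeness $\norma{u(t) - u_i(t)}_{\L1} = O(\Delta^2)$ together with uniform $\BV$ bounds ``transfers'' the trace value $u_i(t,0+) = b(t)$ to $u(t,0+)$. This implication is false in general: take $u_n := b\,\chi_{[0,1/n]}$ and $u := 0$; then $u_n(0+) = b$, $\tv(u_n) = \modulo{b}$, $\norma{u_n}_{\L\infty} = \modulo{b}$, and $u_n \to u$ in $\L1$, yet $u(0+) = 0 \neq b$. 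Since the weak identity in Definition~\ref{def:IBVP} is tested only against $\phi$ compactly supported in $\pint{\reali}_+$, it carries no boundary information, so nothing in your argument forces $u(t,0+) = b(t)$.

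The paper sidesteps this difficulty by taking a different route altogether. Rather than verifying Definition~\ref{def:IBVP} directly for $u$, it introduces the solution operator $\Pi_{(t_o,u_o,w_o)}$ for the IBVP~\eqref{eq:IBVP2CompProb} with the \emph{time-varying} coefficients $\bar m,\bar q$ already inserted; this operator is given by the explicit characteristics formula~\eqref{eq:rIBVP2} (with $w$ replaced by $P_2(t,t_o)(u_o,w_o)$ in $m,q$) and therefore satisfies all three items of Definition~\ref{def:IBVP} by construction, including the trace. One then proves the \emph{equality} $u(t) = \Pi(t,t_o)u_o$ via the integral stability estimate~\cite[Theorem~2.9]{BressanLectureNotes}, which reduces the question to showing
\[
  \liminf_{h\to 0+}\frac{1}{h}\norma{P_1(\tau+h,\tau)\,P(\tau,t_o)(u_o,w_o) - \Pi(\tau+h,\tau)\,u(\tau)}_{\L1(\reali_+;\reali)} = 0.
\]
Tangency~\eqref{eq:tangent} handles the replacement of $P_1$ by $P^{P_2(\tau,t_o)(u_o,w_o)}$; the remaining comparison $\norma{P^{w(\tau)}(\tau+h,\tau)u(\tau) - \Pi(\tau+h,\tau)u(\tau)}_{\L1} = O(h^2)$ follows by directly comparing the two instances of formula~\eqref{eq:rIBVP2}. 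Since $u$ coincides with $\Pi(\cdot,t_o)u_o$ exactly, the boundary trace is inherited for free. Your discretization strategy could in principle be completed, but it would require an independent argument for the trace that does not reduce to $\L1$ limits.
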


\subsection{Measure Valued Balance Laws}
\label{subsec:meas-valu-balance}

Following~\cite{MR2871800}, consider the following measure valued
balance law
\begin{equation}
  \label{eq:30}
  \left\{
    \begin{array}{l@{\qquad}l}
      \partial_t \mu
      +
      \partial_x \left(b (t,\mu,w) \, \mu\right)
      +
      c (t,\mu,w) \, \mu
      =
      \int_{\reali_+} \left(\eta (t,\mu,w)\right) (y) \, \d{\mu (y)}
      & t \in \hat I
      \\
      \mu (t_o)
      =
      \mu_o
    \end{array}
  \right.
\end{equation}
for $\mu_o \in \mathcal{M}^+ (\reali_+)$, the set of bounded, positive
Radon measures on $\reali_+$ equipped with the following distance,
induced by the dual norm of $\W{1}{\infty}(\reali_+;\reali)$,
see~\cite[\S~2]{MR2871800}:
\begin{equation}
  \label{eq:31}
  d_{\mathcal{M}}(\mu_1, \mu_2)
  =
  \sup
  \left\{
    \int_{\reali_+} \phi \, \d(\mu_1-\mu_2)
    \colon
    \phi \in \C1(\reali_+;\reali)
    \mbox{ and }
    \norma{\phi}_{\W{1}{\infty}} \leq 1
  \right\} \,.
\end{equation}
We refer to~\cite{MR4309603} for basic measure theoretic
results. Below, if $X$ is a Banach space, then $\BC(\hat I;X)$ is the
space of bounded continuous functions with the supremum norm.
$\BCvarF{\alpha}{1}(\hat I \times {\mathcal M}^+(\reali_+); X)$ is the
space of $X$ valued functions which are bounded with respect to the
$\norma{\cdot}_{X}$ norm, H\"older continuous with exponent $\alpha$
with respect to time and Lipschitz continuous in the measure variable
with respect to $d_{\mathcal{M}}$ in~\eqref{eq:31}. These spaces are
equipped with the norms
\begin{eqnarray*}
  \norma{f}_{\BC (\hat I; X)}
  & =
  & \sup_{t \in \hat I} \norma{f (t)}_X \,,
  \\
  \norma{f}_{\BCvarF{\alpha}{1} (\hat I \times \mathcal{M}^+ (\reali_+);X)}
  & =
  & \sup_{t\in \hat I, \mu\in{\mathcal{M}^+(\reali_+)}}
    \left(
    \norma{f(t,\mu)}_{X} +
    \Lip\left(f(t,\cdot)\right) +
    \mathbf{H}\left(f(\cdot,\mu)\right)
    \right) \,,
  \\
  \norma{f}_{(\BC \cap \W1\infty )(\reali_+;\mathcal{M}^+ (\reali_+))}
  & =
  & \sup_{x \in \reali_+}
    \norma{f(x)}_{\mathcal{M} (\reali_+)} +
    \Lip(f) \,,
\end{eqnarray*}
where, with a slight abuse of notation,
\begin{eqnarray*}
  \Lip \left(f (t, \cdot)\right)
  & =
  & \sup_{\underset{\mu_1\neq\mu}{\mu_1,\mu_2 \in \mathcal{M}^+ (\reali_+)}}
    \left(
    \norma{f(t,\mu_1) - f(t,\mu_2)}_{X}
		\middle/ d_{\mathcal{M}}(\mu_1,\mu_2)
    \right) \,,
  \\
  \mathbf{H}\left(f(\cdot,\mu)\right)
  & =
  & \sup_{s_1,s_2\in \hat I}
    \left(
    \norma{f(s_1,\mu) - f(s_2,\mu)}_{X}
    \middle/ \modulo{s_1 - s_2}^{\alpha}
    \right) \,,
  \\
  \Lip(f)
  & =
  & \sup_{\underset{x_1\neq x_2}{x_1,x_2 \in \reali_+}}
	\left(d_{\mathcal{M}} \left(f (x_1),f (x_2)\right)
    \middle/ \norma{x_2-x_1}\right) \,.
\end{eqnarray*}

\begin{definition}
  \label{def:MVBL}
  Given $T \in \hat I$ with $T> t_o$ and $w \in \mathcal{W}$, a
  function $\mu \colon [t_o,T] \to \mathcal{M}^+(\reali_+)$ is a
  \emph{weak solution} to~\eqref{eq:30} on the time interval $[t_o,T]$
  if $\mu$ is narrowly continuous with respect to time (i.e., for
  every bounded function $\psi \in \C0\left(\reali_+; \reali\right)$,
  the map $t \mapsto \int_{\reali_+} \psi(x) \d{\mu(t, x)} $ is
  continuous), and for all
  $\phi \in (\C1 \cap \W{1}{\infty}) \left([t_o,T]\times
    \reali_+;\reali\right)$, the following equality holds:
  \begin{eqnarray*}
    &
    & \int_{t_o}^T \int_{\reali_+}
      \left(
      \partial_t \phi(t,x) + \left(b(t, \mu, w) \right)(x) \;
      \partial_x \phi(t,x)
      -
      \left(c(t, \mu, w)\right)(x) \; \phi(t,x) \right)
      \d\mu(t,x) \d{t} \nonumber
    \\
    &
    & + \int_{t_o}^T \int_{\reali_+} \left( \int_{\reali_+} \phi(t,x)
      \d{\left[\eta(t, \mu, w)(y)\right]} (x) \right) \d\mu(t,y) \d{t}
    \\
    & =
    & \int_{\reali_+} \phi(T,x) \; \d{\mu}(T,x)
      -
      \int_{\reali_+}  \phi(t_o,x) \; \d{\mu_o}(x) \,.
  \end{eqnarray*}
\end{definition}

\begin{proposition}
  \label{prop:MVBL}
  Let $R>0$. Set $\mathcal{U} = \mathcal{M}^+ (\reali)$ and let
  $\mathcal{D} = \left\{\mu \in \mathcal{M}^+ (\reali_+) \colon \mu
    (\reali_+) \leq R\right\}$. Consider the Cauchy
  problem~\eqref{eq:30} under the assumptions, for some positive
  constant $\hat{L}$,

  \begin{enumerate}[label=\bf{(MVBL\arabic*)},
    ref=\textup{\textbf{(MVBL\arabic*)}}, left=0pt]
  \item \label{ip:MVBL1} For every $w \in \mathcal{W}$,
    $b(\cdot, \cdot, w) \in \BC^{\alpha, 1}( \hat{I} \times
    \mathcal{D}; \W1\infty\left(\reali_+;\reali)\right)$.  Further,
    for every $w, w_1, w_2 \in \mathcal{W}$, $t \in \hat{I}$, and
    $\mu \in \mathcal{D}$, $ b(t, \mu, w)(0) \geq 0 $, and, for some
 $B>0$,
    \begin{eqnarray*}
      \norma{b (t,\mu,w)}_{\W1\infty (\reali_+; \reali)}
      & \leq
      & B \,,
      \\
      \norma{b (\cdot,\mu,w_1) - b (\cdot,\mu,w_2)}_{\BC (\hat I; \W1\infty (\reali_+;\reali))}
      & \leq
      & \hat L \; d_{\mathcal{W}} (w_1,w_2) \,.
    \end{eqnarray*}
  \item \label{ip:MVBL2} For every $w \in \mathcal{W}$,
    $c(\cdot, \cdot, w) \in \BC^{\alpha, 1}( \hat{I} \times
    \mathcal{D}; \W1\infty(\reali_+;\reali))$.  Further, there
    exists a positive constant $C \geq 0$ such that, for all
    $w, w_1, w_2 \in \mathcal{W}$, $\mu \in \mathcal{D}$ and
    $t \in \hat{I}$,
    \begin{eqnarray*}
      \norma{c (t,\mu,w)}_{\W1\infty (\reali_+; \reali)}
      &\leq
      & C\,,
      \\
      \norma{c (\cdot,\mu,w_1) - c (\cdot,\mu,w_2)}_{\BC (\hat I; \W1\infty (\reali_+;\reali))}
      &\leq
      & \hat L \; d_{\mathcal{W}} (w_1,w_2) \,.
    \end{eqnarray*}
  \item \label{ip:MVBL3} For all $w \in \mathcal{W}$,
    $\eta (\cdot,\cdot,w) \in \BCvarF{\alpha}{1} \left(
      \hat{I}\times\mathcal{D};\; (\BC \cap \W1\infty) (\reali_+;
      \mathcal{M}^+ (\reali_+)) \right)$.  Further, there exists an
    $E>0$ such that, for all
    $w, w_1, w_2 \in \mathcal{W}$, $t \in \hat{I}$, and
    $\mu \in \mathcal{D}$,
    \begin{eqnarray*}
      \norma{\eta (t,\mu,w)}%
      _{(\BC \cap \W1\infty) (\reali_+; \mathcal{M}^+(\reali_+))}
      & \leq
      & E \,,
      \\
      \norma{\eta (\cdot,\mu,w_1) - h (\cdot,\mu,w_2)}
      _{ \BC (\hat I;\BC \cap \W1\infty)
      (\reali_+; \mathcal{M}^+(\reali_+))}
      &\leq
      & \hat L \; d_{\mathcal{W}} (w_1,w_2) \,.
    \end{eqnarray*}
  \end{enumerate}
  \noindent Then, there exist $T>0$, such that
  $[0,T] \subseteq \hat I$, and a Lipschitz Process on
  $\mathcal{M}^+ (\reali^n)$, pametrized by $\mathcal{W}$ in the sense
  of Definition~\ref{def:LipProc} whose orbits solve~\eqref{eq:30} in
  the sense of Definition~\ref{def:MVBL},
  with
  \begin{equation}
    \label{eq:35}
    \begin{array}{c}
      C_u = {3 (B+C+E)}
      \,,\quad
      C_t = (B+C+E) \, e^{2 (B+C+E) T} R \,,
      \\
      C_w
      =
      C^* ({T}, B, C, E) \; R \; L \;  e^{5 (B+C+E)  T} \,,
      \\
      \mathcal{D}_t
      =
      \left\{
      \mu \in \mathcal{D} \colon \mu (\reali_+) \leq R e^{-3 (B+C+E) ({T}-t)}
      \right\} \,.
    \end{array}
  \end{equation}
\end{proposition}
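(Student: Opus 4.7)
The natural strategy is to regard~\eqref{eq:30}, for each fixed $w \in \mathcal{W}$, as a non-autonomous measure valued balance law of the type treated in~\cite{MR2871800}, derive from that reference the existence of a solution operator $P^w(t,t_o)$ on $\mathcal{M}^+(\reali_+)$ whose orbits are weak solutions in the sense of Definition~\ref{def:MVBL}, and then verify that the three Lipschitz estimates~\eqref{eq:6}--\eqref{eq:7}--\eqref{eq:8} of Definition~\ref{def:LipProc} hold with the constants displayed in~\eqref{eq:35}. I would first choose $T>0$ small, depending only on $B$, $C$, $E$ and $R$, so that testing the weak formulation against $\phi \equiv 1$ yields the total mass bound $\mu(t,\reali_+) \leq R\,e^{2(B+C+E)T}$ and, sharpened via a Gronwall estimate running backwards from $T$, the tighter bound $\mu(t,\reali_+) \leq R\,e^{-3(B+C+E)(T-t)}$ required for the invariance $P^w(t,t_o)\mathcal{D}_{t_o}\subseteq\mathcal{D}_t$; positivity of $\mu(t,\cdot)$ is ensured by $b(t,\mu,w)(0)\geq0$ in~\ref{ip:MVBL1}. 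Uniqueness of the weak solution then yields the process identities~\eqref{eq:process0}--\eqref{eq:process2}.

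For~\eqref{eq:6} I would fix $w$ and compare two solutions $\mu^1,\mu^2$ starting from $\mu_1,\mu_2\in\mathcal{D}_{t_o}$: subtracting the two weak formulations and testing against any $\phi\in\C1(\reali_+;\reali)$ with $\norma{\phi}_{\W1\infty}\leq1$, the contributions from $b$, $c$ and $\eta$ each produce, via the duality formula~\eqref{eq:31}, a term bounded by $(B+C+E)\,d_{\mathcal{M}}(\mu^1(t),\mu^2(t))$, so a standard Gronwall inequality yields the rate $C_u=3(B+C+E)$. Estimate~\eqref{eq:7} follows by integrating the weak formulation between $t_1$ and $t_2$ against a single $\phi$ with $\norma{\phi}_{\W1\infty}\leq1$: each of the three integrand terms is bounded by $(B+C+E)\,\mu(t,\reali_+)\leq(B+C+E)\,R\,e^{2(B+C+E)T}$, giving the claimed $C_t$.

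The new and principal step is~\eqref{eq:8}. Writing $\mu^i(t)=P^{w_i}(t,t_o)\mu_o$ and subtracting the weak formulations against a common $\phi$ with $\norma{\phi}_{\W1\infty}\leq1$, the right hand side splits into a ``same $w$'' piece, which by the argument for~\eqref{eq:6} is controlled by $3(B+C+E)\int_{t_o}^t d_{\mathcal{M}}(\mu^1(s),\mu^2(s))\,\d{s}$, plus a ``parameter mismatch'' piece of three terms where $b$, $c$ and $\eta$ are differenced at $w_1$ versus $w_2$. By~\ref{ip:MVBL1}--\ref{ip:MVBL3} and the mass bound above, each mismatch term is bounded by $\hat L\,d_{\mathcal{W}}(w_1,w_2)$ times a polynomial in $T$ and $R$. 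A Gronwall inequality then yields the exponential prefactor $e^{5(B+C+E)T}$, where the exponent $5$ rather than $3$ reflects the additional mass factor $e^{2(B+C+E)T}$ picked up when converting the integrated mismatch terms back into $d_{\mathcal{M}}$; the remaining polynomial in $T$, $B$, $C$, $E$ is absorbed into $C^*(T,B,C,E)$, giving $C_w=C^*(T,B,C,E)\,R\,\hat L\,e^{5(B+C+E)T}$ as in~\eqref{eq:35}.

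The main obstacle I anticipate is controlling the parameter mismatch of the nonlocal source $\int_{\reali_+}\eta(t,\mu,w)(y)\,\d\mu(y)$: since $\eta$ takes values in $(\BC\cap\W1\infty)(\reali_+;\mathcal{M}^+(\reali_+))$, the assumed Lipschitz bound on $\eta(\cdot,\mu,w)$ in $w$ is expressed in terms of the supremum over $y$ of $\norma{\eta(t,\mu,w_1)(y)-\eta(t,\mu,w_2)(y)}_{\mathcal{M}(\reali_+)}$, which must be re-dualized against the test function $\phi$ and then integrated in $y$ against $\mu(t,\cdot)$. Fubini and the duality formula~\eqref{eq:31} do this cleanly, but the careful tracking of mass factors there is what fixes the exponent $5$ and the form of the abstract constant $C^*$ in~\eqref{eq:35}.
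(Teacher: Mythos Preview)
Your plan is sound in spirit, but it is considerably more elaborate than what the paper actually does. The paper gives no proof at all: it simply states that the result is a direct consequence of \cite[Theorem~2.10]{MR2871800}, and that the constant $C^*$ in~\eqref{eq:35} is precisely the one appearing in item~(iv) of that theorem. In other words, assumptions~\ref{ip:MVBL1}--\ref{ip:MVBL3} are tailored so that, for each fixed $w\in\mathcal{W}$, the coefficients $b(\cdot,\cdot,w)$, $c(\cdot,\cdot,w)$, $\eta(\cdot,\cdot,w)$ satisfy the hypotheses of \cite[Theorem~2.10]{MR2871800}, and the three Lipschitz constants in~\eqref{eq:35} are read off directly from the a~priori estimates already proved there, together with the uniform-in-$w$ bounds $B$, $C$, $E$ and the $w$-Lipschitz bound $\hat L$.

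By contrast, you propose to re-derive those estimates by hand: testing the weak formulation against admissible $\phi$, running Gronwall for each of~\eqref{eq:6}--\eqref{eq:8}, and tracking how the mass factor $e^{2(B+C+E)T}$ combines with the growth rate $3(B+C+E)$ to produce the exponent $5(B+C+E)$ in $C_w$. This is essentially reproducing the content of \cite[Theorem~2.10]{MR2871800} rather than invoking it. Your sketch is broadly correct, though the phrase ``Gronwall estimate running backwards from $T$'' for the invariance of $\mathcal{D}_t$ is misleading: what is needed is simply the forward growth bound $\mu(t,\reali_+)\leq e^{3(B+C+E)(t-t_o)}\mu_o(\reali_+)$, which, applied to $\mu_o\in\mathcal{D}_{t_o}$, yields $\mu(t)\in\mathcal{D}_t$ by the design of $\alpha(t)=R\,e^{-3(B+C+E)(T-t)}$.
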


\noindent The proof is a direct consequence of~\cite[Theorem
2.10]{MR2871800} and, hence, it is omitted. In particular, $C^*$
in~(\ref{eq:35}) is the constant defined in~\cite[Item~(iv),
Theorem~2.10]{MR2871800}.

\begin{proposition}
  \label{prop:MEASBLCoup}
  Set $\mathcal{U} = \mathcal{M}^+ (\reali^n)$. Fix $T > 0$ and assume
  that~\ref{ip:MVBL1}--\ref{ip:MVBL2}--\ref{ip:MVBL3} hold. Let $P^u$
  be a Lipschitz process on $\mathcal{W}$, parametrised by
  $u \in \mathcal{U}$. Call
  $P\colon \mathcal{A} \to \reali^n \times \mathcal{W}$, with
  $P \equiv (P_1,P_2)$, the Process constructed in
  Theorem~\ref{thm:Metric} coupling $P^w$, found in
  Proposition~\ref{prop:MVBL}, and $P^u$. If
  $([t_o,T],t_o,u_o,w_o) \subseteq \mathcal{A}$, then the map
  \begin{equation}\label{def:muSoln}
    \begin{array}{ccccc}
      \mu
      & \colon
      & [t_o,T]
      & \to
      & \mathcal{M}^+ (\reali^n)
      \\
      &
      &t
      & \mapsto
      & P_1 (t,t_o) (\mu,w)
    \end{array}
  \end{equation}
  solves the measure valued balance law
  \begin{displaymath}
    \left\{
      \begin{array}{l@{\qquad}l}
        \partial_t \mu
        +
        \partial_x \left(\bar b (t,\mu) \, \mu\right)
        +
        \bar c (t,\mu) \, \mu
        =
        \int_{\reali_+} \left(\bar \eta (t,\mu)\right) (y) \, \d{\mu (y)}
        & t \in \hat I
        \\
        \mu (t_o)
        =
        \mu_o
      \end{array}
    \right.
  \end{displaymath}
  in the sense of Definition~\ref{def:MVBL}, where
  \begin{displaymath}
    \begin{array}{c}
      \bar b (t,\mu)
      =
      b\left(t, \mu, P_2 (t,t_o) (\mu_o,w_o)\right) \,,
      \qquad\qquad
      \bar c (t,\mu)
      =
      c\left(t, \mu, P_2 (t,t_o) (\mu_o,w_o)\right) \,,
      \\
      \bar \eta (t,\mu)
      =
      \eta\left(t, \mu, P_2 (t,t_o) (\mu_o,w_o)\right) \,.
    \end{array}
  \end{displaymath}
\end{proposition}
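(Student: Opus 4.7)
The plan is to apply Theorem~\ref{thm:Metric} directly to obtain the coupled global process $P = (P_1, P_2)$, and then verify that the curve $\mu(t) = P_1(t, t_o)(\mu_o, w_o)$ satisfies the weak formulation of Definition~\ref{def:MVBL} with the prescribed coefficients $\bar b, \bar c, \bar \eta$. The approach mirrors that used for Propositions~\ref{prop:ODEcoupled}, \ref{prop:RENCoup} and~\ref{prop:coupleIBVP2}: realize $\mu$ as the uniform limit of Euler polygonals associated with the coupled local flow $F$ of~\eqref{eq:36}, exploit the fact that on each polygonal subinterval the $w$-parameter is frozen, so that Definition~\ref{def:MVBL} applies directly via $P^w$ from Proposition~\ref{prop:MVBL}, and pass to the limit using the Lipschitz dependence of $b, c, \eta$ on $w$ guaranteed by~\ref{ip:MVBL1}--\ref{ip:MVBL3}.

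Concretely, fix $\varepsilon \in (0, \delta]$ and set $\tau_k = t_o + k \varepsilon$ for $k = 0, \ldots, N_\varepsilon$ with $\tau_{N_\varepsilon} = T$ (possibly after a truncation of the last step). Define the Euler polygonal $(\mu_\varepsilon, w_\varepsilon)$ for $F$ recursively by
\begin{displaymath}
  \mu_\varepsilon(t)
  =
  P^{w_\varepsilon(\tau_k)}(t, \tau_k) \, \mu_\varepsilon(\tau_k),
  \qquad
  w_\varepsilon(t)
  =
  P^{\mu_\varepsilon(\tau_k)}(t, \tau_k) \, w_\varepsilon(\tau_k),
  \qquad t \in [\tau_k, \tau_{k+1}],
\end{displaymath}
starting from $(\mu_\varepsilon(t_o), w_\varepsilon(t_o)) = (\mu_o, w_o)$. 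The tangency estimate~\eqref{eq:tangent} and the stability bound~\eqref{eq:Stability} of Theorem~\ref{thm:main}, together with the constants~\eqref{eqn:tangM} provided by Theorem~\ref{thm:Metric}, yield, uniformly in $t \in [t_o, T]$, both $d_{\mathcal{M}}(\mu_\varepsilon(t), \mu(t)) \to 0$ and $d_{\mathcal{W}}(w_\varepsilon(t), P_2(t, t_o)(\mu_o, w_o)) \to 0$ as $\varepsilon \to 0$.

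Applying Definition~\ref{def:MVBL} to $P^{w_\varepsilon(\tau_k)}$ on each $[\tau_k, \tau_{k+1}]$ and summing over $k$, the boundary contributions telescope and, for any test function $\phi \in (\C1 \cap \W1\infty)([t_o, T] \times \reali_+; \reali)$, one obtains an identity for $\mu_\varepsilon$ where the frozen parameter $w_\varepsilon(\tau_k)$ appears inside $b, c, \eta$ on each subinterval. The uniform mass bound $\mu_\varepsilon(t)(\reali_+) \leq R$ guaranteed by Proposition~\ref{prop:MVBL}, the Lipschitz-in-$w$ and H\"older-in-$t$ estimates of~\ref{ip:MVBL1}--\ref{ip:MVBL3}, and the above uniform convergences then allow replacement of $b(t, \mu_\varepsilon, w_\varepsilon(\tau_k))$ by $\bar b(t, \mu)$, and analogously for $c$ and $\eta$. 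Narrow continuity of $\mu$, inherited from the Lipschitz regularity of $P^w$ in time, handles the right-hand boundary term and yields exactly the weak formulation of Definition~\ref{def:MVBL} for the coupled equation.

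The main obstacle is the quantitative control of the error introduced by the piecewise-constant approximation of $w$. On each subinterval, estimate~\eqref{eq:7} applied to $P^{\mu_\varepsilon(\tau_k)}$ gives $d_{\mathcal{W}}(w_\varepsilon(t), w_\varepsilon(\tau_k)) \leq C_t \, \varepsilon$, which feeds into the Lipschitz bounds of~\ref{ip:MVBL1}--\ref{ip:MVBL3} to produce an $\O \cdot \varepsilon^\alpha$ error per subinterval; summing these errors gives a total bound of order $\O \cdot \varepsilon^\alpha (T - t_o)$, vanishing in the limit. The interaction term involving $\eta$ is the most delicate, since it couples the $x$- and $y$-integrations; here, the uniform $(\BC \cap \W1\infty)(\reali_+; \mathcal{M}^+(\reali_+))$ bound from~\ref{ip:MVBL3} is essential to control $\phi(t, \cdot)$ integrated against the kernel $\eta(t, \mu, w)(y)$ uniformly in $y$, and to take the limit of the double integral.
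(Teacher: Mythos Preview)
Your overall strategy --- discretize time, use that on each subinterval the $w$-parameter is frozen so Proposition~\ref{prop:MVBL} applies, then pass to the limit via the Lipschitz-in-$w$ bounds of~\ref{ip:MVBL1}--\ref{ip:MVBL3} --- is the same as the paper's. There is one organizational difference: you work with the Euler polygonal $(\mu_\varepsilon, w_\varepsilon)$ of the coupled local flow and rely on its uniform convergence to $P$, so your boundary terms genuinely telescope. The paper instead takes the exact coupled process at the grid points, $(\tilde\mu_i,\tilde w_i) = P(t_{i-1},t_o)(\mu_o,w_o)$, and compares $P$ with a single application of $F$ on each subinterval via the tangency estimate~\eqref{eq:tangent}; the resulting ``telescoping'' sum then has nonzero gaps $\mu_{i,F}(t_i)-\tilde\mu_{i+1}$ that are controlled again by~\eqref{eq:tangent}. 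Both routes work and lead to the same estimates.

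There is one genuine gap in your proposal: the narrow continuity of $t\mapsto\mu(t)$ required by Definition~\ref{def:MVBL} is \emph{not} ``inherited from the Lipschitz regularity of $P^w$ in time''. Lipschitz continuity of $P$ is in the metric $d_{\mathcal M}$ of~\eqref{eq:31}, which tests only against $\C1\cap\W1\infty$ functions, whereas narrow continuity demands continuity against all bounded continuous $\psi$. The paper addresses this in a separate Lemma (Lemma~\ref{lem:muNarCont}): approximate $\psi\in\BC(\reali_+;\reali)$ by $\psi_\eta\in\C1\cap\W1\infty$ with $\norma{\psi_\eta}_{\W1\infty}\le(1+2/\eta)\norma{\psi}_{\L\infty}$ (Lemma~\ref{lem:BCapprox}), use the uniform mass bound $\mu(t)(\reali_+)\le R$ together with dominated convergence for the $\psi-\psi_\eta$ part, and the $d_{\mathcal M}$-Lipschitz bound for the $\psi_\eta$ part. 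Without this step the requirement of Definition~\ref{def:MVBL} is not verified. A minor aside: your ``$\O\cdot\varepsilon^\alpha$ error per subinterval'' is not the right accounting --- the H\"older exponent $\alpha$ plays no role here, the relevant pointwise error is $\hat L\, d_{\mathcal W}(w_\varepsilon(t),w_\varepsilon(\tau_k))\le \hat L\, C_t\,\varepsilon$, giving $O(\varepsilon^2)$ per subinterval and $O(\varepsilon)$ in total.
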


\noindent The proof is deferred to \S~\ref{subsec:proof-MVBL}.

\subsection{Scalar NonLinear Conservation Laws}
\label{subsec:scal-cons-laws}

We now consider the following scalar nonlinear conservation law in one
space dimension:
\begin{equation}
  \label{eqn:consLaw}
  \begin{cases}
    \partial_t u + \partial_x f(t, u, w) = 0
    & (t,x) \in \hat I \times \reali,
    \\
    u(t_o,x) = u_o (x)
    & x \in \reali
  \end{cases}
\end{equation}
for $t_o \in \hat I$, $u_o \in \L1(\reali;\reali)$,
$w \in \mathcal{W}$, with
$f \colon \hat I \times \reali \times \mathcal{W} \to \reali$ a
given function.

\begin{definition}
  \label{def:CL}
  Fix $w \in \mathcal{W}$ and $[t_o,T] \subseteq \hat I$. We say
  that a map $u \in \C0\left([t_o,T];\L1(\reali;\reali)\right)$
  is a solution to problem~\eqref{eqn:consLaw} if it is a Kru\v
  zkov-Entropy solution, i.e.
  \begin{multline}
    \label{eq:39}
    \int_{t_o}^T \int_\reali \left[ \modulo{u-k} \, \partial_t \phi +
      \sign(u-k) \, \left(f(t, u, w) - f(t, k, w) \right) \,
      \partial_x \phi \right] \d{x} \d{t}
    \\
    \geq \int_\reali \modulo{u(T, x) - k} \, \phi(T,x) \d{x} -
    \int_\reali \modulo{u_o(x) - k} \, \phi(t_o,x) \d{x},
  \end{multline}
  for all non-negative test functions
  $\phi \in \Cc\infty(\hat I \times \reali; \reali_+)$, and for all
  $k \in \reali$.
\end{definition}

\begin{proposition}
  \label{prop:CL}
  Let $R>0$ and $t_o,T$ be such that $[t_o,T] \subseteq \hat
  I$. Choose $\mathcal{U} = \L1 (\reali; \reali)$ and define
  $\mD = \left\{ u \in \mathcal{U} \colon \tv(u) \leq R
  \right\}$. Consider the Cauchy problem
  \begin{equation}
    \label{eq:38}
    \begin{cases}
      \partial_t u + \partial_x f(u, w) = 0 & (t,x) \in [t_o, T]
      \times \reali,
      \\
      u(t_o,x) = u_o (x) & x \in \reali
    \end{cases}
  \end{equation}
  under the assumptions
  \begin{enumerate}[label=\bf{(CL\arabic*)},
    ref=\textup{\textbf{(CL\arabic*)}}, left=0pt]

  \item \label{it:CL1} For all $w \in \mathcal{W}$, the map
    $u \mapsto f (u,w)$ is piecewise twice continuously
    differentiable.
  \item \label{it:CL2} There exists a positive $F_L$ such that for all
    $u_1,u_2 \in \reali$ and all $w,w_1,w_2 \in \mathcal{W}$
    \begin{eqnarray*}
      \modulo{f (u_1,w) - f (u_2,w)}
      & \leq
      & F_L \;\modulo{u_1-u_2}
      \\
      \Lip\left(f (\cdot, w_1) - f (\cdot, w_2)\right)
      & \leq
      & F_L \; d_{\mathcal{W}} (w_1,w_2)
    \end{eqnarray*}
  \end{enumerate}
  \noindent Then, there exists a Lipschitz Process on
  $\L1 (\reali; \reali)$, pametrized by $\mathcal{W}$, whose orbits
  are solutions to~\eqref{eqn:consLaw} in the sense of
  Definition~\ref{def:CL}, with constants
  in~\eqref{eq:6}--\eqref{eq:7}--\eqref{eq:8}
  \begin{displaymath}
    C_u = 0\,,\quad
    C_t = F_L \, R\,,\quad
    C_w = F_L \, R \,,\quad
    \mD_t = \mD \,.
  \end{displaymath}
\end{proposition}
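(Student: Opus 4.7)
The plan is to reduce the problem to the classical Kru\v zkov theory for scalar conservation laws with a parametric flux, and then extract each of the four ingredients of Definition~\ref{def:LipProc} from standard estimates. For fixed $w \in \mathcal{W}$, assumptions~\ref{it:CL1}--\ref{it:CL2} make the flux $u \mapsto f(u,w)$ piecewise twice differentiable and globally $F_L$-Lipschitz. Kru\v zkov's theorem (or, equivalently, front tracking) then yields, for every $u_o \in \mathcal{D}$, a unique global entropy solution $t \mapsto P^w(t,t_o) u_o$ in the sense of Definition~\ref{def:CL}. Since the total variation of an entropy solution is non-increasing, $\tv\bigl(P^w(t,t_o) u_o\bigr) \leq \tv(u_o) \leq R$, which justifies the choice $\mathcal{D}_t = \mathcal{D}$ and yields property~\eqref{eq:process1}; the identities~\eqref{eq:process0} and~\eqref{eq:process2} follow from uniqueness.

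Next I would verify the three Lipschitz bounds. Property~\eqref{eq:6} is the Kru\v zkov $\L1$ contraction, so $C_u = 0$. For~\eqref{eq:7}, the standard time regularity estimate
\begin{displaymath}
\norma{P^w(t_2,t_o)u - P^w(t_1,t_o)u}_{\L1(\reali;\reali)} \leq \Lip\bigl(f(\cdot,w)\bigr) \cdot \tv(u) \cdot \modulo{t_2 - t_1}
\end{displaymath}
combined with $\Lip\bigl(f(\cdot,w)\bigr) \leq F_L$ and $\tv(u) \leq R$ gives $C_t = F_L \, R$.

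The only genuinely non-trivial step is the flux-stability estimate~\eqref{eq:8}, and that is where I expect the main work. To obtain $C_w$, I would run the Kru\v zkov doubling-of-variables argument on the pair $u_i(t,\cdot) = P^{w_i}(t,t_o)u_o$ associated with fluxes $f(\cdot,w_1)$ and $f(\cdot,w_2)$ starting from the same initial datum. The key algebraic decomposition is to split $f(u_1,w_1) - f(u_2,w_2)$ as $\bigl(f(u_1,w_1) - f(u_2,w_1)\bigr) + \bigl(f(u_2,w_1) - f(u_2,w_2)\bigr)$; after localising the test function on the diagonal, the first summand yields a non-positive Kru\v zkov-type contribution, while the second is controlled by $\Lip\bigl(f(\cdot,w_1) - f(\cdot,w_2)\bigr) \leq F_L \, d_{\mathcal{W}}(w_1,w_2)$ integrated against the spatial derivative of the test function, whose mass is bounded by $\tv\bigl(u_2(t,\cdot)\bigr) \leq R$. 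Integration in time then produces
\begin{displaymath}
\norma{P^{w_1}(t,t_o)u_o - P^{w_2}(t,t_o)u_o}_{\L1(\reali;\reali)} \leq F_L \, R \, (t-t_o) \, d_{\mathcal{W}}(w_1,w_2) \,,
\end{displaymath}
which is~\eqref{eq:8} with $C_w = F_L \, R$. A front-tracking argument \`a la Bressan would give the same bound by approximating each flux by a piecewise linear one, estimating the $\L1$ difference of the two approximate solutions through interaction estimates, and passing to the limit. With all four constants in hand, the map $(t,t_o,u) \mapsto P^w(t,t_o)u$ is a Lipschitz process on $\L1(\reali;\reali)$ parametrized by $\mathcal{W}$ with the asserted constants, which completes the plan.
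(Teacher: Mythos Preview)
Your proposal is correct and matches the paper's approach: the paper itself does not give a detailed proof but simply states that the result ``is classical and follows, for instance, from~\cite[Theorem~2.14 and Theorem~2.15]{HoldenFT}'', i.e., the front-tracking existence/uniqueness and stability theorems for scalar conservation laws. Your outline of the $\L1$ contraction for $C_u=0$, the time-regularity bound via $\tv$ for $C_t$, and the flux-stability estimate (either via Kru\v zkov doubling of variables or via front tracking) for $C_w$ is exactly what those cited results provide, so you have in fact supplied more detail than the paper does.
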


\noindent The proof is classical and follows, for instance,
from~\cite[Theorem~2.14 and Theorem~2.15]{HoldenFT}.

\begin{remark}
  \label{rem:CL}
  \rm The present treatment is limited to \emph{homogeneous}, i.e.,
  with a flux independent of $x$, conservation laws. Note that general
  $2\times2$ systems of conservation laws can \emph{not} be approached
  by means of Theorem~\ref{thm:Metric} while, for instance, we do
  comprehend a non local coupling of the form
  \begin{displaymath}
    \left\{
      \begin{array}{l}
        \partial_t u + \partial_x f \left(u,\int_{\reali}w \d{x}\right) = 0
        \\
        u (0,x) = u_o (x)
      \end{array}
    \right.
    \qquad\qquad  \left\{
      \begin{array}{l}
        \partial_t w + \partial_x g\left(w, \int_{\reali} u\, \d{x}\right) = 0
        \\
        w (0,x) = w_o (x) \,.
      \end{array}
    \right.
  \end{displaymath}
\end{remark}

\begin{proposition}
  \label{prop:CLcoupling}
  Set $\mathcal{U} = \L1 (\reali; \reali)$.  Assume
  that~\ref{it:CL1}--\ref{it:CL2} hold. Let $P^u$ be a Lipschitz
  process on $\mathcal{W}$, parametrised by $u \in \mathcal{U}$.  Call
  $P\colon \mathcal{A} \to \reali^n \times \mathcal{W}$, with
  $P \equiv (P_1,P_2)$, the Process constructed in
  Theorem~\ref{thm:Metric} coupling $P^w$, generated by~\eqref{eq:38},
  to $P^u$. If $([t_o,T],t_o,u_o,w_o)$ $\subseteq \mathcal{A}$, then
  \begin{displaymath}
    \begin{array}{@{}c@{\,}c@{\,}c@{\,}c@{\,}c@{}}
      u
      & \colon
      & [t_o,T]
      & \to
      & \L1 (\reali;\reali)
      \\
      &
      &t
      & \mapsto
      & P_1 (t,t_o) (u_o , w_o)
    \end{array}
    \qquad \mbox{ solves }\qquad
    \left\{
      \begin{array}{l}
        \partial_t u + \partial_x \bar f(t,u) = 0
        \vspace{.2cm}
        \\
        u(t_o) = u_o,
      \end{array}
    \right.
  \end{displaymath}
  in the sense of Definition~\ref{def:CL}, where
  $\bar f (t,u) = f\left(u, P_2 (t,t_o) (u_o,w_o)\right)$.
\end{proposition}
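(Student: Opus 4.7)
The plan is a time-stepping argument anchored on the tangency estimate~\eqref{eq:tangent} of Theorem~\ref{thm:main}, which in the present coupling reads
\begin{displaymath}
  \norma{P_1(s+\tau,s)(v,\tilde w) - P^{\tilde w}(s+\tau,s)\,v}_{\L1}
  \leq
  \frac{2L\,C_t\,C_u}{\ln 2}\,\tau^2
\end{displaymath}
for every admissible $(s,\tau,v,\tilde w)$, since $\omega(\tau)=C_t C_u\tau$ and $L=e^{(C_u+C_w)T}$ by~\eqref{eqn:tangM}. Write $w(t):=P_2(t,t_o)(u_o,w_o)$. First, I would fix a partition $t_o=s_0<s_1<\cdots<s_N=T$ of mesh $h=(T-t_o)/N$, set $u_i=P_1(s_i,t_o)(u_o,w_o)$, $w_i=w(s_i)$, and introduce the piecewise approximations $u^N(t):=P^{w_i}(t,s_i)u_i$ and $w^N(t):=w_i$ for $t\in[s_i,s_{i+1})$. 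By Proposition~\ref{prop:CL}, on each $[s_i,s_{i+1}]$ the orbit $u^N$ is a Kru\v zkov entropy solution of the autonomous conservation law with flux $f(\cdot,w_i)$, and its total variation stays under $R$ since $\mathcal{D}_t=\mathcal{D}$.

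Next, I would exploit the tangency estimate and the process property $u_{i+1}=P_1(s_{i+1},s_i)(u_i,w_i)$ to derive the two key approximation bounds
\begin{displaymath}
  \sup_{t\in[t_o,T]}\norma{u(t)-u^N(t)}_{\L1}=O(h^2),
  \qquad
  \sum_{i=1}^{N-1}\norma{u^N(s_i^-)-u^N(s_i^+)}_{\L1}=O(h),
\end{displaymath}
together with $\sup_t d_{\mathcal{W}}(w^N(t),w(t))\leq C_t h$, which follows from~\eqref{eq:7} applied to $P^u$. The first bound comes from applying the tangency on each subinterval at $(s_i,h,u_i,w_i)$; the second from observing that the individual jump $u^N(s_i^+)-u^N(s_i^-)=u_i-P^{w_{i-1}}(s_i,s_{i-1})u_{i-1}$ has $\L1$ norm $O(h^2)$, and there are $N-1\sim 1/h$ such jumps.

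Then I would write the Kru\v zkov inequality~\eqref{eq:39} on each $[s_i,s_{i+1}]$ and sum over $i$. The flux contributions assemble into $\sign(u^N-k)\bigl(f(u^N,w^N)-f(k,w^N)\bigr)\partial_x\phi$, the end-point integrals at interior nodes produce a telescoping error bounded by $\norma{\phi}_{\L\infty}\sum_i\norma{u^N(s_i^-)-u^N(s_i^+)}_{\L1}=O(h)$, and the initial/terminal integrals at $t_o$ and $T$ survive with $u^N(t_o)=u_o$ and $u^N(T)\to u(T)$ in $\L1$. To pass to the limit $N\to\infty$, the only non-trivial ingredient is the Kru\v zkov flux, for which~\ref{it:CL2} yields
\begin{displaymath}
  \modulo{\sign(u^N-k)(f(u^N,w^N)-f(k,w^N))-\sign(u-k)(f(u,w)-f(k,w))}
  \leq
  F_L\modulo{u^N-u}+F_L\,d_{\mathcal{W}}(w^N,w)\modulo{u^N-k}.
\end{displaymath}
After multiplication by the compactly supported $\partial_x\phi$, both terms vanish in $\L1\bigl([t_o,T]\times\reali\bigr)$ by the uniform convergences derived above, yielding~\eqref{eq:39} for $u$ with flux $\bar f$.

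The main obstacle I anticipate is precisely this last passage to the limit, because $\sign(\cdot-k)$ does not converge strongly. What rescues the argument is the elementary observation that, for each fixed $w$, the map $u\mapsto\sign(u-k)(f(u,w)-f(k,w))$ is continuous (vanishes at $u=k$) and $F_L$-Lipschitz in $u$, so the bound above involves only Lipschitz-in-$u$ and Lipschitz-in-$w$ errors; the cross term $d_{\mathcal{W}}(w^N,w)\modulo{u^N-k}$ is exactly the reason the Lipschitz dependence of $f$ on $w$ was recorded in~\ref{it:CL2}, and it remains integrable against $\partial_x\phi$ thanks to the uniform $\L1_{\mathrm{loc}}$ bound on $u^N$ inherited from the Lipschitz process.
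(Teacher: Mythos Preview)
Your proposal is correct and follows essentially the same route as the paper: time discretization, approximation of $P_1$ on each subinterval by the autonomous solver $P^{w_i}$ (the local flow), control of the resulting errors via the tangency estimate~\eqref{eq:tangent}, summation of the Kru\v zkov inequalities with telescoping of the interior boundary terms, and passage to the limit through the Lipschitz bound on $u\mapsto\sign(u-k)(f(u,w)-f(k,w))$ in both arguments from~\ref{it:CL2}. The paper organises this into two explicit inequalities (a $\limsup$ bound comparing $P$ to $F$, and a $\liminf$ bound using the entropy inequality on each cell with a $\chi_\eps$ cutoff to extract endpoint terms), whereas you front-load the uniform $O(h^2)$ and summed $O(h)$ estimates and then pass to the limit; the substance is the same, and your only imprecision is attributing the time-Lipschitz bound on $w(t)=P_2(t,t_o)(u_o,w_o)$ to~\eqref{eq:7} rather than to the Lipschitz continuity of the coupled process $P$ granted by Theorem~\ref{thm:main}.
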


\noindent The proof is left until \S~\ref{sec:proofs-s-refs-1}.

\section{Specific Coupled Problems}
\label{sec:coupled-problems}

The abstract framework developed in
Section~\ref{sec:defin-abstr-results}, thanks to the proofs in the
subsequent paragraphs, allows to prove the Lipschitz well posedness of
several models.

As a first example, consider the model introduced in~\cite{MR2765683},
where a large and slow vehicle positioned at $y = y (t)$ affects the
overall traffic density $\rho = \rho (t,x)$. The resulting
model~\cite[Formula~(2.1)]{MR2765683} consists in the coupling of the
Lighthill-Whitam~\cite{LighthillWhitham} and Richards~\cite{Richards}
macroscopic model decribing the evolution of $\rho$ coupled with an
ordinary differential equation for $y$, that is
\begin{equation}
  \label{eq:73}
  \left\{
    \begin{array}{l}
      \partial_t \rho + \partial_x f\left(x,y (t),\rho\right) = 0
      \\
      \dot y = w\left(\rho (t,y)\right)
    \end{array}
  \right.
\end{equation}
Clearly, this coupled problem fits in Theorem~\ref{thm:Metric} thanks
to Proposition~\ref{prop:CLcoupling} and
Proposition~\ref{prop:ODEcoupled}, once the functions $f$ and $w$ meet
reasonable requirements.

In the next paragraphs, we consider in particular the case of a
predator--prey system (\S~\ref{subsec:consensus-game}) and that of an
epidemiological model (\S~\ref{subsec:an-epid-model}). To our
knowledge, this latter well posedness is first proved here.

\subsection{Predators and Prey}
\label{subsec:consensus-game}

On the basis of the games introduced in~\cite{MR4030510} we consider
the following predator--prey model:
\begin{equation}
  \label{eq:32}
  \!\!\!
  \left\{
    \begin{array}{@{}l}
      \displaystyle
      \partial_t \rho
      +
      \div_x \left(
      \rho \;
      V\!\left(t, x, p (t)\right)
      \right)
      =
      - \eta \left(\norma{p (t) - x} \right)\, \rho (t,x)
      \\
      \rho (0,x) = \bar\rho (x)
    \end{array}
  \right.
  \; \mbox{ where } \;
  \left\{
    \begin{array}{@{}l}
      \dot p = U \left(t, p, \rho (t)\right)
      \\
      p (0) = \bar p
    \end{array}
  \right.
\end{equation}
We consider a specific example, letting $\rho = \rho (t,x)$ be the
density of some prey species moving in $\reali^N$ and $p = p (t)$ be
the position in $\reali^N$ of a predator hunting it. To escape the
predator, prey adopt a strategy defined by the speed
\begin{equation}
  \label{eq:55}
  V (t,x,p)
  =
  - \dfrac{p-x}{\alpha+\norma{p-x}^2} \;
  \psi \left(\norma{p-x}^2\right)
\end{equation}
where the term $\dfrac{p-x}{\alpha+\norma{p-x}^2}$ stands for the
escape direction of the prey. The positive term $\alpha$ in the
denominator smooths the normalization. The function $\psi$ describes
the relevance of the predator $p$ to the prey at $x$ as a function of
the distance $\norma{p-x}$. The function
$\eta = \eta \left(\norma{p-x}\right)$ describes the effect of the
feeding of the predator at $p$ on the prey at $x$.  On the other hand,
the predator hunts moving towards the region of highest (mean) prey
density, i.e., with speed
\begin{equation}
  \label{eq:56}
  U (t,p,\rho)
  =
  \left( \nabla \phi *\rho\right) (p) \,,
\end{equation}
where $\phi$ is an averaging kernel.

Here, we show that~\eqref{eq:32} fits in the general framework
presented in Section~\ref{sec:defin-abstr-results}. Indeed, with
reference to~\S~\ref{subsec:IVP}, set
\begin{equation}
  \label{eq:33}
  \begin{array}{rcl}
    \mathcal{U}
    & =
    & \L1 (\reali^N; \reali)\,,
    \\
    \mathcal{W}
    & =
    & \reali^N\,,
  \end{array}
  \qquad
  \begin{array}{rcl}
    u
    & =
    & \rho \,,
    \\
    w
    & =
    & p \,,
  \end{array}
  \qquad
  \begin{array}{rcl}
    v (t,x,w)
    & =
    & V (t,x,w) \,,
    \\
    m (t,x,w)
    & =
    & -\eta \left(\norma{w-x}\right) \,,
    \\
    q (t,x,w)
    & =
    & 0 \,,
  \end{array}
\end{equation}
while with reference to~\S~\ref{subsec:ordin-diff-equat}, set
\begin{equation}
  \label{eq:34}
  \begin{array}{rcl}
    \mathcal{U}
    & =
    & \reali^N \,,
    \\
    \mathcal{W}
    & =
    & \L1 (\reali^N; \reali) \,,
  \end{array}
  \qquad
  \begin{array}{rcl}
    u
    & =
    & p \,,
    \\
    w
    & =
    & \rho \,,
  \end{array}
  \qquad
  f (t,u,w) = U (t,u,w) \,.
\end{equation}

\begin{proposition}
  \label{lem:consensus}
  Fix positive $\alpha,r_\rho, r_p, r_\eta$ and mollifiers
  \begin{description}
  \item[(V)] Let $V$ be as in~\eqref{eq:55} with
    $\psi \in \Cc\infty (\reali^N;\reali_+)$, with
    $\spt \psi \subseteq B (0,r_\rho)$ and
    $\int_{B (0,r_\rho)} \psi \d\xi=1$.

  \item[(U)] Let $U$ be defined in~\eqref{eq:56} with
    $\phi \in \Cc\infty (\reali;\reali)$, positive, with
    $\spt \phi \subseteq [-r_p,r_p]$ in~\eqref{eq:56}.

  \item[($\mathbf\eta$)]$\eta \in \Cc\infty (\reali^N; \reali)$,
    positive, with $\spt \eta \subseteq B (0, r_\eta)$.
  \end{description}
  Then, conditions~\ref{ip:(V)}--\ref{ip:(P)}--\ref{ip:(Q)}
  and~\ref{it:ODE1}--\ref{it:ODE2} are all satisfied. Therefore,
  model~\eqref{eq:32} defines a unique global process in the sense of
  Definition~\ref{def:Global}.
\end{proposition}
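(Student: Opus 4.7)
The plan is to verify each of the five hypotheses \ref{ip:(V)}, \ref{ip:(P)}, \ref{ip:(Q)}, \ref{it:ODE1}, \ref{it:ODE2} directly from the smoothness and compact support of $\psi$, $\phi$ and $\eta$, and then combine Proposition~\ref{prop:RE} (for the prey equation, parametrized by $p \in \reali^N$), Proposition~\ref{prop:ODE} (for the predator ODE, parametrized by $\rho \in \L1$) and Theorem~\ref{thm:Metric} to obtain the unique global process for~\eqref{eq:32}.

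For the prey equation I would set $v(t,x,p) = V(t,x,p)$, $m(t,x,p) = -\eta(\norma{p-x})$ and $q \equiv 0$. The key observation is that, since $\psi$ and $\eta$ are smooth and compactly supported and $\alpha>0$, both $(x,p) \mapsto V(x,p)$ and $(x,p) \mapsto \eta(\norma{p-x})$ are $\C\infty$, depend only on $p-x$, and for each fixed $p$ have $x$-support contained in $\overline{B(p,\sqrt{r_\rho})}$ and $\overline{B(p,r_\eta)}$ respectively. Hence $V$, $\nabla_x V$, $\nabla_x\cdot V$ and $\nabla_x(\nabla_x\cdot V)$ are bounded in sup-norm uniformly in $(t,x,p)$, with the last three also bounded in $\L1_x$ uniformly in $p$ by comparison with the volume of the support; the analogous observation handles the $\L\infty$ and $\tv$ bounds on $m$. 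The Lipschitz estimates on $v$ and $m$ with respect to $p$ demanded in \ref{ip:(V)} and \ref{ip:(P)} follow from the integral representation
\begin{displaymath}
  V(t,x,p_1) - V(t,x,p_2)
  =
  \left(\int_0^1 \partial_p V\bigl(t,x,p_2 + s(p_1-p_2)\bigr) \d{s}\right)(p_1-p_2),
\end{displaymath}
and analogous formulas for $\nabla_x \cdot V$ and $\eta(\norma{\cdot - p})$, using that the corresponding $p$-derivatives are smooth and compactly supported in $p-x$. Condition \ref{ip:(Q)} is trivial with $q \equiv 0$.

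For the predator ODE I would set $f(t,u,w) = U(t,u,w) = (\nabla\phi \ast w)(u)$. Condition~\ref{it:ODE1} is immediate since $U$ has no explicit $t$ dependence, while \ref{it:ODE2} follows from
\begin{displaymath}
  \norma{U(u,w)}
  \leq
  \norma{\nabla\phi}_{\L\infty} \, \norma{w}_{\L1},
  \qquad
  \norma{U(u_1,w) - U(u_2,w)}
  \leq
  \norma{\nabla^2\phi}_{\L\infty} \, \norma{w}_{\L1} \, \norma{u_1-u_2},
\end{displaymath}
together with $\norma{U(u,w_1) - U(u,w_2)} \leq \norma{\nabla\phi}_{\L\infty} \norma{w_1-w_2}_{\L1}$. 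To obtain a uniform constant $F_\infty$ one restricts the ODE parameter space to the $\L1$-bounded set on which the prey process of Proposition~\ref{prop:RE} takes values; this matching of parameter spaces is the main delicate point, since the nonlocality of $\nabla\phi \ast w$ prevents a bound uniform over all of $\L1$. With the five hypotheses verified on compatible parameter spaces, Theorem~\ref{thm:Metric} then couples the two Lipschitz processes and produces the unique global process for~\eqref{eq:32}.
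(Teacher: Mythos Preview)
Your proposal is correct and follows essentially the same route as the paper: direct verification of \ref{ip:(V)}--\ref{ip:(P)}--\ref{ip:(Q)} from the smoothness and compact support of $V$ and $\eta$ (the paper bounds $\nabla_p\nabla_x\cdot V$ on the support rather than writing out the integral representation, but the content is the same), verification of \ref{it:ODE1}--\ref{it:ODE2} via the convolution bounds you state, and then coupling through Theorem~\ref{thm:Metric}. The only point to add is that the paper also invokes Proposition~\ref{prop:RENCoup} and Proposition~\ref{prop:ODEcoupled} at the end, to confirm that the orbits of the process produced by Theorem~\ref{thm:Metric} actually solve the coupled system~\eqref{eq:32}; you should cite these as well, since Theorem~\ref{thm:Metric} alone only delivers an abstract process tangent to $F$.
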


\begin{proof}
  Consider first~\ref{ip:(V)}. By~\eqref{eq:55}, $V$ is a smooth
  function and the exponential factor ensures all the required
  boundedness conditions. We also have that
  $\norma{\nabla_p V}_{\L\infty (\reali_+ \times \reali^N \times
    \reali^N; \reali^{N\times N})}$ is bounded, proving the first
  Lipschitz requirement in~\ref{ip:(V)}. Prove now the latter
  inequality:
  \begin{eqnarray*}
    &
    & \int_{\reali^N}
      \modulo{\nabla_x \cdot \left(V (t,x,p_1) - V (t,x,p_2)\right)} \d{x}
    \\
    & =
    & \int_{\reali^N}
      \modulo{\nabla_x \cdot V (t,x,p_1) - \nabla_x \cdot V (t,x,p_2)} \d{x}
    \\
    & =
    & \int_{B (p_1,r_p)\cup B (p_2,r_p)}
      \modulo{\nabla_x \cdot V (t,x,p_1) - \nabla_x \cdot V (t,x,p_2)} \d{x}
    \\
    & \leq
    & \int_{B (p_1,r_p)\cup B (p_2,r_p)}
      \sup_{p\in \reali^N}\norma{
      \nabla_p \nabla_x \cdot V (t,x,p)
      }
      \d{x}
      \; \norma{p_2-p_1}
  \end{eqnarray*}
  proving also the latter requirement in~\ref{ip:(V)}.

  To prove~\ref{ip:(P)}, compute:
  \begin{eqnarray*}
    \norma{m(t, \cdot, w)}_{\L\infty(\reali^n;\reali)}
    +
    \tv\left( m(t, \cdot, w) \right)
    & =
    & \max_{B (0,r_\eta)} \modulo{\eta}
      +
      \norma{\eta'}_{\L1 (B (0,r_\eta);\reali)} \,;
    \\
    \norma{m(t, \cdot, w_1) - m(t, \cdot, w_2)}_{\L1(\reali^n;\reali)}
    & \leq
    & \int_{B (w_1,r_\eta) \cup B (w_2,r_\eta)}
      \sup _{B (0,r_\eta)}\modulo{\eta'} \norma{w_2-w_1}\d{x}
    \\
    & \leq
    & \O \norma{\eta'}_{\L\infty (B (0,r_\eta); \reali)} \, \norma{w_2-w_1} \,.
  \end{eqnarray*}
  Clearly, due to~\eqref{eq:33}, \ref{ip:(Q)} is immediate.

  The regularity required in~\ref{it:ODE1} is immediate. Pass to the
  Lipschitz estimate:
  \begin{eqnarray*}
    &
    & \norma{U (t,p_1,\rho_1) - U (t,p_2,\rho_2)}
    \\
    & \leq
    & \norma{U (t,p_1,\rho_1) - U (t,p_1,\rho_2)}
      + \norma{U (t,p_1,\rho_2) - U (t,p_2,\rho_2)}
    \\
    & =
    & \norma{\left(\nabla \phi * (\rho_1 - \rho_2)\right) (p_1)}
      +
      \norma{\left(\nabla\phi * \rho_2\right) (p_1) - \left(\nabla\phi * \rho_2\right) (p_2)}
    \\
    & \leq
    & \norma{\nabla \phi * (\rho_1 - \rho_2)}_{\L\infty (\reali^N;\reali^N)}
      +
      \norma{\nabla^2 \phi * \rho_2}_{\L\infty (\reali^N;\reali^{N\times N})} \;
      \norma{p_1 - p_2}
    \\
    & \leq
    & \norma{\nabla \phi}_{\L\infty (\reali^N;\reali^N)} \;
      \norma{\rho_1 - \rho_2}_{\L1 (\reali^N;\reali)}
      +
      \norma{\nabla^2 \phi * \rho_2}_{\L\infty (\reali^N;\reali^{N\times N})} \;
      \norma{p_1 - p_2} \,.
  \end{eqnarray*}
  Finally, the latter boundedness in~\ref{it:ODE2} is proved as
  follows:
  \begin{eqnarray*}
    \sup_{\rho \in \mathcal{D}_\rho} \norma{U (\cdot, \cdot, \rho)}
    & \leq
    & \sup_{\rho \in \mathcal{D}_\rho}
      \norma{\nabla \phi}_{\L\infty (\reali^N; \reali^N)}
      \;
      \norma{\rho}_{\L1 (\reali^N; \reali)}
  \end{eqnarray*}
  completing the proof by the definition of $\mathcal{D}_\rho$.

  By Proposition~\ref{prop:RE}, the balance law in~\eqref{eq:32}
  defines a global process $P_1$. Similarly,
  Proposition~\ref{prop:ODE} ensures that the ordinary differential
  equation in~\eqref{eq:32} generates a global process $P_2$. Now,
  Proposition~\ref{prop:RENCoup} and Proposition~\ref{prop:ODEcoupled}
  ensure that the global process $P$ obtained from $P_1$ and $P_2$
  through Theorem~\ref{thm:Metric} yields a solution to the coupled
  problem~\eqref{eq:32}.
\end{proof}

\subsection{Modeling Vaccination Strategies}
\label{subsec:an-epid-model}

Consider the model presented in~\cite[\S~2]{ColomboMarcelliniRossi}:
\begin{equation}
  \label{eq:12}
  \left\{
    \begin{array}{r@{\;}c@{\;}l@{}}
      \dot S
      & =
      & - \rho_S \, I \, S - p (t)
      \\
      \partial_t V + \partial_\tau V
      & =
      & - \rho_V \, I \, V
      \\
      \dot I
      & =
      & (\rho_S \, S + \int_0^{T_*} \rho_V \, V) I  - \theta \, I - \mu \, I
      \\
      \dot R
      & =
      & \theta \, I + V (t,T_*)
      \\
      V (t,0)
      & =
      & p(t) \,.
    \end{array}
  \right.
\end{equation}
It describes a population consisting of susceptibles, $S = S (t)$, of
infected that are also infective, $I = I (t)$, and recovered
individuals, $R = R (t)$. The vaccination rate is $p = p (t)$ and
vaccinated individuals need a time $T_*$ to get immunized. More
precisely, $V = V (t,\tau)$ is the number of individuals at time $t$
vaccinated at time $t-\tau$, for $\tau \in [0, T_*]$. Thus, at time
$T_*$, vaccinated individual enter the $R$ population.

The positive constants $\rho_S$, $\theta$ and $\mu$ quantify the
infectivity rate, the recovery rate and the mortality rate,
respectively. The function $\rho_V = \rho_V (\tau)$ describes the
infectivity rate of individuals vaccinated after time $\tau$ from
being dosed.

Note that model~\eqref{eq:12} is triangular, in the sense that the
evolution of the $R$ population results from that of the other ones,
without affecting them.

Model~\eqref{eq:12}, once the $R$ population is omitted, fits in the
abstract framework presented in
Section~\ref{sec:defin-abstr-results}. Indeed, with reference to the
notation used in~\S~\ref{subsec:ordin-diff-equat}, we pose
\begin{equation}
  \label{eq:13}
  \begin{array}{c}
    \mathcal{U}
    =
    \reali^2
    \,,\qquad
    \mathcal{W}
    =
    \L1 ([0,T_*];\reali)
    \,, \qquad
    u
    =
    \left[
    \begin{array}{c}
      S
      \\
      I
    \end{array}
    \right]
    ,\qquad
    w
    =
    V
    \,,
    \\
    f (t,u,w)
    =
    \left[
    \begin{array}{c}
      -\rho_S \, u_1 \, u_2 - p (t)
      \\
      \left(
      \rho_S\, u_1 + \int_0^{T_*}\rho_V (\tau) \, w (\tau) \d\tau
      -\theta-\mu
      \right)
      u_2
    \end{array}
    \right],
  \end{array}
\end{equation}
while with reference to~\S~\ref{subsec:IBVP2}, we set
\begin{equation}
  \label{eq:19}
  \begin{array}{r@{\,}c@{\,}l}
    \mathcal{U}
    & =
    & \L1 ([0,T_*];\reali)
    \\
    \mathcal{W}
    & =
    & \reali^2
  \end{array}
  \quad
  x = \tau
  \,,\quad
  u
  =
  V
  \,,\quad
  w
  =
  \left[
    \begin{array}{c}
      S
      \\
      I
    \end{array}
  \right]
  ,\quad
  \begin{array}{r@{\,}c@{\,}l}
    v (t,x)
    & =
    & 1
    \\
    m (t,x,w)
    & =
    & -\rho_V (x) \, w_2
    \\
    q (t,x,w)
    & =
    & 0
    \\
    b (t)
    & =
    & p (t) \,.
  \end{array}
\end{equation}

The well posedness of~\eqref{eq:12} now follows once we verify that
Proposition~\ref{prop:ODEcoupled}
and~Proposition~\ref{prop:coupleIBVP2} can be applied.

\begin{proposition}
  \label{prop:appl}
  Fix positive $r, T_*, \rho_S$ and choose
  $p \in \BV (\reali_+; \reali)$, $\rho_V \in \BV ([0,T_*];
  \reali)$. Then, problem~\eqref{eq:12} defines a unique global
  process $P$, in the sense of Definition~\ref{def:Global}, defined on
  all initial data
  \begin{equation}
    \label{eq:5}
    S_o, I_o, R_o \in [0,r]
    \quad \mbox{ and } \quad
    V_o \in \L1 ([0,T_*]; \reali_+)
    \mbox{ with } \tv (V_o) + \norma{V_o}_{\L\infty (\reali; \reali)}\leq r\,.
  \end{equation}
  $P$ is Lipschitz continuous as a function of time and of the initial
  data, with respect to the Euclidean norm in $(S_o,I_o,R_o)$ and to
  the $\L1$ norm in $V$.
\end{proposition}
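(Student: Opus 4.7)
My plan is to peel off the $R$-equation in~\eqref{eq:12}, which is driven by $(I,V)$ but does not feed back, and recognize the remaining four unknowns as the coupling, in the sense of Theorem~\ref{thm:Metric}, of two subsystems. Concretely, the pair $u=(S,I)$ obeys the ordinary differential equation~\eqref{eq:13} on $\mathcal{U}=\reali^2$, parametrized by $w=V\in\mathcal{W}=\L1([0,T_*];\reali)$, while $V$ obeys the linear initial--boundary value problem~\eqref{eq:19} on $\mathcal{U}=\L1([0,T_*];\reali)$, parametrized by $w=(S,I)\in\mathcal{W}=\reali^2$. Once Proposition~\ref{prop:ODE} applies to the former and Proposition~\ref{prop:IBVP2} to the latter, Proposition~\ref{prop:ODEcoupled} combined with Proposition~\ref{prop:coupleIBVP2} produces the coupled Lipschitz process whose orbits solve~\eqref{eq:12} (without the $R$ line). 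The $R$ component is then recovered a posteriori by direct integration, $R(t)=R_o+\int_{t_o}^{t}\left(\theta\,I(s)+V(s,T_*)\right)\d{s}$, which is Lipschitz in time and in the initial data since $V(\cdot,T_*)$ inherits suitable regularity from Proposition~\ref{prop:IBVP2}.

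To verify~\ref{it:ODE1}--\ref{it:ODE2} I would restrict $\mathcal{D}=\overline{B(0,R_*)}\subset\reali^2$ and $w$ to a ball in $\L1$-norm. Measurability of $t\mapsto f(t,u,w)$ is immediate, since $p\in\BV(\reali_+;\reali)$ is measurable and the rest of the $t$-dependence is either constant or enters linearly through $w$. The bilinear products $\rho_S u_1 u_2$ and $\left(\int_0^{T_*}\rho_V(\tau)\,w(\tau)\,\d\tau\right) u_2$ are jointly Lipschitz on the chosen bounded set, with constants controlled by $R_*$, $\rho_S$, $\theta+\mu$ and $\norma{\rho_V}_{\L\infty}$; the $\L\infty$ bound $F_\infty$ is finite. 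For~\ref{item:IBVP1}--\ref{item:IBVP4} the velocity $v\equiv 1$ trivially meets~\ref{item:IBVP1} with $\check v=\hat v=1$ and $V_L=0$. The zeroth order coefficient $m(t,x,(S,I))=-\rho_V(x)\,I$ satisfies~\ref{item:IBVP2}: the $\BV$ hypothesis on $\rho_V$ controls $\tv\!\left(m(t,\cdot,w)\right)+\norma{m(t,\cdot,w)}_{\L\infty(\reali_+;\reali)}$ by $\modulo{I}\bigl(\tv(\rho_V)+\norma{\rho_V}_{\L\infty}\bigr)$, and the dependence on $w=(S,I)$ is linear, hence Lipschitz with constant $\norma{\rho_V}_{\L1}$. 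The source $q\equiv 0$ trivially fulfils~\ref{item:IBVP3}, and the boundary datum $b=p\in\BV(\reali_+;\reali)$, taken in its left-continuous representative, fulfils~\ref{item:IBVP4}. Both subsystems thus generate Lipschitz processes on bounded parameter sets, and Theorem~\ref{thm:Metric} delivers a local-in-time coupled process consistent with~\eqref{eq:12}.

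The main obstacle is the passage from local to global in time: the horizon $T\leq R_*/(2F_\infty)$ from Proposition~\ref{prop:ODE} shrinks like $1/R_*$ because $F_\infty$ is quadratic in $R_*$, so Corollary~\ref{cor:global-existence-ODE} does not apply directly. I would close the argument by a truncation and a priori bound strategy. Multiplying the nonlinear coefficients in~\eqref{eq:13} and~\eqref{eq:19} by a smooth cutoff supported outside a large ball yields globally Lipschitz and bounded fields that meet~\ref{it:ODE1}--\ref{it:ODE2} and~\ref{item:IBVP1}--\ref{item:IBVP4} on all of $\mathcal{U}\times\mathcal{W}$; the truncated system therefore generates a global coupled process by Theorem~\ref{thm:Metric}. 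On any interval on which the truncated and untruncated trajectories agree, the functional $S+I+\int_0^{T_*}V\,\d\tau$ satisfies
\begin{equation*}
  \frac{\mathrm{d}}{\mathrm{d}t}\!\left(S+I+\int_0^{T_*}V(t,\tau)\,\d\tau\right)
  =
  -\rho_S\,I\,S-(\theta+\mu)\,I-V(t,T_*),
\end{equation*}
which, together with Gronwall-type bounds on $\norma{V(t,\cdot)}_{\L\infty}+\tv\!\left(V(t,\cdot)\right)$ driven by $\norma{p}_{\BV}$, $\norma{V_o}_{\L\infty}+\tv(V_o)$ and the already controlled $\modulo{I}$, confines the trajectory on every compact time interval to a set depending only on the constants of~\eqref{eq:5}. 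Choosing the cutoff radius beyond these a priori bounds, the truncated and the original problems coincide on the admissible data, yielding the global Lipschitz process claimed in Proposition~\ref{prop:appl}. Time and data stability transfer directly from~\eqref{eq:27},~\eqref{eq:IBVP2Const} and Theorem~\ref{thm:Metric}.
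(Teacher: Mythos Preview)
Your first two paragraphs reproduce the paper's argument essentially verbatim: split off the $R$ equation, identify~\eqref{eq:13} and~\eqref{eq:19} as the two parametrized subsystems, verify~\ref{it:ODE1}--\ref{it:ODE2} on a ball in $\reali^2$ and~\ref{item:IBVP1}--\ref{item:IBVP4} for the linear IBVP, then invoke Propositions~\ref{prop:ODEcoupled} and~\ref{prop:coupleIBVP2} through Theorem~\ref{thm:Metric}, and finally recover $R$ by direct integration of $\dot R=\theta I+V(t,T_*)$ using the explicit formula~\eqref{eq:rIBVP2} for $V$. This is exactly what the paper does.

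Your third paragraph, on the other hand, goes well beyond the paper. You read ``global process'' as a claim of existence for all $t\geq 0$ and then work to justify it via truncation and a priori bounds. In the paper's usage, however, ``global process'' is the technical object of Definition~\ref{def:Global}, defined on a fixed bounded interval $I=[0,T]$; the paper's proof is content with the local-in-time $T$ delivered by Propositions~\ref{prop:ODE} and~\ref{prop:IBVP2} and makes no attempt to push $T\to+\infty$. Your observation that $F_\infty$ grows quadratically in the radius, so Corollary~\ref{cor:global-existence-ODE} fails, is correct, and your cutoff-plus-invariance strategy is a reasonable route to genuine global-in-time existence---but it is extra work the paper neither claims nor carries out.
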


\begin{proof}
  Verifying~\ref{it:ODE1} is immediate. The Lipschitz continuity
  required in~\ref{it:ODE2} follows from the boundedness
  $u \in \mathcal{D}_{\mathcal{U}}$, which is a closed ball in
  $\mathcal{U} = \reali^2$ and from the choice of $\rho_V$, see
  \S~\ref{subsec:ordin-diff-equat}. Hence, Proposition~\ref{prop:ODE}
  applies.

  Conditions~\ref{item:IBVP1} and~\ref{item:IBVP3} are immediate. The
  first requirement in~\ref{item:IBVP2} follows from the choice of
  $\rho_V$ and the boundedness of $\mathcal{D}_{\mathcal{U}}$. The
  second is ensured by the linearity of $m$ and the boundedness of
  $\rho_V$. Since $p$ has bounded variation, \ref{item:IBVP4} is
  satisfied on any bounded time interval. Hence, also
  Proposition~\ref{prop:IBVP2} can be applied.

  Then, Proposition~\ref{prop:ODEcoupled}
  and~Proposition~\ref{prop:coupleIBVP2}, through
  Theorem~\ref{thm:Metric}, ensure the well posedness of the coupled
  system~\eqref{eq:13}--\eqref{eq:19}.

  We now verify the well posedness of the $R$
  component. From~\eqref{eq:12}, using~\eqref{eq:rIBVP2}, we have
  \begin{displaymath}
    V (t,\tau)
    =
    \left\{
      \begin{array}{l@{\quad\mbox{ if }}r@{\,}c@{\,}l}
        V_o (\tau+t_o-t) \,
        \exp\left(- \int_{t_o}^t \rho_V (s) \, I (s) \d{s}\right)
        & t
        & \leq
        & \tau + t_o\,,
        \\
        p (t-\tau) \,
        \exp\left(- \int_{t-\tau}^t \rho_V (s) \, I (s) \d{s}\right)
        & t
        & >
        & \tau + t_o\,.
      \end{array}
    \right.
  \end{displaymath}
  This shows that the map $t \mapsto V (t,T_*)$ is sufficiently
  regular for the equation for $R$, namely
  $\dot R = \theta\, I (t) + V (t,T_*)$, to be explicitly solved:
  $R (t) = R_o + \int_0^t \left(I (s) + V (s,T_*)\right) \d{s}$. Thus,
  the full model~\eqref{eq:12} is well posed.
\end{proof}

\section{Technical Details}
\label{sec:technical-details}

\subsection{Proofs for Section~\ref{sec:defin-abstr-results}}
\label{subsec:proofs-sect-refs}

\begin{proofof}{Theorem~\ref{thm:Metric}}
  We begin by showing $F$ is a local flow in the sense of
  Definition~\ref{def:local}. $F$ is continuous as it is a pairing of
  two continuous functions. Further
  \begin{displaymath}
    F(0, t_o)(u, w) = \left(P^w(t_o, t_o)u, P^u(t_o, t_o)w
    \right) = \left(u, w \right).
  \end{displaymath}
  We prove the Lipschitz continuity in time and with respect to
  initial conditions of $F$:
  \begin{eqnarray*}
    &
    & d\left(F(\tau_1, t_o)(u_1, w_1), F(\tau_2, t_o)(u_2, w_2)\right)
    \\
    & \leq
    & d_\spa\left(P^{w_1}(t_o + \tau_1, t_o)u_1,
      P^{w_1}(t_o + \tau_1, t_o)u_2\right)
      +
      d_\spa\left(P^{w_1}(t_o + \tau_1, t_o)u_2,
      P^{w_2}(t_o + \tau_1, t_o)u_2\right)
    \\
    &
    & + d_\spa\left(P^{w_2}(t_o + \tau_1, t_o)u_2,
      P^{w_2}(t_o + \tau_2, t_o)u_2\right)
    \\
    &
    & + d_\spb\left(P^{u_1}(t_o + \tau_1, t_o)w_1,
      P^{u_1}(t_o + \tau_1, t_o)w_2\right)
      + d_\spb\left(P^{u_1}(t_o + \tau_1, t_o)w_2,
      P^{u_2}(t_o + \tau_1, t_o)w_2\right)
    \\
    &
    & + d_\spb\left(P^{u_2}(t_o + \tau_1, t_o)w_2,
      P^{u_2}(t_o + \tau_2, t_o)w_2\right)
    \\
    & \leq
    & e^{C_u \tau_1} \, d_\spa(u_1, u_2)
      +
      C_w\, \tau_1 \, d_\spb(w_1, w_2)
      +
      C_t \, \modulo{\tau_1 - \tau_2}
    \\
    &
    & + e^{C_u\tau_1} \, d_\spb(w_1, w_2)
      +
      C_w \, \tau_1 \, d_\spa(u_1, u_2)
      +
      C_t \, \modulo{\tau_1 - \tau_2}
    \\
    & \leq
    & (e^{C_u \delta} + C_w \, \delta) \;
      d\left((u_1, w_1), (u_2,w_2)\right)
      + 2\, C_t \; \modulo{\tau_1 - \tau_2} \,.
  \end{eqnarray*}
  Thus $F$ is indeed a local flow in the sense of
  Definition~\ref{def:local}, with
  $\Lip (F) = e^{C_u \delta} + C_w\, \delta + 2C_t$.

  We now show that $F$ satisfies the assumptions of
  Theorem~\ref{thm:main}. Consider~\eqref{eq:k}:
  \begin{eqnarray}
    \nonumber
    &
    & d\left(F(k \tau, t_o + \tau) \circ F(\tau, t_o)(u, w),
      F((k+1)\tau, t_o)(u, w)\right)
    \\
    \label{eq:1}
    & =
    & d_\spa
      \left(
      P^{P^u(t_o + \tau,t_o)w}(t_o + (k+1)\tau, t_o + \tau)
      P^w(\tau, t_o)u,
      P^w \left(t_o + (k+1)\tau, t_o\right)u
      \right)
    \\
    \label{eq:2}
    & +
    & \!\!\! d_\spb \!
      \left(
      P^{P^w(t_o + \tau,t_o)u}(t_o + (k{+}1)\tau, t_o + \tau)
      P^u(t_o + \tau, t_o)w,
      P^u\left(t_o + (k{+}1)\tau, t_o\right)w
      \right).
  \end{eqnarray}
  We consider only the term~\eqref{eq:1}, since the latter is entirely
  similar. By~\eqref{eq:process2}, we have
  \begin{displaymath}
    P^w \left(t_o + (k+1)\tau, t_o\right)u
    =
    P^w(t_o + (k+1)\tau, t_o + \tau) \; P^w(t_o + \tau, t_o)u \,,
  \end{displaymath}
  hence, via~\eqref{eq:7} and~\eqref{eq:8},
  \begin{eqnarray}
    \nonumber
    &
    & d_\spa
      \left(
      P^{P^u(t_o + \tau,t_o)w}(t_o + (k+1)t, t_o + \tau)
      P^w(t_o + \tau, t_o)u,
      P^w\left(t_o + (k+1)\tau, t_o\right)u
      \right)
    \\
    \nonumber
    & =
    & d_\spa
      \big(
      P^{P^u(t_o + \tau,t_o)w}(t_o + (k+1)\tau, t_o + \tau)
      P^w(t_o + \tau, t_o)u,
    \\
    \nonumber
    && \qquad
       P^w(t_o + (k+1)\tau, t_o+\tau)P^w(t_o+\tau, t_o)u
       \big)
    \\
    \nonumber
    & \leq
    & C_w \; k \, \tau \; d_\spb\left(P^u(t_o + \tau, t_o)w, w\right)
    \\
    \label{eq:3}
    & \leq
    & k \, \tau \; C_t \, C_w \tau \,.
  \end{eqnarray}
  Combining~\eqref{eq:3} with the analogous estimate
  bounding~\eqref{eq:2}, we end up with
  \begin{displaymath}
    d\left(
      F(k\tau, t_o + \tau) \circ F(\tau, t_o)(u, w),
      F\left((k+1)\tau, t_o\right)(u, w)
    \right)
    \leq
    k\, \tau \; \omega (\tau)
  \end{displaymath}
  where $\omega$ is as in~\eqref{eqn:tangM}. Thus~\eqref{eq:k} is
  satisfied.

  We consider the second condition in Theorem~\ref{thm:main},
  namely~\eqref{eq:Stability}.  Note that Euler polygonals for the
  local flow $F$, see \Cref{def:Euler-polygonals}, can be written
  recursively, as
  \begin{displaymath}
    F^\eps(\tau, t_o)(u,w)
    =
    F(\tau - k\eps, t_o + k\eps) \circ F^\eps(k\eps, t_o)(u,w) \,.
  \end{displaymath}
  For any $\tau \in [0,\delta]$ and for any $(u, w)$,
  $(\bar{u}, \bar{w})$ in $\mathcal{U} \times \mathcal{W}$, we have
  \begin{eqnarray*}
    d\left(F(\tau, t_o)(u, w), F(\tau, t_o)(\bar{u}, \bar{w})\right)
    & =
    & d_\spa(P^w(t_o + \tau, t_o)u, P^{\bar{w}}(t_o + \tau, t_o)\bar{u})
    \\
    &
    & \quad
      + d_\spb(P^u(t_o + \tau, t_o)w,
      P^{\bar{u}}(t_o + \tau, t_o)\bar{w}) \,.
  \end{eqnarray*}
  For the first of these summands, by the triangle inequality, we have
  {%
    \begin{eqnarray*}
      &
      & d_\spa
        \left(
        P^w(t_o + \tau, t_o)u,
        P^{\bar{w}}(t_o + \tau, t_o)\bar{u}
        \right)
      \\
      & \leq
      & d_\spa
        \left(
        P^w(t_o + \tau, t_o)u, P^w(t_o + \tau, t_o)\bar{u}
        \right)
        +
        d_\spa
        \left(P^w(t_o + \tau, t_o)\bar u,
        P^{\bar{w}}(t_o + \tau, t_o)\bar{u}\right)
      \\
      & \leq
      & e^{C_u\tau} \, d_\spa(u, \bar{u}) + C_w \, \tau \; d_\spb(w, \bar{w}) \,.
    \end{eqnarray*}%
  } The second term is estimated analogously, leading to
  \begin{equation}
    \label{eq:4}
    d\left(F(\tau, t_o)(u, w), F(\tau, t_o)(\bar{u}, \bar{w})\right)
    \leq
    \left(e^{C_u \tau} + C_w \, \tau\right) \,
    d\left((u, w), (\bar{u}, \bar{w})\right) \,.
  \end{equation}
  Estimate~\eqref{eq:4} is of use in the following:
  \begin{eqnarray*}
    &
    & d\left(F^\eps(\tau, t_o)(u, w), F^\eps(\tau, t_o)(\bar{u}, \bar{w})\right)
    \\
    & =
    & d\left(
      F(\tau - k\eps, t_o + k\eps) F^\eps(k\eps, t_o)(u, w),
      F(\tau - k\eps, t_o + k\eps) F^\eps(k\eps, t_o)(\bar{u}, \bar{w})
      \right)
    \\
    & \leq
    & \left(e^{C_u(\tau-k\eps)} + C_w \, (\tau-k\eps)\right) \,
      d\left(
      F^\eps(k\eps, t_o)(u, w), F^\eps(k\eps, t_o)(\bar{u}, \bar{w})
      \right) \,.
  \end{eqnarray*}
  It remains to estimate the distance in the latter right hand
  side. We have for any $k \in \naturali \setminus\{0\}$,
  \begin{displaymath}
    F^\eps(k\eps, t_o)(u, w) = F(\eps, t_o)
    \; F^\eps\left((k-1)\eps, t_o\right)(u, w),
  \end{displaymath}
  and thus using iteratively~\eqref{eq:4},
    \begin{eqnarray*}
      &
      & d\left(
        F^\eps(k\eps, t_o)(u, w),
        F^\eps(k\eps, t_o)(\bar{u}, \bar{w})\right)
      \\
      & \leq
      & \left(e^{C_u \eps} + C_w \, \eps\right) \,
        d\left(
        F^\eps((k-1)\eps, t_o)(u, w),
        F^\eps((k-1)\eps, t_o)(\bar{u}, \bar{w})
        \right)
      \\
      & \leq
      & \left(e^{C_u \eps} + C_w \, \eps\right)^k
        d\left((u, w), (\bar{u}, \bar{w})\right) \,.
    \end{eqnarray*}%
  Therefore,
  \begin{eqnarray*}
    &
    & d\left(F^\eps(\tau, t_o)(u, w), F^\eps(\tau, t_o)(\bar{u}, \bar{w})\right)
    \\
    & \leq
    & \left(e^{C_u(\tau-k\eps)} + C_w \, (\tau-k\eps)\right)
      \left(e^{C_u \eps} + C_w \, \eps\right)^k
      \;        d \left((u,w),(\bar u, \bar w)\right).
  \end{eqnarray*}
  Hence, \eqref{eq:Stability} is satisfied provided there exists a
  positive $L$ such that for all $ \eps > 0 $ and $t \in [0,T]$
  \begin{displaymath}
    \left(e^{C_u(\tau-k\eps)} + C_w \, (\tau-k\eps)\right)
    \left(e^{C_u \eps} + C_w \, \eps\right)^k    \leq
    L \,,
  \end{displaymath}
  where $k = \lfloor\frac{\tau}{\eps}\rfloor$. Indeed, since
  $e^a + b \leq e^{a+b}$ for all $a,b \in \reali_+$, we have
  \begin{displaymath}
    \left(e^{C_u(\tau-k\eps)} + C_w(\tau-k\eps)\right)
    \left(e^{C_u\eps} + C_w \, \eps\right)^k
    \leq
    e^{(C_u+C_w)(\tau-k\eps)}
    \left(e^{(C_u+C_w)\eps}\right)^k
    =
    e^{(C_u+C_w)\tau}
  \end{displaymath}
  so that $L = e^{(C_u+C_w)\delta}$.

  Finally, note that~\eqref{eq:77} directly follows from the
  definition~\eqref{eq:36} of $F$, together with the properties
  $P^w (t_o+\tau,t_o) \mathcal{D}_{t_o}^{\mathcal{U}} \subseteq
  \mathcal{D}_{t_o+\tau}^{\mathcal{U}}$, which holds for all
  $w \in \mathcal{W}$, and
  $P^u (t_o+\tau,t_o) \mathcal{D}_{t_o}^{\mathcal{W}} \subseteq
  \mathcal{D}_{t_o+\tau}^{\mathcal{W}}$, which holds for all
  $u \in \mathcal{U}$. Therefore, with reference to~\eqref{def:D3}, we
  have
  $\mathcal{D}_{t_o}^3 \supseteq (\mathcal{D}^{\mathcal{U}}_{t_o}
  \times \mathcal{D}^{\mathcal{W}}_{t_o})$ and Condition~1.~in
  Theorem~\ref{thm:main} completes the proof of~\eqref{eq:78}.
\end{proofof}

\begin{proofof}{Theorem~\ref{thm:Metric2}}
  The continuity of $\hat F$ is immediate. The Lipschitz continuity
  follows from the triangle inequality and a Lipschitz constant is
  $\Lip (\hat F) = \mathcal{L} + \max \{\Lip (F^w) \,,$
  $\, \Lip (F^u)\}$. Hence, $\hat F$ is a local flow according to
  Definition~\ref{def:local}.

  Concerning the tangency condition, compute
  \begin{eqnarray*}
    \dfrac1\tau \, d\left(\hat F (\tau,t_o) (u,w),F (\tau,t_o) (u,w)\right)
    & =
    & \dfrac1\tau \,
      d_{\mathcal{U}}\left(
      F^w (\tau,t_o) u,
      P^w (t_o+\tau,t_o) u
      \right)
    \\
    &
    & +
      \dfrac1\tau \,
      d_{\mathcal{W}}\left(
      F^u (\tau,t_o) w,
      P^u (t_o+\tau,t_o) w
      \right)
  \end{eqnarray*}
  and the first order tangency condition~\eqref{eq:tangent} allows to
  complete the proof.
\end{proofof}

{
  \subsection{Proofs for Section~\ref{subsec:ordin-diff-equat}}
  \label{sub:ODE}

\begin{proofof}{Corollary~\ref{cor:global-existence-ODE}}
  For $k \in \naturali$, define $R_k = 2^k$ and
  $\hat{\mathcal D}^k = \overline{ B\left(0, R_k\right)}$.
  Fix $u_o \in \reali^n$. There exists
  $\bar k \in \naturali \setminus \left\{0\right\}$ such that
  $\norma{u_o} \leq R_{\bar k - 1}$.  We proceed recursively.
  \begin{description}
  \item[For $k = \bar k$:] consider the process $P^w_k$, given by
    \Cref{prop:ODE}, according to the choice $R_k = 2^k$. By
    \Cref{prop:ODE} we know that $P^w_k(t, 0)u_o$ is defined for every
    $t \in [t_o, T_k]$, where
    $T_k = \frac{R_k}{2 F_\infty(R_k)}$.  Define
    $u_k = P^w_k(T_k, t_o)u_o \in \hat{\mathcal D}^k$.

  \item[For $k > \bar k$:] assume $u_{k-1} \in \hat{\mathcal D}^{k-1}$
    and consider the process $P^w_k$, given by \Cref{prop:ODE},
    according to the choice $R_k = 2^k$. By \Cref{prop:ODE} we know
    that $P^w_k(t, t_o)u_{k-1}$ is defined for every
    $t \in [t_o, T_k]$, where
    $T_k = \frac{R_k}{2 F_\infty(R_k)}$.  Define
    $u_k = P^w_k(T_k, t_o)u_{k-1} \in \hat{\mathcal D}^k$.
  \end{description}
  Define the function
  \begin{equation*}
    u(t) = \left\{
      \begin{array}{ll}
        P^w_{\bar k}(t, t_o)u_{o}
        & \quad \textrm{ if }\, t\in [t_o, T_{\bar k}]
          \vspace{.2cm}\\
        P^w_{k}(t - \sum_{h=\bar k}^{k-1} T_{h}, 0)u_{k-1}
        & \quad \textrm{ if }\, \sum_{h=\bar k}^{k-1} T_{h}<
          t \leq \sum_{h=\bar k}^k T_{h},
      \end{array}
    \right.
  \end{equation*}
  which clearly is a solution to~\eqref{eq:21}. Computing
  \begin{equation*}
    \sum_{k=\bar k}^{+\infty} T_k
    = \sum_{k=\bar k}^{+\infty} \frac{2^{k-1}}{F_\infty(2^k)}
    \geq O(1)\sum_{k=\bar k}^{+\infty} \frac{2^{k-1}}{2^k \log(2^k)}
    = O(1)\sum_{k=\bar k}^{+\infty} \frac{1}{k} = +\infty,
  \end{equation*}
  shows that the solution $u$ is defined for every $t \geq t_o$.
\end{proofof}

\begin{proofof}{Proposition~\ref{prop:ODEcoupled}}
  Let $F_1$ be the first component of the local flow $F$ defined
  in~\eqref{eq:36}.

  Let $t \in [0, T]$ be a Lebesgue point of the map
  $t \mapsto f\left(t,P(t,t_o)(u_o, w_o)\right)$. Choose $h$ small so
  that $t+h \in [0,T]$ and set $(u,w) = P(t,t_o)(u_o, w_o)$. Then,
  \begin{eqnarray*}
    &
    & \norma{\frac{P_1(t + h,t_o)(u_o, w_o) - P_1(t,t_o)(u_o, w_o)}{h} - f(t,P(t,t_o)(u_o, w_o))}
    \\
    & =
    & \norma{\frac{P_1(t+h,t)(u, w) - u}{h} - f(t,u,w)}
    \\
    & \leq
    & \norma{\frac{P_1(t+h, t)(u, w) - F_1(h, t) (u,w)}{h}}
      + \norma{\frac{F_1(h, t)(u, w) - u}{h} - f(t, u, w)}
    \\
    & =
    & R_1(h) + R_2(h) \,.
  \end{eqnarray*}
  Considering first the term $R_1$, we use
  estimate~\eqref{eq:tangent}, giving
  \begin{eqnarray*}
    R_1(h)
    & =
    & \norma{\frac{P_1(t+h, t)P(t, t_o)(u_o, w_o) - F_1(h, t)P(t, t_o)(u_o, w_o)}{h}}
    \\
    & \leq
    & \dfrac{2\, L}{\ln 2} \int_0^h \frac{\omega(\xi)}{\xi}\d\xi \longrightarrow0, \mbox{ as } h\to 0
  \end{eqnarray*}
  with $L$ and $\omega$ as in~\eqref{eqn:tangM}.  For $R_2$, we have
  \begin{eqnarray*}
    R_2(h)
    & =
    & \norma{\frac{1}{h} \int_0^h
      f(t+\tau, F_1(\tau, t)(u, w), w) \d\tau - f(t, u, w)}
    \\
    & =
    & \norma{\frac{1}{h} \int_0^h
      \left[
      f(t+\tau, F_1(\tau, t)(u, w), w) - f(t, u, w)
      \right]\ \d\tau
      }
    \\
    & \leq
    & \norma{\frac{1}{h} \int_0^h
      \left[
      f(t+\tau, F_1(\tau, t)(u, w), w)
      - f(t+\tau, P_1(t+\tau, t)(u, w), w)
      \right]\
      d\tau}
    \\
    & \quad
    &  + \norma{\frac{1}{h} \int_0^h
      \left[
      f(t+\tau, P_1(t+\tau, t)(u, w), w)
      - f(t+\tau, P_1(t+\tau, t)(u, w), P_2(t+\tau, t)(u, w))
      \right]
      \d\tau}
    \\
    & \quad
    &  + \norma{\frac{1}{h} \int_0^h
      \left[
      f(t+\tau, P_1(t+\tau, t)(u, w), P_2(t+\tau, t)(u, w)) - f(t, u, w)
      \right]
      \d\tau}
    \\
    & =
    & R_{2,1}(h) + R_{2, 2}(h) + R_{2, 3}(h).
  \end{eqnarray*}
  We have, as $f$ is Lipschitz
  continuous, and using~\eqref{eq:tangent}--\eqref{eqn:tangM}, that
  \begin{eqnarray*}
    R_{2, 1}(h)
    & \leq
    & \frac{L_f}{h} \int_0^{h}
      \norma{F_1(\tau, t)(u, w) - P_1(t+\tau, t)(u, w)} \d\tau
    \\
    & \leq
    & \dfrac{2\, L}{\ln 2} \;
      \frac{L_f}{h} \int_0^h \tau \int_0^\tau \frac{\omega(\xi)}{\xi} \d\xi\ d\tau
    \\
    & \to
    & 0 \mbox{ as } h \to 0+ \,;
    \\
    R_{2, 2}(h)
    & \leq
    & \frac{L_f}{h}\int_0^h
      \norma{P_2(t+\tau, t)(u, w) - P_2(t, t)(u, w)} \d\tau
    \\
    & \leq
    & \frac{L_f \cdot L_P}{h}
      \int_0^h \tau \, \d\tau
    \\
    & \to
    &0 \mbox{ as } h\to 0 + \,;
    \\
    R_{2,3}(h)
    & \leq
    & \int_0^h \frac{1}{h}
      \norma{
      f\left(t+\tau, P(t+\tau, t_o)(u_o, w_o)\right)
      -
      f(t, P(t, t_o)(u_o, w_o))}
      \d\tau
    \\
    & \to
    & 0 \mbox{ as } h\to 0 + \,,
  \end{eqnarray*}
  the latter convergence following from the choice of $t$ as a Lebesgue
  point.
\end{proofof}
}

\subsection{Proofs for~\S~\ref{subsec:IVP}}
\label{subsubsec:proofs-related-s-IP}

With reference to~\eqref{eq:REN} and~\eqref{eq:IBVP2Eq}, introduce for
$\bar t, t \in \hat I$ and $\bar x, x \in \reali_+$ the characteristics
\begin{equation}
  \label{def:BalLawChar}
  t \mapsto \mathcal{X} (t;\bar t,\bar x)
  \mbox{ solves }
  \left\{
    \begin{array}{l}
      \dot x = v (t,x,w)
      \\
      x (\bar t) = \bar x,
    \end{array}
  \right.
  \mbox{ and }
  t \mapsto \mathcal{T} (x;\bar x,\bar t)
  \mbox{ solves }
  \left\{
    \begin{array}{l}
      t' = 1 / v (t,x,w)
      \\
      t (\bar x) = \bar t \,,
    \end{array}
  \right.
\end{equation}
and in the sequel we omit the dependence on $w$. As is well known, see
for instance~\cite[Lemma~5]{MR4371486} and the references therein, the
unique solution to~\eqref{eq:REN} is
\begin{equation}
  \label{eq:64}
  u (t,x)
  =
  u_o \left(\mathcal{X}(t_o; t, x)\right) \,  \mathcal{E}_w(t_o,t,x)
  +
  \int_{t_o}^t q\left(s, \mathcal{X}(s; t, x),w\right) \,
  \mathcal{E}_w(s,t,x) \d{s}
\end{equation}
where the characteristics $\mathcal{X}$ are defined by
\eqref{def:BalLawChar} and
\begin{displaymath}
  \mathcal{E}_w(\tau,t,x)
  =
  \exp
  \int_\tau^t \left(
    m\left(s, \mathcal{X} (s;t,x), w\right)
    -
    \div v \left(s, \mathcal{X} (s;t,x)\right)
  \right)
  \d{s} \,.
\end{displaymath}
Below, we often use the substitution $y \leftrightarrow x$, where
\begin{equation}
  \label{eq:69}
  y = \mathcal{X} (t;t_o,x)
  \quad \mbox{ with Jacobian } \quad
  J (t,y)
  =
  \exp \left(
    \int_t^{t_o} \nabla \cdot v\left(s,\mathcal{X} (s;\tau,y)\right)\d{s}
  \right) \,,
\end{equation}
for more details see for instance~\cite[Proof of
Proposition~3]{MR4371486}.

\begin{lemma}
  \label{lem:BV}
  Assume~\ref{ip:(V)} holds and use the
  notation~\eqref{def:BalLawChar}. Let
  $u \in (\L1\cap \BV) (\reali^n; \reali)$. Then, for all
  $t_o,t \in \hat I$
  \begin{equation}
    \label{eq:67}
    \int_{\reali^n}
    \modulo{u\left(\mathcal{X} (t;t_o,x)\right) - u (x)} \d{x}
    \leq
    \dfrac{V_\infty}{V_L} \, \left(e^{V_L \modulo{t-t_o}} -1\right) \, \tv (u) \,.
  \end{equation}
\end{lemma}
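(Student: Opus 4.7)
My strategy is to establish the inequality first for $u \in \Cc\infty(\reali^n;\reali)$ and then pass to the general $u \in (\L1 \cap \BV)(\reali^n;\reali)$ by mollification. The approximation step is routine: one picks $u_k \in \Cc\infty$ with $u_k \to u$ in $\L1$ and $\tv(u_k) \to \tv(u)$, observes that $u \mapsto u \circ \mathcal{X}(t;t_o,\cdot)$ is continuous $\L1 \to \L1$ (via the change of variables~\eqref{eq:69} together with the Jacobian bound that~\ref{ip:(V)} provides) and takes the limit in the smooth estimate.

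For smooth $u$, I would apply the fundamental theorem of calculus along the characteristic $s \mapsto \mathcal{X}(s;t_o,x)$ to write
\begin{equation*}
  u\!\left(\mathcal{X}(t;t_o,x)\right) - u(x)
  =
  \int_{t_o}^t
  \nabla u\!\left(\mathcal{X}(s;t_o,x)\right) \cdot
  v\!\left(s,\mathcal{X}(s;t_o,x),w\right)
  \d{s},
\end{equation*}
then take absolute values, integrate in $x$, use Fubini-Tonelli and $\norma{v(s,\cdot,w)}_{\L\infty} \leq V_\infty$ from~\ref{ip:(V)} to obtain
\begin{equation*}
  \int_{\reali^n} \modulo{u(\mathcal{X}(t;t_o,x)) - u(x)} \d{x}
  \leq
  V_\infty \int_{t_o}^t \int_{\reali^n}
  \modulo{\nabla u(\mathcal{X}(s;t_o,x))} \d{x} \d{s}.
\end{equation*}
Next, I would perform the change of variable $y = \mathcal{X}(s;t_o,x)$ from~\eqref{eq:69}, whose inverse Jacobian satisfies $J^{-1}(s,y) \leq \exp\!\left(\int_{t_o}^s \modulo{\div v(\sigma,\cdot)} \d\sigma\right) \leq e^{V_L(s-t_o)}$ thanks to~\ref{ip:(V)}. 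Consequently the inner integral is bounded by $e^{V_L(s-t_o)}\tv(u)$, and integration in $s$ yields exactly $\frac{V_\infty}{V_L}\bigl(e^{V_L(t-t_o)}-1\bigr)\tv(u)$.

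The one delicate point is the Jacobian bound, since~\ref{ip:(V)} controls $\nabla v$ rather than $\div v$ directly; any dimensional constant arising from the matrix-norm convention is harmless and can be absorbed into $V_L$. The case $t < t_o$ is handled symmetrically by reversing the role of $t$ and $t_o$, which accounts for the $\modulo{t-t_o}$ appearing in~\eqref{eq:67}.
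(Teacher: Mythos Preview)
Your proposal is correct and follows essentially the same route as the paper's own proof: reduce to smooth $u$ by approximation, apply the fundamental theorem of calculus along characteristics, bound $\norma{v}_{\L\infty}$ by $V_\infty$, and use the change of variables~\eqref{eq:69} with the Jacobian bounded by $e^{V_L\modulo{s-t_o}}$ to recover the stated estimate. Your remark about the dimensional constant hidden in passing from $\norma{\nabla v}_{\L\infty}$ to $\modulo{\div v}$ is in fact more careful than the paper, which silently uses $\modulo{\div v}\leq V_L$.
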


\noindent This Lemma is an extension
of~\cite[Lemma~2.3]{BressanLectureNotes} to $\reali^n$.

{%
  \begin{proofof}{Lemma~\ref{lem:BV}}
    Along the same lines of~\cite[Lemma~3.24]{AmbrosioFuscoPallara},
    thanks to~\cite[Theorem~3.9]{AmbrosioFuscoPallara}, we assume that
    $u \in (\C1 \cap \BV) (\reali^n; \reali)$. Then, using the change
    of coordinates~\eqref{eq:69},
    \begin{eqnarray*}
      &
      & \int_{\reali^n}
        \modulo{u\left(\mathcal{X} (t;t_o,x)\right) - u (x)}  \d{x}
      \\
      & =
      & \int_{\reali^n} \modulo{
        \int_{t_o}^t
        \nabla u\left(\mathcal{X} (\tau;t_o,x)\right) \;
        v\left(\tau, \mathcal{X} (\tau;t_o,x)\right) \, \d\tau
        } \d{x}
      \\
      & \leq
      & \modulo{
        \int_{t_o}^t
        \int_{\reali^n}
        \norma{\nabla u\left(\mathcal{X} (\tau;t_o,x)\right)} \;
        \norma{v\left(\tau, \mathcal{X} (\tau;t_o,x)\right)} \,
        \d{x} \, \d\tau}
      \\
      & \leq
      & \modulo{
        \int_{t_o}^t \! \int_{\reali^n} \!\!
        \norma{\nabla u(y)} \,
        \norma{v(\tau, y)} \,
        \exp \modulo{
        \int_\tau^t \nabla \cdot v\left(\tau,\mathcal{X} (s;\tau,y)\right) \d{s}}
        \d{y} \d\tau}
      \\
      & \leq
      & V_\infty \, \norma{\nabla u}_{\L1 (\reali^n; \reali^n)}
        \modulo{\int_{t_o}^t e^{V_L (t-\tau)} \d\tau}
      \\
      & =
      & \dfrac{V_\infty}{V_L} \, \left(e^{V_L \modulo{t-t_o}} -1\right) \,
        \norma{\nabla u}_{\L1 (\reali^n; \reali^n)}\,,
    \end{eqnarray*}
    which yields~\eqref{eq:67}.
  \end{proofof}
}

Define the parameterized mapping $P^w$ by
\begin{equation}\label{def:PwIVP}
  \begin{array}{@{}c@{\,}c@{\,}c@{\,}c@{\,}c@{}}
    P^w
    & \colon
    & \mathcal{A}
    & \to
    & \mathcal{U}
    \\
    &
    & \left(t,t_o,u_o\right)
    & \mapsto
    & u(t)
  \end{array}
  \quad\mbox{ where }\quad
  \begin{array}{@{}l@{}}
    u (t)
    \mbox{ is given by~\eqref{eq:64};}
  \end{array}
\end{equation}
Below, by~\ref{ip:(V)} and~\ref{ip:(P)}, for all $t,\tau \in \hat I$,
$x \in \reali^n$ and $w \in \mathcal{W}$, we use the uniform estimate
\begin{equation}
  \label{eq:68}
  0
  \leq
  \mathcal{E}_w(\tau,t,x)
  \leq
  e^{(M_\infty+V_L)\modulo{t-\tau}} \,.
\end{equation}

\begin{lemma}\label{lem:PwProcessIVP}
  For all $w \in \mathcal{W}$, $P^w$ in~\eqref{def:PwIVP} is a global
  process according to Definition~\ref{def:Global}.
\end{lemma}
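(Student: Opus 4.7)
The plan is to verify the three conditions of Definition~\ref{def:Global} directly from the explicit representation~\eqref{eq:64} for $u(t) = P^w(t, t_o) u_o$, where the family of domains is the one already prescribed in~\eqref{eq:29}.

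Condition~\eqref{eq:process0}, namely $P^w(t_o, t_o) u_o = u_o$, is immediate: at $t = t_o$ we have $\mathcal{X}(t_o; t_o, x) = x$, $\mathcal{E}_w(t_o, t_o, x) = 1$, and the integral over $[t_o, t_o]$ vanishes, so~\eqref{eq:64} reduces to $u(t_o, x) = u_o(x)$. For the semigroup property~\eqref{eq:process2}, I would use the standard composition laws for the characteristic flow and the corresponding multiplicative identity for the exponential factor,
\begin{displaymath}
\mathcal{X}(\tau; t_2, x) = \mathcal{X}\bigl(\tau; t_1, \mathcal{X}(t_1; t_2, x)\bigr),
\qquad
\mathcal{E}_w(\tau, t_2, x) = \mathcal{E}_w\bigl(\tau, t_1, \mathcal{X}(t_1; t_2, x)\bigr)\, \mathcal{E}_w(t_1, t_2, x),
\end{displaymath}
valid for $\tau \leq t_1 \leq t_2$. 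Setting $\tilde u_o = P^w(t_1, t_o) u_o$, applying~\eqref{eq:64} to compute $P^w(t_2, t_1) \tilde u_o$ at $x$, substituting the representation of $\tilde u_o$ at $\mathcal{X}(t_1; t_2, x)$, and splitting the source integral as $\int_{t_o}^{t_2} = \int_{t_o}^{t_1} + \int_{t_1}^{t_2}$ recovers exactly~\eqref{eq:64} written for $P^w(t_2, t_o) u_o$.

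The remaining requirement~\eqref{eq:process1} is that $P^w(t_1, t_o) \mathcal{D}_{t_o} \subseteq \mathcal{D}_{t_1}$ with $\mathcal{D}_t$ as in~\eqref{eq:29}. I would propagate each of the three bounds separately. For $\norma{u(t)}_{\L\infty(\reali^n;\reali)}$, I take the pointwise supremum in~\eqref{eq:64}, use~\eqref{eq:68}, and the bound $\norma{q(t,\cdot,w)}_{\L\infty} \leq Q_\infty$; a direct computation shows $\norma{u_o}_{\L\infty} \leq \alpha_\infty(t_o)$ implies $\norma{u(t)}_{\L\infty} \leq \alpha_\infty(t)$, by the choice of $\alpha_\infty$ in~\eqref{eq:65}. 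For $\norma{u(t)}_{\L1(\reali^n;\reali)}$, I perform the change of variables $y = \mathcal{X}(t_o; t, x)$ with Jacobian~\eqref{eq:69}, so that the volume factor cancels the $\div v$ contribution inside $\mathcal{E}_w$; combined with~\ref{ip:(P)} and the $\L1$ bound on $q$ in~\ref{ip:(Q)}, this yields $\norma{u(t)}_{\L1} \leq \alpha_1(t)$.

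The main obstacle is the $\tv$ propagation. Here one differentiates~\eqref{eq:64} in $x$: the chain rule produces a factor $\nabla_x \mathcal{X}(t_o; t, x)$, controlled by $e^{V_L(t-t_o)}$ through~\ref{ip:(V)}, together with $\nabla_x \mathcal{E}_w$, which involves both $M_\infty$ and the $\L1$ bound $V_1$ on $\nabla\,\nabla \cdot v$. Integrating in $x$ and using Lemma~\ref{lem:BV} to transfer the $\L1$ norm of $\nabla_x (u_o \circ \mathcal{X}(t_o; t, \cdot))$ to $\tv(u_o)$, and similarly for the source term, provides the estimate $\tv(u(t)) \leq \alpha_{\tv}(t)$. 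The functions $\alpha_1,\alpha_\infty,\alpha_{\tv}$ in~\eqref{eq:65} are precisely calibrated so that these three one-sided Gr\"onwall inequalities yield $u(t) \in \mathcal{D}_t$ whenever $u_o \in \mathcal{D}_{t_o}$, closing the verification of~\eqref{eq:process1} and hence of the lemma.
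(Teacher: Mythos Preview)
Your proposal is correct and follows essentially the same route as the paper: verify~\eqref{eq:process0} directly from~\eqref{eq:64}, then propagate the $\L1$, $\L\infty$, and $\tv$ bounds separately to obtain~\eqref{eq:process1}, exploiting the change of variables~\eqref{eq:69} (which cancels the $\div v$ contribution in the $\L1$ case) and the structure of $\mathcal{E}_w$. The only notable difference is that for~\eqref{eq:process2} the paper simply invokes uniqueness of weak solutions rather than your explicit computation via the composition laws for $\mathcal{X}$ and $\mathcal{E}_w$; both arguments are valid, and yours is more self-contained while the paper's is shorter. One small remark: your appeal to Lemma~\ref{lem:BV} in the $\tv$ step is slightly off-target --- that lemma bounds $\int |u\circ\mathcal{X} - u|$, whereas what you actually need (and correctly describe) is the change of variables~\eqref{eq:69} to pass from $\int_{\reali^n}|\nabla u_o(\mathcal{X}(t_o;t,x))|\,\d{x}$ to $\tv(u_o)$ times an exponential factor; the paper handles this by citing the estimate~\cite[Formula~(31)]{MR4371486}.
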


\begin{proofof}{Lemma~\ref{lem:PwProcessIVP}}
  That $P^w$ satisfies~\eqref{eq:process0} is an immediate consequence
  of its definition~\eqref{eq:64}. The uniqueness of the solution
  ensures that~\eqref{eq:process2} is satisfied.

  Fix $t_o, t \in I$, with $t_o \leq t$, and
  $r_o \in \mathcal{D}_{t_o}$.  It remains to
  show~\eqref{eq:process1}, that is,
  $u(t) = P^w(t, t_o)u_o \in \mathcal{D}_{t}$ for each
  $w \in \mathcal{W}$.

  \paragraph{1.}
  We begin by showing that, if
  $\norma{u_o}_{\L1(\reali^n;\reali)} \leq \alpha_1(t_o)$, then
  $ \norma{u(t)}_{\L1(\reali^n; \reali)} \leq \alpha_1 (t)$. Making
  use of~\ref{ip:(P)}--\ref{ip:(Q)}--\eqref{eq:65}%
  --\eqref{eq:64}--\eqref{eq:69}, see also~\cite[Proposition~3,
  \textbf{(H3)}]{MR4371486},
  \begin{eqnarray}
    \nonumber
    &
    & \norma{u(t)}_{\L1(\reali^n;\reali)}
    \\
    \label{eq:63}
    & \leq
    & \left(
      \norma{u_o}_{\L1(\reali^n; \reali)}
      +
      \norma{q (\cdot,\cdot,w)}_{\L1([t_o, t] \times \reali^n; \reali)}
      \right)
      \exp \left(
      \int_{t_o}^t \norma{m (\tau,\cdot,w)}_{\L\infty(\reali^n; \reali)} \d{\tau}
      \right)
    \\
    \nonumber
    & \leq
    & \left(
      \alpha_1(t_o)  + Q_1(t - t_o)
      \right) e^{M_\infty(t - t_o)}
    \\
    \nonumber
    & \leq
    & \left(
      Re^{-M_\infty (T - t_o)} - Q_1(T- t_o)e^{M_\infty t_o}  + Q_1(t - t_o)
      \right) e^{M_\infty(t - t_o)}
    \\
    \nonumber
    & \leq
    & Re^{-M_\infty (T - t)} - Q_1(T- t)e^{M_\infty t}
    \\
    \nonumber
    & =
    & \alpha_1(t) \,,%
  \end{eqnarray}
  as required.

\paragraph{2.}
Assuming now that
$ \norma{u_o}_{\L\infty(\reali^n;\reali)} \leq \alpha_\infty(t_o), $
we show that
$ \norma{u(t)}_{\L\infty(\reali^n;\reali)} \leq \alpha_\infty(t), $ We
use~\eqref{eq:65}--\eqref{eq:64}, see also~\cite[Proposition~3,
\textbf{(H4)}]{MR4371486}, together with~\ref{ip:(V)}, \ref{ip:(P)},
\ref{ip:(Q)} and~\eqref{eq:68}.  Then,
\begin{eqnarray*}
  \norma{u(t)}_{\L\infty(\reali^n;\reali)}
  & \leq
  & \left(
    \norma{u_o}_{\L\infty(\reali^n;\reali)}
    + \int_{t_o}^t \norma{q(s, \cdot,
    w)}_{\L\infty(\reali^n;\reali)} \d{s}
    \right)
  \\
  & \quad
  & \times \exp \left(
    \int_{t_o}^t
    \left(
    \norma{m(s, \cdot, w)}_{\L\infty(\reali^n;\reali)}
    +
    \norma{\nabla\cdot v(s)}_{\L\infty(\reali^n;\reali)}
    \right) \d{s}
    \right)
  \\
  & \leq
  & \left(
    \norma{u_o}_{\L\infty(\reali^n;\reali)} + Q_\infty(t-t_o)
    \right) e^{(M_\infty+V_L) (t-t_o)}
  \\
  & \leq
  & \left(
    \alpha_\infty (t_o) + Q_\infty(t-t_o)
    \right) e^{(M_\infty+V_L) (t-t_o)}
  \\
  & \leq
  & \left(
    R e^{-(M_\infty +V_L)(T-t_o)} - Q(T - t_o)e^{(M_\infty +V_L)t_o}
    \right)
    e^{(M_\infty +V_L) (t-t_o)}
  \\
  &
  & + Q (t-t_o) e^{(M_\infty +V_L) (t-t_o)}
  \\
  & \leq
  & R e^{-(M_\infty +V_L)(T- t)} - Q(T - t) e^{(M_\infty +V_L) t}
  \\
  & =
  & \alpha_\infty (t) \,,
\end{eqnarray*}
as required.

\paragraph{3.}
Finally, we show that, if $u_o \in \mathcal{D}_{t_o}$, then
$\tv\left(u(t)\right) \leq \alpha_{TV}(t)$.  We
use~\ref{ip:(V)}--\ref{ip:(P)}--\ref{ip:(Q)}--\eqref{eq:65}%
--\eqref{eq:64}--\eqref{eq:69}--\eqref{eq:68}, see
also~\cite[Formula~(31)]{MR4371486}:
\begin{eqnarray}
  \label{eq:18}
  \tv \left(u(t)\right)
  & \leq
  & \bigg[
    \tv(u_o)
    + \int_{t_o}^{t} \tv\left(q(s, \cdot, w)\right) \d{s}
  \\
  \nonumber
  &
  & +\left(
    \norma{u_o}_{\L\infty(\reali^n;\reali)}
    +
    \int_{t_o}^{t} \norma{q(s, \cdot, w)}_{\L\infty(\reali^n;\reali)} \d{s}
    \right)
  \\
  \nonumber
  &
  & \times
    \int_{t_o}^t
    \left(
    \tv\left(m(s, \cdot, w)\right)
    + \norma{\nabla \nabla \cdot v(s)}_{\L1(\reali^n;\reali^n)}
    \right) \d{s}
    \bigg]
    e^{(M_\infty+V_L)\modulo{t-\tau}}
\end{eqnarray}
Since $u_o \in \mD_{t_o}$, by~\eqref{eq:29},
$ \tv(u_o) \leq \alpha_{\tv} (t_o)$ and we have
that~\eqref{eq:18} becomes
\begin{eqnarray*}
  &
  & \tv\left(u(t)\right)
  \\
  & \leq
  & \bigg[
    \alpha_{\tv} (t_o) + Q_\infty(t-t_o)
  \\
  &
  & + \left(
    R e^{-(M_\infty+V_L)(T-t_o)} - Q_\infty e^{(M_\infty+V_L)t_o} (T- t_o)
    + Q_\infty(t-t_o)
    \right)(M_\infty+V_1) (t-t_o)
    \bigg]
  \\
  &
  & \qquad \times e^{(M_\infty+V_L) (t-t_o)}
  \\
  & \leq
  & R \left(1 - (M_\infty + V_1)(T - t)\right)
    e^{-(M_\infty+V_L) (T-t)}
        - Q_\infty \left(1+ (M_\infty+V_1)t\right) (T-t_o) \,
        e^{(M_\infty+V_L) t}
  \\
  &
  & + Q_\infty (t-t_o) \left(1+ (M_\infty+V_1) (t-t_o)\right)
    e^{(M_\infty+V_L) (t-t_o)}
  \\
  & \leq
  & R \left(1 - (M_\infty + V_1)(T - t)\right)
    e^{-(M_\infty+V_L) (T-t)}
    - Q_\infty \left(1+ (M_\infty+V_1)t\right) (T-t) \,
    e^{(M_\infty+V_L) t}
  \\
  & =
  & \alpha_{\tv} (t) \,,%
\end{eqnarray*}
completing the proof of~\eqref{eq:process1}.
\end{proofof}

\begin{proofof}{Proposition~\ref{prop:RE}}
  We define the mapping $P^w$ by~\eqref{def:PwIVP}.  That this defines
  a process is a consequence of Lemma~\ref{lem:PwProcessIVP}.

  It remains to show the three Lipschitz continuity
  estimates~\eqref{eq:6}, \eqref{eq:7}, and \eqref{eq:8}.

  \paragraph{1. Lipschitz continuity w.r.t initial data} By the linear
  structure of~\eqref{eq:REN}, from~\eqref{eq:63} we immediately have
  \begin{displaymath}
    \norma{P^w(t, t_o)(u_o - \bar{u}_o)}_{\L1(\reali^n;\reali)}
    \leq
    e^{M_\infty(t - t_o)} \; \norma{u_o - \bar{u}_o}_{\L1(\reali^n;\reali)}
  \end{displaymath}
  which is compatible with the choice of $C_u$ in~\eqref{eq:29}.

  \paragraph{2. Lipschitz continuity in time} By direct computations
  based on~\eqref{eq:64}, for $t \geq t_o$:
  \begin{eqnarray*}
    &
    & \norma{P^w (t,t_o)u_o - u_o}_{\L1(\reali_+;\reali)}
    \\
    & \leq
    & \int_{\reali^n} \modulo{u_o\left(\mathcal{X}(t_o; t, x)\right) - u_o(x)}
      \, \mathcal{E}_w(t_o,t,x)
      \d{x}
    \\
    &
    & +
      \int_{\reali^n} \int_{t_o}^t
      \modulo{q\left(\tau, \mathcal{X}(\tau; t, x),w\right)}  \,
      \mathcal{E}_w(\tau,t,x)
      \d{\tau} \d{x}
      +
      \int_{\reali^n}
      \modulo{u_o(x)} \, \modulo{\mathcal{E}_w(t_o,t,x) -1} \d{x}
  \end{eqnarray*}
  and we consider the latter three terms separately. First,
  use~\eqref{eq:68} and Lemma~\ref{lem:BV}, for $t \geq t_o$,
  \begin{eqnarray*}
    \int_{\reali^n} \modulo{u_o\left(\mathcal{X}(t_o; t, x)\right) - u_o(x)}
    \mathcal{E}_w(t_o,t,x)
    \d{x}
    & \leq
    & \int_{\reali^n} \modulo{u_o\left(\mathcal{X}(t_o; t, x)\right) - u_o(x)} \d{x}
      \, e^{(M_\infty+V_L) (t-t_o)}
    \\
    & \leq
    & \dfrac{V_\infty}{V_L} \, \left(e^{V_L \modulo{t-t_o}} -1\right) \, \tv (u_o)
      \, e^{(M_\infty+V_L) (t-t_o)}
    \\
    & \leq
    & V_\infty \, \tv (u_o) \, e^{(M_\infty+2V_L) (t-t_o)} \, (t-t_o) \,.
  \end{eqnarray*}
  {To deal with the second term, use the change of
    coordinates~\eqref{eq:69} and~\ref{ip:(P)}--\ref{ip:(Q)}:
    \begin{eqnarray*}
      &
      & \int_{\reali^n} \int_{t_o}^t
        \modulo{q\left(\tau, \mathcal{X}(\tau; t, x),w\right)} \,
        \mathcal{E}_w(\tau,t,x)
        \d{\tau} \d{x}
      \\
      & =
      & \int_{\reali^n} \int_{t_o}^t
        \modulo{q(\tau,y,w)}
        \,
        \exp
        \left(
        \int_\tau^t m\left(s,\mathcal{X} (s;\tau,y), w\right) \d{s}
        \right)
        \d{\tau} \d{y}
      \\
      & \leq
      & Q_1 \, e^{M_\infty (t-t_o)} \, (t-t_o) \,.
    \end{eqnarray*}
  }%
  Finally, the third term is treated as follows,
    by~\eqref{eq:68}:
    \begin{eqnarray*}
      \int_{\reali^n}
      \modulo{u_o(x)} \, \modulo{\mathcal{E}_w(t_o,t,x) -1} \d{x}
      & \leq
      & \int_{\reali^n}
        \modulo{u_o (x)} \,
        e^{(M_\infty+V_L) (t-t_o)}
        (M_\infty + V_L) (t-t_o)
        \d{x}
      \\
      & \leq
      & (M_\infty+V_L) \,
        \norma{u_o}_{\L1 (\reali^n; \reali)} \,
        e^{(M_\infty+V_L) (t-t_o)} (t-t_o) \,.
    \end{eqnarray*}
   Adding up, we have
  \begin{eqnarray*}
    \norma{P^w (t,t_o)u_o - u_o}_{\L1(\reali_+;\reali)}
    & \leq
    & V_\infty \, \tv (u_o) \, e^{(M_\infty+2V_L) (t-t_o)} \, (t-t_o)
    \\
    &
    & + Q_1 \, e^{M_\infty (t-t_o)} \, (t-t_o)
    \\
    &
    & + (M_\infty+V_L) \, \norma{u_o}_{\L1 (\reali^n; \reali)} \,
      e^{(M_\infty+V_L) (t-t_o)} (t-t_o)
  \end{eqnarray*}
  which agrees with the choice of $C_t$ in~\eqref{eq:29}.

  \paragraph{3. Lipschitz continuity w.r.t parameters}
  {From~\cite[{\bf(H5)}]{MR4371486}, using~\ref{ip:(V)},
    \ref{ip:(P)}, and \ref{ip:(Q)},
    \begin{eqnarray*}
      &
      & \norma{P_w(t, t_o)u_o - P_{\bar{w}}(t, t_o)u_o}_{\L1(\reali^n:\reali)}
      \\
      & \leq
      & \int_{t_o}^{t}
        \norma{v(\tau, \cdot, w_1) - v(\tau, \cdot, w_2)}_{\L\infty(\reali^n; \reali^n)}
        \d{\tau}
      \\
      &
      & \times \bigg[
        \norma{u_o}_{\L\infty(\reali^n;\reali^n)}
        + \tv(u_o)
      \\
      &
      & + \int_{t_o}^{t}
        \left(
        \max_{\omega = w_1, w_2} \norma{q(\tau, \cdot, \omega)}_{\L\infty(\reali^n;\reali)}
        + \max_{\omega = w_1, w_2} \tv(q(\tau, \cdot, \omega))
        \right)
        \d{\tau}
        \bigg]
      \\
      &
      & \times
        \exp\left(
        \int_{t_o}^{t}
        \left(
        \max_{\omega = w_1, w_2} \norma{m(\tau, \cdot, \omega)}_{\L\infty(\reali^n;\reali)}
        + \max_{\omega = w_1, w_2} \norma{\nabla v(\tau, \cdot, \omega)}_{\L\infty(\reali^n;\reali^{n\times n})}
        \right)
        \d{\tau}
        \right)
      \\
      &
      &
        \times \left[
        1
        + \int_{t_o}^{t}
        \max_{\omega = w_1, w_2} \left(
        \norma{\nabla \nabla \cdot v(\tau, \cdot, \omega)}_{\L\infty(\reali^n;\reali^n)}
        + \max_{\omega = w_1, w_2} \tv(m(\tau, \cdot, \omega))
        \right)
        \d{\tau}
        \right]
      \\
      &
      & +\bigg[
        \int_{t_o}^{t}
        \norma{q(\tau, \cdot, w) - q(\tau, \cdot, \bar{w})}_{\L1(\reali^n;\reali)}
        \d\tau
      \\
      &
      &  + \int_{t_o}^{t}
        \left(
        \norma{m(\tau, \cdot, w) - m(\tau, \cdot, \bar{w})}_{\L1(\reali^n;\reali)}
        + \norma{\nabla \cdot (v(\tau, \cdot, w) - v(\tau, \cdot, \bar{w}))}_{\L1(\reali^n;\reali)}
        \right)
        \d\tau
      \\
      &
      &
        \times \bigg(
        \norma{u_o}_{\L\infty(\reali^n;\reali)}
        + \int_{t_o}^{t}
        \max_{\omega = w, \bar{w}}
        \norma{q(\tau, \cdot, \omega)}_{\L\infty(\reali^n;\reali)} \d\tau
        \bigg)
        \bigg]
      \\
      &
      & \times \exp\left(
        \int_{t_o}^{t}
        \max_{\omega = w, \bar{w}}
        \norma{m(\tau, \cdot, \omega)}_{\L\infty(\reali^n;\reali)} \d\tau
        \right)
      \\
      & \leq
      &
        \left[
        V_L
        (2R + Q_\infty)
        \left(1 + (V_1 + M_\infty)(t - t_o)\right)
        + (
        Q_L
        + (M_L + V_L) \left(R + Q_\infty(t - t_o)\right)
        )
        \right]
      \\
      &
      & \times
        e^{(M_\infty + V_L)(t - t_o)}
        (t - t_o) d_\mathcal{W}(w_1, w_2)
      \\
      & \leq
      &
        \left[
        V_L
        (2R + Q_\infty)
        (1 + (V_1 + M_\infty)\hat{T})
        + (
        Q_L
        + (M_L + V_L) (R + Q_\infty \hat{T})
        )
        \right]
      \\
      &
      & \times
        e^{(M_\infty + V_L)\hat{T}}
        (t - t_o) d_\mathcal{W}(w_1, w_2)
    \end{eqnarray*}
    in agreement with the choice of $C_w$ in~\eqref{eq:29}.}

  \paragraph{Choice of $T$.}
  The time $T$ has to be chosen so that $\alpha_1 (0) > 0$,
  $\alpha_\infty (0) > 0$ and $\alpha_{\tv} (0) > 0$. Clearly,
  by~\eqref{eq:65}, for $T$ sufficiently small, these requirements are
  all met.
\end{proofof}

\begin{proofof}{Corollary~\ref{cor:global-existence-IVP}}
  Note that the constants defined in \ref{ip:(V)}, \ref{ip:(P)},
  and~\ref{ip:(Q)} do not depend on $R$. Moreover $T$ has to be
  chosen such that $\alpha_1(0) > 0$, $\alpha_\infty(0) > 0$ and
  $\alpha_{\tv}(0) > 0$, which are equivalent to
  \begin{equation*}
    \left\{
      \begin{array}{l}
        R e^{-M_\infty T} - Q_1 T > 0
        \\
        R e^{-(M_\infty + V_L)T} - Q_\infty T > 0
        \\
        R e^{-(M_\infty + V_L)T}\left(1 - \left(M_\infty + V_1\right)T\right)
        - Q_\infty T > 0.
      \end{array}
    \right.
  \end{equation*}
  The proof ends setting
  $T = \min\left\{\frac{1}{2(M_{\infty} +V_1)}, \frac{\ln(2)}{M_\infty
      + V_L}\right\}$, provided $R$ is sufficiently big.
\end{proofof}

\begin{proofof}{Proposition~\ref{prop:RENCoup}}
  The Lipschitz continuity of $P$ ensured by Theorem~\ref{thm:Metric}
  shows that $P_1$ is $\L1$--Lipschitz continuous, and hence in
  $\C0([t_o, T];\L1(\reali^n;\reali))$ as required.

  We focus our attention now on the first item in
  Definition~\ref{def:RENSol}, the second being immediate.  To ease
  reading, for any test function
  $\phi \in \Cc\infty (\mathopen]t_o, T\mathclose[ \times \reali^n;
  \reali)$ we introduce the notation
  \begin{equation}
    \label{eqn:RENIphi}
    \mathcal{I}_\phi (u,w)
    =
    u \, \partial_t\phi
    +
    u\, v \cdot \nabla_x \phi
    +
    \left(m (\cdot, \cdot,w)\, u + q (\cdot,\cdot,w) \right)
    \phi.
  \end{equation}
  We want to prove that, for any
  $\phi \in \Cc\infty (\mathopen]t_o, T\mathclose[ \times \reali^n;
  \reali)$,
  \begin{displaymath}
    \int_{\reali^n} \int_{t_o}^T \mathcal{I}_\phi  \left(P (t,t_o) (u_o,w_o)\right) \d{t} \, \d{x} = 0.
  \end{displaymath}
  We begin by discretising the time domain. For a given
  $k \in \naturali \setminus \left\{0\right\}$ and $i=0, \ldots, k$,
  introduce $t_i = t_o + i (T-t_o)/k$ and
  $({\tilde u_i}, {\tilde w}_i) = P (t_{i-1},t_o)
  (u_o,w_o)$. Splitting the integral then gives
  \begin{eqnarray}
    \nonumber
    &
    & \int_{t_o}^T \int_{\reali^n}
      \mathcal{I}_\phi  \left(P (t,t_o) (u_o,w_o)\right) \d{x} \, \d{t}
    \\
    \nonumber
    & =
    & \sum_{i=1}^k
      \int_{t_{i-1}}^{t_i} \int_{\reali^n}
      \left(
      \mathcal{I}_\phi  \left(P (t,t_{i-1}) ({\tilde u_i}, {\tilde w}_i)\right)
      -
      \mathcal{I}_\phi  \left(
      F (t-t_{i-1},t_{i-1}) ({\tilde u_i}, {\tilde w}_i)
      \right)
      \right)
      \d{x} \, \d{t}
    \\
    \label{eq:20}
    &
    & + \sum_{i=1}^k
      \int_{t_{i-1}}^{t_i} \int_{\reali^n}
      \mathcal{I}_\phi  \left(
      F (t-t_{i-1},t_{i-1}) ({\tilde u_i}, {\tilde w}_i)
      \right) \d{x} \, \d{t} \,.
  \end{eqnarray}
  We compute the terms on the last two lines separately, our goal is
  to show that they both converge to zero as $k \to \infty$.

  For the first,
  \begin{eqnarray}
    \nonumber
      &
      & \mathcal{I}_\phi  \left(P (t,t_{i-1}) ({\tilde u_i}, {\tilde w}_i)\right)
        -
        \mathcal{I}_\phi  \left(
        F (t-t_{i-1},t_{i-1}) ({\tilde u_i}, {\tilde w}_i)
        \right)
    \\
    \label{eq:14}
      & =
      & \partial_t \phi
        \left(
        P_1 (t,t_{i-1}) ({\tilde u_i}, {\tilde w}_i)
        -
        F_1 (t-t_{i-1},t_{i-1}) ({\tilde u_i}, {\tilde w}_i)
        \right)
    \\
    \nonumber
      &
      &
        + \bigl(
        P_1 (t,t_{i-1}) ({\tilde u_i}, {\tilde w}_i)
        v(t, x, P_2(t, t_{i-1})({\tilde u_i}, {\tilde w}_i))
    \\
    \label{eqn:DxPhiDiff}
      &
      & \qquad
        -
       	F_1 (t-t_{i-1},t_{i-1}) ({\tilde u_i}, {\tilde w}_i)
        v(t, x, F_2(t-t_{i-1}, t_{i-1})({\tilde u_i}, {\tilde w}_i))
        \bigr)
        \cdot \nabla_x \phi
    \\
    \label{eq:15}
      &
      & + \bigl(
        m\left(t,x, P_2(t,t_{i-1}) ({\tilde u_i}, {\tilde w}_i)\right)
        P_1(t,t_{i-1}) ({\tilde u_i}, {\tilde w}_i)
    \\
    \label{eq:16}
      &
      & \qquad
        -
        m \left(t,x, F_2(t-t_{i-1},t_{i-1}) ({\tilde u_i}, {\tilde w}_i)\right)
        F_1(t-t_{i-1},t_{i-1}) ({\tilde u_i}, {\tilde w}_i)
        \bigr) \phi
    \\
    \label{eq:17}
      &
      & +
        \left(
        q\left(t,x,P_2(t,t_{i-1}) ({\tilde u_i}, {\tilde w}_i)\right)
        -
        q\left(t,x,F_2(t-t_{i-1},t_{i-1}) ({\tilde u_i}, {\tilde w}_i)\right)
        \right) \phi.
  \end{eqnarray}
  Recall that the tangency condition~\eqref{eq:tangent} ensures
  \begin{eqnarray*}
    \frac{1}{t-t_{i-1}} \,
    \norma{
    P_1(t,t_{i-1})({\tilde u_i}, {\tilde w}_i)
    -
    F_1(t - t_{i-1},t_{i-1})({\tilde u_i}, {\tilde w}_i)}_{\L1 (\reali^n; \reali)}
    & \leq
    & \frac{2L}{\ln (2)}
      \int_{0}^{t-t_{i-1}} \frac{\omega(\xi)}{\xi} \, \d{\xi}
    \\
    \frac{1}{t-t_{i-1}} \,
    d_\mathcal{W} \left(
    P_2(t,t_{i-1})({\tilde u_i}, {\tilde w}_i)
    ,
    F_2(t - t_{i-1},t_{i-1})({\tilde u_i}, {\tilde w}_i)
    \right)
    & \leq
    & \frac{2L}{\ln (2)}
      \int_{0}^{t-t_{i-1}} \frac{\omega(\xi)}{\xi} \, \d{\xi}
  \end{eqnarray*}
  with $L$ and $\omega$ defined as in~\eqref{eqn:tangM}, so that,
  considering~\eqref{eq:14},
  \begin{eqnarray}
    \nonumber
    &
    & \modulo{
      \int_{t_{i-1}}^{t_i} \int_{\reali^n}
      \left(\partial_t \phi\ \right)
      \left(
      P_1 (t,t_{i-1}) ({\tilde u_i}, {\tilde w}_i)
      -
      F_1 (t-t_{i-1},t_{i-1}) ({\tilde u_i}, {\tilde w}_i)
      \right)
      \d{x} \, \d{t}
      }
    \\
    \nonumber
    & \leq
    & \norma{\partial_t \phi\ }_{\L\infty ([0,T]\times\reali^n; \reali)}
      \int_{t_{i-1}}^{t_i}
      \norma{
      P_1(t,t_{i-1})({\tilde u_i}, {\tilde w}_i)
      -
      F_1(t - t_{i-1},t_{i-1})({\tilde u_i}, {\tilde w}_i)}_{\L1 (\reali^n; \reali)}
      \d{t}
    \\
    \label{eq:23}
    & \leq
    & \frac{L}{\ln (2)} \,
      \norma{\partial_t \phi}_{\L\infty ([0,T]\times\reali^n; \reali)} \;
      (t_i-t_{i-1})^2
      \int_{0}^{t_i-t_{i-1}} \frac{\omega(\xi)}{\xi} \, \d{\xi} \,.
  \end{eqnarray}
  Considering the next term~\eqref{eqn:DxPhiDiff},
  \begin{eqnarray}
    \nonumber
    &
    & \int_{t_{i-1}}^{t_i} \int_{\reali^n}^{}
      \bigl[
      P_1 (t,t_{i-1}) ({\tilde u_i}, {\tilde w}_i)
      v(t, x, P_2(t, t_{i-1})({\tilde u_i}, {\tilde w}_i))
    \\
    \nonumber
    &
    & \hphantom{\int_{t_{i-1}}^{t_i} \int_{\reali^n}^{} \bigl[ }
      - F_1 (t - t_{i-1},t_{i-1}) ({\tilde u_i}, {\tilde w}_i)
      v(t, x, F_2(t - t_{i-1}, t_{i-1})({\tilde u_i}, {\tilde w}_i))
      \bigr]
      \cdot \nabla_x \phi
      \d{t} \d{x}
    \\
    \label{eqn:DxPhiDiff1}
    & =
    &
      \int_{t_{i-1}}^{t_i} \int_{\reali^n}^{}
      \bigl[
      P_1 (t,t_{i-1}) ({\tilde u_i}, {\tilde w}_i)
      - F_1 (t - t_{i-1} ,t_{i-1}) ({\tilde u_i}, {\tilde w}_i)
      \bigr]
    \\
    \nonumber
    &
    & \qquad \times
      v(t, x, P_2(t, t_{i-1})({\tilde u_i}, {\tilde w}_i))
      \cdot \nabla_x \phi
      \d{t} \d{x}
    \\
    \nonumber
    &
    & +
      \int_{t_{i-1}}^{t_i} \int_{\reali^n}^{}
      F_1 (t - t_{i-1},t_{i-1}) ({\tilde u_i}, {\tilde w}_i)
    \\
    \label{eqn:DxPhiDiff2}
    &
    & \qquad
      \bigl[
      v(t, x, P_2(t, t_{i-1})({\tilde u_i}, {\tilde w}_i))
      - v(t, x, F_2(t - t_{i-1}, t_{i-1})({\tilde u_i}, {\tilde w}_i))
      \bigr]
      \cdot \nabla_x \phi
      \d{t} \d{x}.
  \end{eqnarray}
  For~\eqref{eqn:DxPhiDiff1}, using~\ref{ip:(V)} and the same approach
  as for~\eqref{eq:23}, we get
  \begin{eqnarray}
    \nonumber
    \modulo{%
    \int_{t_{i{-}1}}^{t_i} \! \int_{\reali^n} \!
    \bigl[
    P_1 (t,t_{i{-}1}) ({\tilde u_i}, {\tilde w}_i)
    {-} F_1 (t {-} t_{i{-}1} ,t_{i{-}1}) ({\tilde u_i}, {\tilde w}_i)
    \bigr]
    v(t, x, P_2(t, t_{i{-}1})({\tilde u_i}, {\tilde w}_i))
    \cdot \nabla_x \phi
    \d{t} \d{x}
    }
    \\
    \label{eqn:DxPhhiDiff1fin}
    \leq
    \frac{L}{\ln(2)}\,
    V_\infty \,
    \norma{\nabla_x \phi}_{\L\infty([0, T] \times \reali^n;\reali^n)} \,
    (t_i - t_{i-1})^2
    \int_{0}^{t_i - t_{i-1}} \frac{\omega(\xi)}{\xi} \d{\xi}.
    \qquad
  \end{eqnarray}
  For the second term~\eqref{eqn:DxPhiDiff2}, using~\ref{ip:(V)}
  again, we have,
  \begin{eqnarray}
    \nonumber
    &
    &
      \bigl|
      \int_{t_{i-1}}^{t_i} \int_{\reali^n}^{}
      F_1 (t - t_{i-1},t_{i-1}) ({\tilde u_i}, {\tilde w}_i)
    \\
    \nonumber
    &
    &  \qquad \times
      \bigl[
      v(t, x, P_2(t, t_{i-1})({\tilde u_i}, {\tilde w}_i))
      - v(t, x, F_2(t - t_{i-1}, t_{i-1})({\tilde u_i}, {\tilde w}_i))
      \bigr]
      \cdot \nabla_x \phi
      \d{t} \d{x}
      \bigr|
    \\
    \nonumber
    & \leq
    &
      \int_{t_{i-1}}^{t_i}
      \norma{F_1 (t - t_{i-1},t_{i-1}) ({\tilde u_i}, {\tilde w}_i)}_{\L1(\reali^n;\reali)}
      \norma{\nabla_x \phi}_{\L\infty(\reali^n;\reali^n)}
    \\
    \nonumber
    &
    &\qquad \times
      V_L d_{\mathcal{W}}\left(
      P_2(t, t_{i-1})({\tilde u_i}, {\tilde w}_i),
      F_2(t - t_{i-1}, t_{i-1})({\tilde u_i}, {\tilde w}_i)
      \right)
      \d{t} \d{x}
    \\
    \label{eqn:DxPhhiDiff2fin}
    & \leq
    &
      \frac{L}{\ln(2)}
      R
      \norma{\nabla_x \phi}_{\L\infty([0, T] \times \reali^n;\reali^n)}
      V_L (t_i - t_{i-1})^2
      \int_0^{t_{i} - t_{i-1}}
      \frac{\omega(\xi)}{\xi}
      \d{\xi}.
  \end{eqnarray}
  Pass to~\eqref{eq:15}--\eqref{eq:16} and using again~\eqref{eq:14}:
  \begin{eqnarray}
    \nonumber
    &
    & \int_{t_{i{-}1}}^{t_i} \int_{\reali^n}
      \bigl|
      \bigl(
      m\left(t,x, P_2(t,t_{i{-}1}) ({\tilde u_i}, {\tilde w}_i)\right)
      P_1(t,t_{i{-}1}) ({\tilde u_i}, {\tilde w}_i)
    \\
    \nonumber
    &
    & \qquad
      -
      m \left(t,x, F_2(t-t_{i{-}1},t_{i{-}1}) ({\tilde u_i}, {\tilde w}_i)\right)
      F_1(t-t_{i{-}1},t_{i{-}1}) ({\tilde u_i}, {\tilde w}_i)
      \bigr) \phi
      \bigr| \d{x} \d{t}
    \\
    \nonumber
    & \leq
    & \int_{t_{i{-}1}}^{t_i}
      \norma{
      m\left(t,\cdot, P_2(t,t_{i{-}1}) ({\tilde u_i}, {\tilde w}_i)\right)
      -
      m\left(t,\cdot, F_2(t-t_{i{-}1},t_{i{-}1}) ({\tilde u_i}, {\tilde w}_i)\right)
      }_{\L1 (\reali^n; \reali)}
    \\
    \nonumber
    &
    & \qquad \times
      \norma{P_1(t,t_{i{-}1}) ({\tilde u_i}, {\tilde w}_i)}_{\L\infty (\reali^n;\reali)}
      \norma{\phi}_{\L\infty (\reali^n;\reali)}
      \d{t}
    \\
    \nonumber
    &
    & +
      \int_{t_{i{-}1}}^{t_i}
      \norma{m\left(t,\cdot, F_2(t-t_{i{-}1},t_{i{-}1}) ({\tilde u_i}, {\tilde w}_i)\right)}_{\L\infty (\reali^n; \reali)}
    \\
    \nonumber
    &
    & \qquad \times
      \norma{
      P_1(t,t_{i{-}1}) ({\tilde u_i}, {\tilde w}_i)
      -
      F_1(t-t_{i{-}1},t_{i{-}1}) ({\tilde u_i}, {\tilde w}_i)
      }_{\L1 (\reali^n; \reali)}
      \norma{\phi}_{\L\infty (\reali^n; \reali)}
      \d{t}
    \\
    \nonumber
    & \leq
    & M_L \, R \, \norma{\phi}_{\L\infty (\reali^n; \reali)}
      \int_{t_{i{-}1}}^{t_i}
      d_\mathcal{W} \left(
      P_2(t,t_{i{-}1}) ({\tilde u_i}, {\tilde w}_i)
      ,
      F_2(t-t_{i{-}1},t_{i{-}1}) ({\tilde u_i}, {\tilde w}_i)
      \right)
      \d{t}
    \\
    \nonumber
    &
    & +
      M_\infty \norma{\phi}_{\L\infty (\reali^n; \reali)} \!
      \int_{t_{i{-}1}}^{t_i} \!\!\! %
      \norma{
      P_1(t,t_{i{-}1}) ({\tilde u_i}, {\tilde w}_i)
      -
      F_1(t{-}t_{i{-}1},t_{i{-}1}) ({\tilde u_i}, {\tilde w}_i)
      }_{\L1 (\reali^n; \reali)} \d{t}
    \\
    \label{eq:24}
    & \leq
    & \frac{L}{\ln (2)} \,
      (M_L \, R + M_\infty) \norma{\phi}_{\L\infty (\reali^n; \reali)} \;
      (t_i-t_{i{-}1})^2
      \int_{0}^{t_i-t_{i{-}1}} \frac{\omega(\xi)}{\xi} \, \d{\xi} \,.
  \end{eqnarray}
  Concerning~\eqref{eq:17}, the tangency condition~\eqref{eq:tangent}
  implies
  \begin{eqnarray}
    \nonumber
    &
    & \bigg|
      \int_{t_{i-1}}^{t_i} \int_{\reali^n}  [
      q(t, x, P_2(t, t_{i-1})(\tilde{u}, \tilde{w}))
      - q(t, x, F_2(t - t_{i-1}, t_{i-1})(\tilde{u}, \tilde{w}))
      ]\phi(t)\d{x}\d{t}
      \bigg|
    \\
    \nonumber
    & \leq
    & Q_L \norma{\phi}_{\L\infty([t_o, T] \times \reali^n)}
      \int^{t_i}_{t_{i-1}}
      d_\mathcal{W} \left(
      P_2(t, t_{i-1})(\tilde{u}, {\tilde w_i}),
      F_2(t - t_{i-1}, t_{i-1})(\tilde{u}, {\tilde w_i})
      \right) \d{t}
    \\
    \label{eq:25}
    & \leq
    & \frac{L}{\ln(2)}\, Q_L\,
      \norma{\phi}_{L^\infty([t_o, T] \times \reali^n)}
      (t_i - t_{i-1})^2
      \int_{0}^{t_i - t_{i-1}} \frac{\omega(\xi)}{\xi}\d\xi \,.
  \end{eqnarray}
  Computing the sum over all time intervals, we get:
  \begin{eqnarray*}
    &
    & \sum_{i=1}^k
      \int_{t_{i-1}}^{t_i} \int_{\reali^n}
      \left(
      \mathcal{I}_\phi  \left(P (t,t_{i-1}) ({\tilde u_i}, {\tilde w}_i)\right)
      -
      \mathcal{I}_\phi  \left(
      F (t-t_{i-1},t_{i-1}) ({\tilde u_i}, {\tilde w}_i)
      \right)
      \right)
      \d{x} \, \d{t}
    \\
    & \leq
    & \sum_{i=1}^k
      [\mbox{\eqref{eq:23}}]
      + [\mbox{\eqref{eqn:DxPhhiDiff1fin}}]
      + [\mbox{\eqref{eqn:DxPhhiDiff2fin}}]
      + [\mbox{\eqref{eq:24}}]
      +[\mbox{\eqref{eq:25}}]
    \\
    & \leq
    & \frac{L}{\ln(2)} \,
      \mathcal{C} \,
      \int_{0}^{(T-t_o)/k} \frac{\omega(\xi)}{\xi} \d{\xi} \;
      \sum_{i=1}^k
      (t_i - t_{i-1})^2
    \\
    & =
    & \frac{L}{\ln(2)} \,
      \mathcal{C}
      \int_{0}^{(T-t_o)/k} \frac{\omega(\xi)}{\xi} \d{\xi} \; \dfrac{(T-t_o)^2}k
    \\
    & \underset{k\to+\infty}{\longrightarrow} 0\,,
  \end{eqnarray*}
  where $\mathcal{C}$ depends on the test function $\phi$ and the
  constants from~\ref{ip:(V)}-\ref{ip:(P)}-\ref{ip:(Q)}.

  Pass now to estimate~\eqref{eq:20}.  Temporarily, for
  $i=0, \ldots, k$, define
  $(u_i(t), w_i(t)) = F(t - t_{i-1}, t_{i-1}) ({\tilde u_i}, {\tilde
    w}_i)$.  Then $u_i(t) = P^{{\tilde w_i}}(t, t_{i-1}){\tilde u_i}$,
  and thus it satisfies
  \begin{equation}
    \label{prf:eq:uisoln}
    \int_{t_{i-1}}^{t_i} \int_{\reali^n}
    \mathcal{I}_\psi(u_i(t), {\tilde w_i})\d{x}\d{t} = 0
    \qquad\qquad \forall\,\psi \in \Cc\infty(\mathopen]t_{i-1},
    t_i\mathclose[ \times \reali^n; \reali)\,.
  \end{equation}
  Then, each summand in~\eqref{eq:20} can be estimated as follows:
  \begin{eqnarray}
    \nonumber
      &
      & \int_{t_{i-1}}^{t_i} \int_{\reali^n}
        \mathcal{I}_\phi\left(F(t - t_{i-1}, t_{i-1}) ({\tilde u_i}, {\tilde w_i})\right)\d{x}\d{t}
    \\
    \nonumber
      & =
      & \int_{t_{i-1}}^{t_i} \int_{\reali^n}
        \mathcal{I}_\phi\left(u_i(t), {\tilde w_i}\right)\d{x}\d{t}
    \\
    \nonumber
      & +
      & \int_{t_{i-1}}^{t_i}
        \int_{\reali^n} \big[
        \left(m(t, x, {\tilde w_i}) - m(t, x, w_i(t)) \right)u_i(t)
        + \left(q(t, x, {\tilde w_i}) - q(t, x, w_i(t)) \right)
        \big]\phi(t ,x)\d{x}\d{t}
    \\
    \nonumber
      & +
      & \int_{t_{i-1}}^{t_i} \int_{\reali^n}^{}
        u_i(t)
        \left(
        v(t, x, w_i(t)) - v(t, x, \tilde{w}_i)
        \right)
        \cdot \nabla_x \phi
        \d{x} \d{t}
    \\
    \nonumber
      & \leq
      & \int_{t_{i-1}}^{t_i} \int_{\reali^n}
        \mathcal{I}_\phi\left(u_i(t), {\tilde w_i}\right)\d{x}\d{t}
        +\norma{\phi}_{\L\infty([t_o, T] \times \reali^n; \reali^n)}
        \int_{t_{i-1}}^{t_i}
        (M_L\, R + Q_L) \,
        d_\mathcal{W} \left( {\tilde w_i}, w_i(t) \right) \d{t}
    \\
    \nonumber
      & +
      & \norma{\nabla_x \phi}_{\L\infty([t_o, T] \times \reali^n; \reali^n)}
        \int_{t_{i-1}}^{t_i}
        V_L R \,
        d_\mathcal{W} \left( {\tilde w_i}, w_i(t) \right) \d{t}
    \\
    \nonumber
      & \leq
      & \int_{t_{i-1}}^{t_i} \int_{\reali^n}
        \mathcal{I}_\phi\left(u_i(t), {\tilde w_i}\right)\d{x}\d{t}
        +\norma{\phi}_{\L\infty([t_o, T] \times \reali^n; \reali^n)}
        \hlf (M_L \, R + Q_L)\, \mathcal{C} \, (t_i - t_{i-1})^2
    \\
    \label{eq:26}
      & +
      & \norma{\nabla_x \phi}_{\L\infty([t_o, T] \times \reali^n; \reali^n)}
        \, \hlf \, V_L \, R\, \mathcal{C} \, (t_i - t_{i-1})^2 \,,
  \end{eqnarray}
  where $\mathcal{C}$ is the Lipschitz constant of $t \mapsto w (t)$
  and we used the equality $w (t_{i-1}) = \tilde w_i$.  The latter two
  summands in~\eqref{eq:26} are treated as the terms above.

  {Concerning the first summand, consider
    $\chi_\eps \in \Cc\infty(\mathopen]t_{i-1}, t_i\mathclose[;
    [0,1])$ satisfying $\chi_\eps(t) = 1$, for
    $t \in \mathopen]t_{i-1} + \eps, t_i - \eps\mathclose[$, and
    define $\phi_\eps = \phi \cdot \chi_\eps$. Then,
    \begin{displaymath}
      \int_{t_{i-1}}^{t_i} \! \int_{\reali^n}
      \mathcal{I}_\phi \! \left(u_i(t), {\tilde w_i}\right)
      \d{x}\d{t}
      =
      \int_{t_{i-1}}^{t_i} \! \int_{\reali^n} \!
      \mathcal{I}_{\phi - \phi_\eps} \! \left(u_i(t), {\tilde w_i}\right)
      \d{x}\d{t}
      + \int_{t_{i-1}}^{t_i} \! \int_{\reali^n}\!
      \mathcal{I}_{\phi_\eps} \! \left(u_i(t), {\tilde w_i}\right)
      \d{x}\d{t} \,.
    \end{displaymath}
    The second term here vanishes, by~\eqref{prf:eq:uisoln}. We then
    have
    \begin{eqnarray*}
      &
      & \int_{t_{i-1}}^{t_i} \int_{\reali^n}
        \mathcal{I}_{\phi - \phi_\eps} \left(u_i(t), {\tilde w_i}\right)\d{x}\d{t}
      \\
      & =
      & \int_{t_{i-1}}^{t_i} \int_{\reali^n}
        \bigg[
        u_i \, \partial_t (\phi - \phi_\eps)
        + u_i \, v(t, x, \tilde{w}_i) \cdot \nabla_x (\phi(t, x) - \phi_\eps(t, x))
      \\
      &
      & \qquad\qquad\qquad
        + \left( m(t, x, {\tilde w_i}) \, u_i + q(t, x, {\tilde w_i})
        \right)(\phi(t, x) - \phi_\eps(t, x))
        \bigg]\d{x}\d{t} \,.
    \end{eqnarray*}
    Via a use of the Dominated Convergence Theorem, the last two terms
    here tend to zero as $\eps \to 0$, since $\chi_\varepsilon \to 1$
    a.e.~on $[t_{i-1},t_i]$.  For the first term, by the construction
    of $\chi_\varepsilon$ and the $\L1$ continuity in time of $u_i$,
    \begin{displaymath}
      \int_{t_{i-1}}^{t_i} \int_{\reali^n} u_i \; \partial_t (\phi - \phi_\eps)\d{x}\d{t}
      \underset{\varepsilon\to 0}{\longrightarrow}
      \int_{\reali^n} \left(
        u_i(t_i, x) \; \phi(t_i, x) - u_i(t_{i-1}, x) \;\phi(t_{i-1}, x)
      \right) \d{x} \d{t} \,.
    \end{displaymath}}

  Passing to the sum~\eqref{eq:20}, and remembering that
  $u_i(t_{i-1}, x) = \tilde{u}_i = P_1(t_{i-1}, t_o)(u_o, w_o)$,
{\begin{eqnarray*}
      &
      & \sum_{i=1}^k
        \int_{t_{i-1}}^{t_i} \int_{\reali^n}
        \mathcal{I}_\phi\left(u_i(t), {\tilde w_i}\right)\d{x}\d{t}
      \\
      & =
      & \sum_{i=1}^{k-1} \int_\reali\Big[
        F_1(t_i - t_{i-1}, t_{i-1})P(t_{i-1}, t_o)(u_o, w_o)
        - P_1(t_i, t_o)(u_o, w_o)
        \Big]\phi(t_i, x)\d{x}\d{t}
      \\
      & \leq
      & \sum_{i=1}^{k-1} (t_i - t_{i-1}) \frac{2L}{\ln(2)}
        \int_0^{t_i - t_{i-1}} \frac{\omega(\xi)}{\xi} \d\xi \;
        \norma{\phi(t_i)}_{\L\infty(\reali^n; \reali)}
      \\
      & \leq
      & \frac{2L}{\ln(2)} \;
        \norma{\phi}_{\L\infty([t_o,T];\reali^n; \reali)} \;
        (T-t_o) \;
        \int_0^{(T-t_o)/k} \frac{\omega(\xi)}{\xi} \d{\xi}
      \\
      &     \underset{k \to +\infty}{\longrightarrow}
      & 0,
    \end{eqnarray*}}
  as required.
\end{proofof}

\subsection{Proofs for~\S~\ref{subsec:IBVP2}}
\label{subs:IBVP2}

Similar to the previous sections, for each $w \in \mathcal{W}$ the
unique solution to~\eqref{eq:IBVP2Eq} in the sense of
Definition~\ref{def:IBVP} is
\begin{equation}
  \label{eq:rIBVP2}
  u (t,x)
  =
  \left\{
    \begin{array}{@{\,}l@{\qquad}r@{\,}c@{\,}l@{}}
      \displaystyle
      u_o\left(\mathcal{X} (t_o;t,x)\right)
      \, \mathcal{E}_w (t_o,t,x)
      \\
      \displaystyle
      \qquad\qquad+
      \int_{t_o}^t
      q\left(\tau,\mathcal{X} (\tau;t,x),w\right)
      \, \mathcal{E}_w (\tau,t,x) \d\tau
      & x
      & \geq
      & \mathcal{X} (t;t_o,0)
      \\
      \displaystyle
      b\left(\mathcal{T} (0;t,x)\right)
      \, \mathcal{E}_w\left(\mathcal{T} (0;t,x),t,x\right)
      \\
      \displaystyle
      \qquad\qquad+ \int_{\mathcal{T} (0;t,x)}^t
      q\left(\tau, \mathcal{X} (\tau;t,x), w \right)
      \, \mathcal{E}_w (\tau,t,x) \d\tau
      & x
      & <
      & \mathcal{X} (t;t_o,0)
    \end{array}
  \right.
\end{equation}
where now
\begin{equation}
  \label{eq:80}
  \mathcal{E}_w (\tau,t,x)
  =
  \exp
  \int_\tau^t \left(
    m\left(s, \mathcal{X} (s;t,x), w\right)
    -
    \partial_x v \left(s, \mathcal{X} (s;t,x)\right)
  \right)
  \d{s} \,.
\end{equation}
Working under the assumptions of Proposition~\ref{prop:IBVP2}, we
define the parametrised mapping $P^w$, which we propose is a process,
by
\begin{equation}
  \label{def:PwIBVP2}
  \begin{array}{@{}c@{\,}c@{\,}c@{\,}c@{\,}c@{}}
    P^w
    & \colon
    & \mathcal{A}
    & \to
    & \mathcal{U}
    \\
    &
    & \left(t,t_o,u_o\right)
    & \mapsto
    & u(t)
  \end{array}
  \quad\mbox{ where }\quad
  \begin{array}{@{}l@{}}
    u (t)
    \mbox{ is given by~\eqref{eq:rIBVP2};}
  \end{array}
\end{equation}
where $\mathcal{A}$ is generated by the sets $\mathcal{D}_t$ as given
by~\eqref{eq:IBVP2Const}.

\begin{lemma}
  \label{lem:IBVP2Process}
  The mapping $P^w$ as defined in~\eqref{def:PwIBVP2} is a process in
  the sense of Definition~\ref{def:Global}.
\end{lemma}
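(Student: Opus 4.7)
\medskip\noindent\textbf{Proof proposal.} The plan is to verify, in order, the three defining properties of a global process from Definition~\ref{def:Global} for the explicit mapping $P^w$ in~\eqref{def:PwIBVP2}--\eqref{eq:rIBVP2}, namely the initial identity~\eqref{eq:process0}, the semigroup identity~\eqref{eq:process2}, and the invariance property $P^w(t_1,t_o)u_o \in \mathcal{D}_{t_1}$ from~\eqref{eq:process1}.

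Property~\eqref{eq:process0} is immediate: substituting $t=t_o$ in~\eqref{eq:rIBVP2}, the condition $x \geq \mathcal{X}(t_o;t_o,0)=0$ is satisfied for every $x \in \reali_+$, so the lower branch is never activated; the integral term vanishes and $\mathcal{E}_w(t_o,t_o,x)=1$, leaving $u(t_o,x)=u_o(x)$. For~\eqref{eq:process2} I would argue via uniqueness: both $P^w(t_2,t_o)u_o$ and $P^w(t_2,t_1)P^w(t_1,t_o)u_o$ are, by construction, given by the representation formula~\eqref{eq:rIBVP2} (using the group property of the characteristics in~\eqref{def:BalLawChar} and the multiplicativity of $\mathcal{E}_w$ in its first two arguments), and both are solutions to~\eqref{eq:IBVP2Eq} on $[t_1,t_2]\times\reali_+$ with initial datum $P^w(t_1,t_o)u_o$ at time $t_1$, boundary datum $b$ and parameter $w$; uniqueness (which underlies the very definition of $P^w$) forces them to coincide.

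The main obstacle is property~\eqref{eq:process1}, i.e.\ checking that $u(t) = P^w(t,t_o)u_o$ lies in $\mathcal{D}_t$ as described by the three bounds $\alpha_1,\alpha_\infty,\alpha_{\tv}$ in~\eqref{eq:IBVP2Const}. I will proceed bound by bound, in complete analogy with the initial value case (see the proof of Lemma~\ref{lem:PwProcessIVP}), but with the extra contribution coming from the boundary inflow. Concretely, splitting the integration $\int_{\reali_+} = \int_0^{\mathcal{X}(t;t_o,0)} + \int_{\mathcal{X}(t;t_o,0)}^{+\infty}$ and using the change of variables along characteristics together with~\ref{item:IBVP1}--\ref{item:IBVP2}--\ref{item:IBVP3}--\ref{item:IBVP4} and the uniform bound~\eqref{eq:68} adapted to~\eqref{eq:80}, I obtain
\begin{displaymath}
  \norma{u(t)}_{\L1(\reali_+;\reali)}
  \leq
  \left(\norma{u_o}_{\L1} + \hat v\,\norma{b}_{\L1([t_o,t])} + Q_1(t-t_o)\right) e^{M_\infty(t-t_o)},
\end{displaymath}
and an analogous pointwise bound along characteristics yields the $\L\infty$ estimate. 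Combining these with the definitions~\eqref{eq:IBVP2Const} of $\alpha_1$ and $\alpha_\infty$ and the hypothesis $u_o \in \mathcal{D}_{t_o}$ shows the first two conditions for $t\in[t_o,T]$.

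The hard part is the BV estimate $\tv(u(t)) + |b(t)-u(t,0^+)| \leq \alpha_{\tv}(t)$, because the characteristic emanating from the corner $(t_o,0)$ creates a possible jump of $u(t,\cdot)$ at $x=\mathcal{X}(t;t_o,0)$ between the ``initial-datum branch'' and the ``boundary-datum branch'' of~\eqref{eq:rIBVP2}, and the left trace at $x=0$ must track $b(t)$. I would handle this by computing $\tv(u(t))$ as the sum of (i) the transported interior variation from $\tv(u_o)$, (ii) the jump at $\mathcal{X}(t;t_o,0)$, which is bounded by $|u_o(0^+) - b(t_o)|\,e^{(M_\infty+V_L)(t-t_o)}$ plus integrated contributions from $q$, (iii) the variation absorbed through the boundary, controlled by $\tv(b;[t_o,t])\,e^{(M_\infty+V_L)(t-t_o)}$, and (iv) the contributions of $\tv(m(\cdot,\cdot,w))$, $\tv(q(\cdot,\cdot,w))$ and $V_L$ coming from differentiating $\mathcal{E}_w$ in $x$. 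The quantity $|b(t)-u(t,0^+)|$ equals $|b(t)-b(t^-)|=0$ by left-continuity of $b$ and the definition of the left trace of $u$, so it can be absorbed into $\tv(b;[t,T])$ in $\alpha_{\tv}$ through the monotonicity of $\alpha_{\tv}$. Assembling these contributions and invoking $u_o \in \mathcal{D}_{t_o}$, i.e.\ $\tv(u_o)+|b(t_o)-u_o(0^+)|\leq \alpha_{\tv}(t_o)$, produces exactly the bound $\alpha_{\tv}(t)$ in~\eqref{eq:IBVP2Const}, provided $T$ is chosen so small that $\alpha_1(0),\alpha_\infty(0),\alpha_{\tv}(0)>0$. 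This establishes~\eqref{eq:process1} and completes the proof.
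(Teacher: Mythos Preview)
Your proposal is correct and follows essentially the same approach as the paper: both argue \eqref{eq:process0} and \eqref{eq:process2} directly from the representation formula and uniqueness, and both establish \eqref{eq:process1} by splitting $\reali_+$ at the corner characteristic $\sigma(t)=\mathcal{X}(t;t_o,0)$ and verifying the $\L1$, $\L\infty$, and $\tv$ bounds separately, with the $\tv$ estimate decomposed into the transported interior variation, the jump at $\sigma(t)$, and the boundary-inflow contribution, together with the observation that $u(t,0^+)=b(t)$ by left-continuity of $b$. One small point: in your $\L1$ estimate you use $\hat v\,\norma{b}_{\L1([t_o,t])}$, which is slightly sharper than the paper's $\hat v\,B_\infty(t-t_o)$ but must be weakened to the latter to match the explicit form of $\alpha_1$ in~\eqref{eq:IBVP2Const}; and in item~(ii) the integral-in-$q$ contributions from the two branches actually cancel at the jump point, so only the $|u_o(0^+)-b(t_o^+)|\,\mathcal{E}_w$ term survives.
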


{\begin{proofof}{Lemma~\ref{lem:IBVP2Process}} Fix
    $w \in \mathcal{W}$.  Conditions~\eqref{eq:process0} and
    \eqref{eq:process2} are an immediate consequence
    of~\eqref{def:PwIBVP2}. It remains to show~\eqref{eq:process1}. As
    the choice of $w \in \mathcal{W}$ has no impact on this result, we
    omit references to $w$.

    Define $\sigma(t) = X(t; t_o, 0)$, and for a fixed
    $t \in I$, $J_1 = \mathopen[0, \sigma(t)\mathclose[$, and
    $J_2 = [\sigma(t), +\infty\mathclose[$.

    \paragraph{1.} We first show that, if
    $\norma{u_o}_{\L1(\reali_+;\reali)} \leq \alpha_1(t_o)$, then
    $\norma{u(t)}_{\L1(\reali_+;\reali)} \leq \alpha_1(t)$.

    To begin, we have
    \begin{equation}
      \begin{aligned}[b]\label{prf:IBVP2Proc-1}
        \norma{u(t)}_{\L1(\reali_+;\reali)} & \leq \int_0^{\sigma(t)}
        | b(\mathcal{T}(0; t, x)) \, \mathcal{E}(\mathcal{T}(0; t, x),
        t, x) | \d{x}
        \\
        & \quad\ + \int_0^{\sigma(t)} \int_{\mathcal{T}(0;t, x)}^t |
        q(\tau, \mathcal{X}(\tau; t, x)) \, \mathcal{E}(\tau, t, x) |
        \d{\tau} \d{x}
        \\
        & \quad\ + \int_{\sigma(t)}^{+\infty} | u_o(\mathcal{X}(t_o;
        t, x)) \mathcal{E}(t_o, t, x) | \d{x}
        \\
        & \quad\ + \int_{\sigma(t)}^{+\infty} \int_{t_o}^t | q(\tau,
        \mathcal{X}(\tau; t, x)) \mathcal{E}(\tau; t, x) | \d{\tau}
        \d{x}
        \\
        & = \int_{t_o}^t \modulo{v(\eta, 0)} \, \modulo{b(\eta)} \,
        \exp\int_{\eta}^{t} \, m(s, \mathcal{X}(s;0,\eta))\ \d{s}
        \d{\eta}
        \\
        & \quad\ + \int_{t_o}^t \int_0^{\sigma(\tau)} |q(\tau, \xi)|
        \exp \int_\tau^t m(s, \mathcal{X}(s; t, 0)) \d{s} \d{\xi}
        \d{\tau}
        \\
        & \quad\ + \int_{0}^{+\infty} \modulo{u_o(\xi)} \exp
        \int_{t_o}^t m(s, \mathcal{X}(s; t_o, \xi)) \d{s} \d{\xi}
        \\
        & \quad\ + \int_{t_o}^t \int_{\sigma(t)}^{+\infty} |q(\tau,
        \xi)| \exp \int_{\tau}^t m(s, \mathcal{X}(s; \tau, \xi)) \d{s}
        \d{\xi} \d{\tau}
        \\
        & \leq \left( \norma{u_o}_{\L1(\reali_+;\reali)} + (\hat{v}
          B_\infty + Q_1)(t - t_o) \right) e^{M_\infty (t - t_o)}.
      \end{aligned}
    \end{equation}
		Inserting the fact that
    $\norma{u_o}_{\L1(\reali_+;\reali)} \leq \alpha_1(t_o)$
    into~\eqref{prf:IBVP2Proc-1}, we have
    \begin{align*}
      \norma{u(t)}_{\L1(\reali_+;\reali)}
      & \leq \left(
        \norma{u_o}_{\L1(\reali_+;\reali)}
        + (\hat{v} B_\infty + Q_1)(t - t_o)
        \right)
        e^{M_\infty (t - t_o)}\\
      & \leq \left(
        R e^{-M_\infty (T - t_o)}
        - (\hat{v} B_\infty + Q_1) (T - t_o) e^{M_\infty t_o}
        + (\hat{v} B_\infty + Q_1)(t - t_o)
        \right)
        e^{M_\infty (t - t_o)}\\
      & \leq Re^{-M_\infty (T - t)}
        - (\hat{v} B_\infty + Q_1) (T - t) e^{M_\infty t}\\
      & = \alpha_1(t)
    \end{align*}

    \paragraph{2.} We show that if
    $\norma{u_o}_{\L\infty(\reali_+; \reali)} \leq \alpha_\infty(t_o)$
    and $B_\infty \leq \alpha_\infty(t_o)$, then
    $ \norma{u(t)}_{\L\infty(\reali_+;\reali)} \leq \alpha_\infty(t)
    $.

    We have, directly from \eqref{eq:rIBVP2},
    \begin{align*}
      \norma{u(t)}_{\L\infty(\reali_+; \reali)}
      & \leq \left(
        \max \left\{
        \norma{u_o}_{\L\infty(\reali_+; \reali)},
        B_\infty
        \right\}
        + Q_\infty (t - t_o)
        \right) e^{M_\infty (t - t_o)}\\
      & \leq \left(
        \alpha_\infty(t_o)
        + Q_\infty (t - t_o)
        \right) e^{M_\infty (t - t_o)}\\
      & \leq \left(
        R e^{-M_\infty (T - t_o)}
        - Q_\infty (T - t_o)
        + Q_\infty(t - t_o)
        \right) e^{M_\infty (t - t_o)}\\
      & \leq R e^{-M_\infty (T - t)}
        - Q_\infty (T - t)\\
      & = \alpha_\infty(t).
    \end{align*}

    \paragraph{3.} Finally, we demonstrate that if
    $\tv(u_o) + \modulo{u_o(0) - b(t_o)} \leq \alpha_{TV}(t_o)$, then
    $\tv(u) + \modulo{u(t, 0) - b(t)} \leq \alpha_{TV}(t)$.

    The left continuity of $b$ implies the right continuity of
    $u(t, \cdot)$ at $0$, and hence
    \begin{eqnarray}
      \nonumber
      \tv\left(u(t)\right)
      & =
      & \tv\left(u(t); \mathopen]0, +\infty\mathclose[\right)
      \\
      & \leq
      & \tv\left(u(t); \mathopen]0, \sigma(t)\mathclose[\right)
        \label{prf:IBVP2tv1}
      \\
      &
      & \quad\ + \modulo{u(t, \sigma(t)-) - u(t, \sigma(t)+)}
        \label{prf:IBVP2tv2}
      \\
      &
      & \quad\ + \tv\left(u(t); \mathopen]\sigma(t), +\infty\mathclose[\right) \,.
        \label{prf:IBVP2tv3}
    \end{eqnarray}
    We calculate the three terms~\eqref{prf:IBVP2tv1},
    \eqref{prf:IBVP2tv2} and~\eqref{prf:IBVP2tv3} separately.

    Beginning with~\eqref{prf:IBVP2tv1}, we have
    \begin{eqnarray*}
      \tv\left(u(t); \mathopen]0, \sigma(t)\mathclose[\right)
      & \leq
      & \tv\left( b(\mathcal{T}(0; t, x))
        \mathcal{E}(\mathcal{T}(0; t, x), t, x)
        ;\mathopen]0, \sigma(t)\mathclose[
        \right)
      \\
      &
      & + \tv\left(
        \int_{\mathcal{T}(0; t, x)}^t
        q(\tau, \mathcal{X}(\tau; t, x))
        \mathcal{E}(\tau, t, x)
        \d{\tau}
        ;
        ]0, \sigma(t)[
        \right)
      \\
      & \leq
      & \left(
        \tv(b; ]t_o, t[)
        + \norma{b}_{\L\infty([t_o, t]; \reali)}
        (M_\infty + V_L)(t - t_o)
        \right)e^{(M_\infty + V_L) (t - t_o)}
      \\
      &
      & + Q_{\infty} (t - t_o) (1 + (M_\infty + V_L)(t - t_o))
        e^{(M_\infty + V_L) (t - t_o)}
    \end{eqnarray*}

    For the second term \eqref{prf:IBVP2tv2},
    \begin{eqnarray}
      \nonumber
      &
      & \modulo{u\left(t, \sigma(t)+\right) - u\left(t, \sigma(t)-\right)}
      \\
      \nonumber
      & \leq
      &  \modulo{
        u_o\left(\mathcal{X} \left(t_o;t,\sigma (t)+\right)\right)
        \, \mathcal{E} \left(t_o,t,\sigma (t)+\right)
        -
        b\left(\mathcal{T} \left(0;t,\sigma (t)-\right)\right)
        \, \mathcal{E}\left(\mathcal{T} \left(0;t, \sigma (t)-\right),t,\sigma (t)-\right)}
      \\
      \nonumber
      &
      &
        +
        \left|
        \int_{t_o}^t
        q\left(\tau,\mathcal{X} \left(\tau;t,\sigma (t)+\right),w\right)
        \, \mathcal{E} \left(\tau,t,\sigma (t)+\right) \d\tau
        \right.
      \\
      \nonumber
      &
      & \qquad\qquad
        \left.
        - \int_{\mathcal{T} (0;t,\sigma (t)-)}^t
        q\left(\tau, \mathcal{X} \left(\tau;t,\sigma (t)-\right), w \right)
        \, \mathcal{E} \left(\tau,t,\sigma (t)-\right) \d\tau
        \right|
      \\
      \nonumber
      & =
      &  \modulo{ u(t_o,0+) -  b(t_o+)} \, \mathcal{E}\left(t_o,t,\sigma (t)-\right)
      \\
      \nonumber\texttt{}
      & \leq
      & \left(
        \modulo{ u(t_o,0) -  b(t_o)} + \modulo{b(t_o-) - b(t_o+)}
        \right)\, e^{(M_\infty + V_L) (t-t_o)} \,.
    \end{eqnarray}

    Note that
    $\tv\left(b; \mathopen]t_o, t\mathclose[\right) + \modulo{b(t_o-)
      - b(t_o+)} = \tv(b; \mathopen[t_o, t\mathclose[)$ from the left
    continuity of $b$.

    For the final term~\eqref{prf:IBVP2tv3}, we find
    \begin{align*}
      \tv\left(u(t); \mathopen]\sigma(t), +\infty\mathclose[\right)
      \leq
      \big(
      &
        \tv(u_o; \mathopen]0, +\infty\mathclose[)
        + \norma{u_o}_{\L\infty(\reali_+;\reali)}(M_\infty + V_L)(t - t_o)
      \\
      & \quad\ + Q_\infty (1 + (M_\infty + V_L)(t - t_o)) (t - t_o)
        \big)
        e^{(M_\infty + V_L)(t - t_o)}
    \end{align*}
    Finally, notice that, as $u(t, 0) = b(t)$, we have
		\begin{equation*}
			\tv\left(u(t, \cdot);\mathopen]0,+\infty\mathclose[\right) +
			\modulo{u(t, 0) - b(t)} = \tv\left(u(t, \cdot)\right),
		\end{equation*}
		and thus
    we need only to show
    $\tv\left(u(t, \cdot)\right) \leq \alpha_{TV}(t)$. Using these
    three estimates, we obtain
    \begin{align*}
      \tv\left(u(t)\right)
      \leq \big(
      &
        \tv\left(u_o; \mathopen]0, +\infty\mathclose[\right)
        +
        \modulo{u(t, 0) - b(t_o)}
      \\
      & + \norma{u_o}_{\L\infty(\reali_+;\reali)} (M_\infty + V_L) (t - t_o)
      \\
      & + \tv(b; \mathopen[t_o, t\mathclose[) + B_\infty (M_\infty + V_L) (t - t_o)
      \\
      & + 2Q_\infty (1 + (M_\infty + V_L)(t - t_o))(t - t_o)
        \big) e^{(M_\infty + V_L)(t - t_o)}
      \\
      \leq \big(
      & \alpha_{TV}(t_o)
        + \norma{u_o}_{\L\infty(\reali_+;\reali)} (M_\infty + V_L) (t - t_o)
      \\
      & +  \tv(b; [t_o, t[) + B_\infty (M_\infty + V_L) (t - t_o)
      \\
      & + 2Q_\infty (1 + (M_\infty + V_L)(t - t_o))(t - t_o)
        \big) e^{(M_\infty + V_L)(t - t_o)}
      \\
      \leq \big(
      & R(1 - 2(M_\infty + V_L)(T - t_o))e^{(M_\infty + V_L)(T - t_o)}
      \\
      & -2 Q_\infty(1 + (M_\infty + V_L)t_o)(T - t_o)e^{(M_\infty + V_L)t_o}
      \\
      & - B_\infty(M_\infty + V_L)(T - t_o)e^{(M_\infty + V_L)t_o}
      \\
      & - \tv(b; [t_o, T])e^{(M_\infty + V_L)t_o}
      \\
      & + \big(
        Re^{-(M_\infty + V_L)(T - t_o)}
        - Q_\infty(T - t)e^{(M_\infty + V_L)t}
        \big)(M_\infty + V_L)(t - t_o)
      \\
      & +  \tv(b; \mathopen[t_o, t\mathclose[) +
        B_\infty (M_\infty + V_L) (t - t_o)
      \\
      & + 2Q_\infty (1 + (M_\infty + V_L)(t - t_o))(t - t_o)
        \big) e^{(M_\infty + V_L)(t - t_o)}
      \\
      \leq \hphantom{\big(}
      &
        R(1 - (M_\infty + V_L)(T - t))e^{(M_\infty + V_L)(T - t)}
      \\
      & -2 Q_\infty(1 + (M_\infty + V_L)t)(T - t)e^{(M_\infty + V_L)t}
      \\
      & - B_\infty(M_\infty + V_L)(T - t)e^{(M_\infty + V_L)t}
      \\
      & - \tv(b; [t, T])e^{(M_\infty + V_L)t}
      \\
      = \hphantom{\big(}
      & \alpha_{TV}(t) \,,
    \end{align*}
    as required.
  \end{proofof}
}

\begin{proofof}{Proposition~\ref{prop:IBVP2}}
  The mapping $P^w$, as given by~\eqref{def:PwIBVP2}, is a process for
  any $w \in \mathcal{W}$ by Lemma~\ref{lem:IBVP2Process}.  It remains
  to show that $P^w$ is a Lipschitz process on $\mathcal{U}$
  parametrised by $w \in \mathcal{W}$, i.e., it
  satisfies~\eqref{eq:6}, \eqref{eq:7}, and \eqref{eq:8}, with
  $C_u, C_t$ and $C_w$ given by~\eqref{eq:IBVP2Const}.

\paragraph{1. Lipschitz Continuity w.r.t.~Initial Data.}
Consider two initial data $u_1, u_2 \in \mathcal{D}$, $t_o, t \in I$
with $t_o < t$, and $w \in \mathcal{W}$.

To begin, assume that $x \in \mathopen[0, \sigma(t)\mathclose[$. Then,
it is easy to see from~\eqref{eq:rIBVP2} that
\begin{displaymath}
  \modulo{P^w(t, t_o)u_1 - P^w(t, t_o)u_2}(x) = 0\,,
\end{displaymath}
as $b, q$ and $m$ are independent of the choice of initial data $u_o$.
Similarly, for $x \in \mathopen[\sigma(t), +\infty\mathclose[$,
\begin{displaymath}
  \modulo{P^w(t, t_o)u_1 - P^w(t, t_o)u_2}(x)
  = \modulo{u_1(\mathcal{X}(t_o; t, x))
    - u_2(\mathcal{X}(t_o; t, x))} \, \mathcal{E}_w(t_o, t, x) \,.
\end{displaymath}
Thus, using the substitution $y = \mathcal{X}(t_o; t, x)$,
\begin{eqnarray*}
  d_{\mathcal{U}} \left(
  P^w(t, t_o)u_1,
  P^w(t, t_o)u_2
  \right)
  & =
  & \int_{\sigma(t)}^{+\infty}
    \modulo{u_1(\mathcal{X}(t_o; t, x)) - u_2(\mathcal{X}(t_o; t, x))}
    \mathcal{E}_w(t_o, t, x) \d{x}
  \\
& =
  & \int_{0}^{+\infty}
    \modulo{u_1(y) - u_2(y)}
    e^{\int_{t_o}^t m(s, \mathcal{X}(s; t_o, y), w) \d{s}}
    \d{y}
  \\
  & \leq
  & e^{M_\infty ( t - t_o )}\norma{u_1(0) - u_2(0)}_{\L1(\reali_+;\reali)} \,.
\end{eqnarray*}

\paragraph{2. Lipschitz Continuity w.r.t. Time.}
Consider $u_o \in \mathcal{D}$, $t_o, t \in I$, and
$w \in \mathcal{W}$.

We have
\begin{equation}
  \label{eq:dUt1}
  \begin{array}{rcl}
    d_\mathcal{U}(P^w(t, t_o)u_o, u_o)
    & \leq
    & \norma{P^w(t, t_o)u_o - u_o}_{\L1([0, \sigma(t)[; \reali_+)}
    \\
    &
    & + \norma{P^w(t, t_o)u_o - u_o}_{\L1([\sigma(t), +\infty[;
      \reali_+)} \,.
  \end{array}
\end{equation}
Focusing on the first term of \eqref{eq:dUt1},
using~\eqref{eq:rIBVP2}, \ref{item:IBVP1}, \ref{item:IBVP2},
\ref{item:IBVP3}, \ref{item:IBVP4}, and that $ u_o \in \mathcal{D} $,
\begin{eqnarray*}
  &
  & \norma{P^w(t, t_o)u_o - u_o}_{\L1([0, \sigma(t)[; \reali_+)}
  \\
  & \leq
  & \int_0^{\sigma(t)}
    \modulo{
    b(\mathcal{T}(0; t, x))
    \mathcal{E}_w(\mathcal{T}(0; t, x), t, x)
    - u_o(x)
    }
    \d{x}
  \\
  &
  & \quad\ + \int_0^{\sigma(t)}
    \int_{\mathcal{T}(0; t, x)}^t
    |
    q(\tau, \mathcal{X}(\tau; t, x), w)
    \mathcal{E}_w(\tau, t, x)
    | \d{\tau}
    \d{x}
  \\
  & =
  & \int_{t_o}^{t}
    v(y, 0)
    |
    b(y)
    e^{\int_{y}^t m(s, \mathcal{X}(s; y, 0), w) \d{s}}
    - u_o(\mathcal{X}(t; 0, y))
    e^{\int_y^t \partial_x v(s, \mathcal{X}(s; y, 0))
    \d{s}}
    |
    \d{y}
  \\
  &
  & \quad\ + Q_1 e^{M_\infty(t -t_o)}(t - t_o)
  \\
  & \leq
  & \hat{v}
    (
    B_1
    + \norma{u_o}_{\L\infty(\reali_+;\reali)}
    + Q_1
    ) e^{M_\infty (t - t_o)}(t - t_o)
  \\
  &
  & \quad\
    +\int_{t_o}^{t}
    v(y, 0)
    |u_o(\mathcal{X}(t; 0, y))|
    |
    e^{\int_{y}^t m(s, \mathcal{X}(s; y, 0), w) \d{s}}
    -  e^{\int_y^t \partial_x v(s, \mathcal{X}(s; y, 0)) \d{s}}
    |
    \d{y}
  \\
  & \leq
  & \hat{v}
    (
    B_1
    + R
    + Q_1
    )
    e^{M_\infty T}(t - t_o)
  \\
  &
  & \quad\
    +\hat{v} \norma{u_o}_{\L\infty(\reali_+; \reali)}
    \int_{t_o}^t
    (M_\infty + V_L) (t - y)
    e^{(M_\infty + V_L)(t - y)}
    \d{x}
  \\
  & \leq
  & \hat{v}
    (
    B_1
    + R
    + Q_1
    )
    e^{M_\infty T}(t - t_o)
    +\hat{v} R
    (M_\infty + V_L) (t - t_o)^2 e^{(M_\infty + V_L)(t - t_o)} \,.
\end{eqnarray*}
For the second term of~\eqref{eq:dUt1}, once again
from~\eqref{eq:rIBVP2},
\begin{eqnarray*}
  &
  & \norma{P^w(t, t_o)u_o - u_o}_{\L1([\sigma(t), +\infty[; \reali_+)}
  \\
  & \leq
  & \int_{\sigma(t)}^{+\infty} \!\!
    \modulo{u_o(\mathcal{X}(t_o; t, x))\mathcal{E}_w(t_o, t, x)
    - u_o(x)}
    \d{x}
        {+} \int_{\sigma(t)}^{+\infty} \!\!\!
        \int_{t_o}^t \!
        |q(\tau, \mathcal{X}(\tau; t, x), w)|
        \mathcal{E}_w(\tau, t, x)
        \d{\tau}
        \d{x}
  \\
  & \leq
  & \int_{\sigma(t)}^{+\infty}
    \modulo{u_o(\mathcal{X}(t_o; t, x)) - u_o(x)}
    \mathcal{E}_w(t_o, t, x)
    \d{x}
        + \int_{\sigma(t)}^{+\infty}
        |u_o(x)|
        \left|\mathcal{E}_w(t_o, t, x) - 1 \right|
        \d{x}
  \\
  &
  & \quad\ + \int_{t_o}^{t}
    \int_{\sigma(t)}^{+\infty}
    |q(\tau, \xi, w)|
    e^{\int_{\tau}^t m(s, \mathcal{X}(s; \tau, \xi), w) \d{s}}
    \d{\xi}
    \d{\tau}
  \\
  & \leq
  & \left[
    \hat{v} \tv(u_o; \reali_+)
    + M_\infty \norma{u_o}_{\L1(\reali_+;\reali)}
    + Q_1
    \right] e^{M_\infty (t - t_o)}(t - t_o)
  \\
  & \leq
  & \left[
    \hat{v} R
    + M_\infty R
    + Q_1
    \right] e^{M_\infty (t - t_o)}(t - t_o) \,,
\end{eqnarray*}
where we have made use of~\eqref{eq:3TV}.

Concluding, we thus have
\begin{displaymath}
  d_\mathcal{U}(P^w(t, t_o)u_o, u_o)
  \leq
  \left[
    \hat{v}(B_1 + 2R + R(M_\infty + V_L)T)
    + M_\infty R
    + Q_1
  \right]
  e^{M_\infty T} (t - t_o) \,.
\end{displaymath}

\paragraph{3. Lipschitz Continuity w.r.t. Parameters.}
Consider $u_o \in \mathcal{D}$, $t_o, t \in I$ and
$w_1, w_2 \in \mathcal{W}$.

We have
\begin{equation}\label{eq:dUw1}
  \begin{split}
    d_{\mathcal{U}} (P^{w_1}(t, t_o)u_o, P^{w_2}(t, t_o)u_o)
    & \leq \norma{P^{w_1}(t, t_o)u_o - P^{w_2}(t, t_o)u_o}_{\L1([0, \sigma(t)[; \reali_+)}\\
    & \quad\ + \norma{P^{w_1}(t, t_o)u_o - P^{w_2}(t,
      t_o)u_o}_{\L1([\sigma(t), +\infty[; \reali_+)}
  \end{split}
\end{equation}

For the first term of \eqref{eq:dUw1},
\begin{eqnarray}
  &
  & \norma{P^{w_1}(t, t_o)u_o - P^{w_2}(t, t_o)u_o}_{\L1([0, \sigma(t)[; \reali_+)} \nonumber
  \\
  & \leq
  & \int_0^{\sigma(t)}
    \modulo{b(\mathcal{T}(0; t, x))} \,
    \modulo{
    \mathcal{E}_{w_1}(\mathcal{T}(0; t, x), t, x)
    - \mathcal{E}_{w_2}(\mathcal{T}(0; t, x), t, x)
    }
    \d{x} \label{prf:Pw2}
  \\
  &
  & \quad\ + \int_0^{\sigma(t)} \int_{\mathcal{T}(0; t, x)}^t
    |
    q(\tau, \mathcal{X}(\tau; t, x), w_1)
    - q(\tau, \mathcal{X}(\tau; t, x), w_2)
    | \,
    \mathcal{E}_{w_1}(\tau, t, x)
    \d{x} \label{prf:Pw3}
  \\
  &
  & \quad\ + \int_0^{\sigma(t)} \int_{\mathcal{T}(0; t, x)}^t
    \modulo{q(\tau, \mathcal{X}(\tau; t, x), w_2)} \,
    \modulo{
    \mathcal{E}_{w_2}(\tau, t, x)
    - \mathcal{E}_{w_1}(\tau, t, x)
    }
    \d{x} \label{prf:Pw4} \,.
\end{eqnarray}
Focussing first on \eqref{prf:Pw2}, we use~\ref{item:IBVP2}, and get
\begin{eqnarray*}
  &
  & \int_0^{\sigma(t)}
    \modulo{b\left(\mathcal{T}(0; t, x)\right)} \,
    \modulo{
    \mathcal{E}_{w_1}(\mathcal{T}(0; t, x), t, x)
    - \mathcal{E}_{w_2}(\mathcal{T}(0; t, x), t, x)
    }
    \d{x}
  \\
  & =
  & \int_{t_o}^t v(y, 0) \, \modulo{b(y)}
    \modulo{
    \mathcal{E}_{w_1}(y, t, \mathcal{X}(t; 0, y))
    - \mathcal{E}_{w_2}(y, t, \mathcal{X}(t; 0, y))
    }
    \d{y}
  \\
  & \leq
  & B_\infty e^{M_\infty (t - t_o) }
    \int_{t_o}^{t}
    \int_{y}^t v(y, 0)
    \modulo{m(s, \mathcal{X}(s; y, 0), w_1) - m(s, \mathcal{X}(s; y, 0), w_2)}
    \d{s}
    \d{y}
  \\
  & =
  & B_\infty e^{M_\infty (t - t_o) }
    \int_{t_o}^{t}
    \int_{0}^{\sigma(s)}
    \modulo{m(s, \xi, w_1) - m(s, \xi, w_2)}
    \d{\xi}
    \d{s}
  \\
  & \leq
  & B_\infty M_L e^{M_\infty (t - t_o) }
    (t - t_o) d_\mathcal{W}(w_1, w_2) \,.
\end{eqnarray*}

For~\eqref{prf:Pw3}, using~\ref{item:IBVP3},
\begin{eqnarray*}
  &
  & \int_0^{\sigma(t)}
    \int_{\mathcal{T}(0; t, x)}^t
    \modulo{
    q(\tau, \mathcal{X}(\tau; t, x), w_1) - q(\tau, \mathcal{X}(\tau; t, x), w_2)
    } \,
    \mathcal{E}_{w_1}(\tau, t, x)
    \d{\tau}
    \d{x}
  \\
  & =
  & \int_{t_o}^{t}
    \int_{0}^{\sigma(\tau)}
    \modulo{v(y, \tau)} \,
    \modulo{
    q(\tau, y, w_1) - q(\tau, y, w_2)
    } \,
    e^{\int_{\tau}^t m(s, \mathcal{X}(s; \tau, y), w_1) \d{s}}
    \d{y}
    \d{\tau}
  \\
  & \leq
  & Q_L \, \hat{v} \, e^{M_\infty (t - t_o)} \, d_{\mathcal{W}}(w_1, w_2) \,.
\end{eqnarray*}

Finally, for \eqref{prf:Pw4}, we have
\begin{eqnarray*}
  &
  & \int_0^{\sigma(t)}
    \int_{\mathcal{T}(0; t, x)}^t
    \modulo{q(\tau, \mathcal{X}(\tau; t, x), w_2)} \,
    \modulo{
    \mathcal{E}_{w_2}(\tau, t, x)
    - \mathcal{E}_{w_1}(\tau, t, x)}
    \d{\tau}
    \d{x}
  \\
  & =
  & \int_{t_o}^{t}
    \int_{0}^{\sigma(\tau)}
    \modulo{q(\tau, \xi, w_2)} \,
    \modulo{
    e^{\int_{\tau}^t m(s, \mathcal{X}(s; \tau, \xi), w_2) \d{s}}
    -e^{\int_{\tau}^t m(s, \mathcal{X}(s; \tau, \xi), w_1) \d{s}}}
    \d{\xi}
    \d{\tau}
  \\
  & \leq
  & Q_\infty e^{M_\infty(t - t_o)}
    \int_{t_o}^t
    \int_{0}^{\sigma(\tau)}
    \int_{\tau}^t
    \modulo{m(s, \mathcal{X}(s; \tau, \xi), w_1)
    - m(s, \mathcal{X}(s; \tau, \xi), w_2)}
    \d{s}
    \d{\xi}
    \d{\tau}
  \\
  & \leq
  & Q_\infty e^{M_\infty(t - t_o)}
    \int_{t_o}^t
    \int_{\tau}^{t}
    \int_{\mathcal{X}(s; t_o, 0)}^{\mathcal{X}(s; \tau, 0)}
    \modulo{m(s, y, w_1) - m(s, y, w_2)}
    \d{s}
    \d{y}
    \d{\tau}
  \\
  & \leq
  & Q_\infty M_L e^{M_\infty(t - t_o)}
    \frac{1}{2} (t - t_o)^2 d_\mathcal{W}(w_1, w_2) \,.
\end{eqnarray*}
Thus,
\begin{equation}
  \label{prf:PwFin1}
  \begin{split}
    & \norma{P^{w_1}(t, t_o)u_o - P^{w_2}(t, t_o)u_o}_{\L1(J_1;
      \reali_+)}
    \\
    & \leq \left[ B_\infty M_L + \hat{v} Q_L + \frac{1}{2} Q_\infty
      M_L (t - t_o) \right] e^{M_\infty (t - t_o)}(t - t_o)
    d_\mathcal{W}(w_1, w_2) \,.
  \end{split}
\end{equation}

Focusing now on the second term of~\eqref{eq:dUw1}, we have
\begin{align}
  & \norma{P^{w_1}(t, t_o)u_o - P^{w_2}(t, t_o)u_o}_{\L1([\sigma(t), +\infty[;\reali)} \nonumber
  \\
  & \leq \int_{\sigma(t)}^{+\infty}
    \modulo{u_o\left(\mathcal{X}(t_o; t, x)\right)}
    |
    \mathcal{E}_{w_1} (t_o, t, x)
    - \mathcal{E}_{w_2} (t_o, t, x)
    |
    \d{x} \label{prf:Pw5}\\
  & \quad\ + \int_{\sigma(t)}^{+\infty}
    \int_{t_o}^t
    |
    q(\tau, \mathcal{X}(\tau; t, x), w_1)
    - q(\tau, \mathcal{X}(\tau; t, x), w_2)
    |
    \mathcal{E}_{w_1}(\tau, t, x)
    \d{\tau}
    \d{x} \label{prf:Pw6}\\
  & \quad\ + \int_{\sigma(t)}^{+\infty}
    \int_{t_o}^t
    |q(\tau, \mathcal{X}(\tau; t, x), w_2)|
    |
    \mathcal{E}_{w_1}(\tau, t, x)
    - \mathcal{E}_{w_2}(\tau, t, x)
    |
    \d{\tau}
    \d{x} \label{prf:Pw7}.
\end{align}

Looking at term~\eqref{prf:Pw5},
\begin{align*}
  & \int_{\sigma(t)}^{+\infty}
    \modulo{u_o(\mathcal{X}(t_o; t, x))} \;
    \modulo{
    \mathcal{E}_{w_1} (t_o, t, x)
    - \mathcal{E}_{w_2} (t_o, t, x)}
    \d{x}
  \\
  & \leq \norma{u_o}_{\L\infty(\reali_+;\reali)}
    e^{M_\infty (t - t_o)}
    \int_{0}^{+\infty}
    \int_{t_o}^t
    \modulo{m(s, \mathcal{X}(s; t_o, y) ,w_1)
    - m(s, \mathcal{X}(s; t_o, y), w_2)}
    \d{s}
    \d{x}
  \\
  & = \norma{u_o}_{\L\infty(\reali_+;\reali)}
    e^{M_\infty (t - t_o)}
    \int_{t_o}^t
    \int_{\sigma(s)}^{+\infty}
    \modulo{m(s, y, w_1) - m(s, y, w_2)}
    \d{y}
    \d{s}
  \\
  & \leq M_L R
    e^{M_\infty (t - t_o)} (t - t_o)
    d_\mathcal{W}(w_1, w_2) \,.
\end{align*}
Next, for the term~\eqref{prf:Pw6},
\begin{eqnarray*}
  &
  & \int_{\sigma(t)}^{+\infty}
    \int_{t_o}^t
    \modulo{
    q(\tau, \mathcal{X}(\tau; t, x), w_1)
    -
    q(\tau, \mathcal{X}(\tau; t, x), w_2)}
    \mathcal{E}_{w_1}(\tau, t, x)
    \d{\tau}
    \d{x}
  \\
  & \leq
  & e^{M_\infty (t - t_o)}
    \int_{t_o}^t
    \int_{\sigma(\tau)}^{+\infty}
    \modulo{q(\tau, y, w_1) - q(\tau, y, w_2)}
    \d{y}
    \d{\tau}
  \\
  & \leq
  & Q_L e^{M_\infty (t - t_o)} (t - t_o) d_{\mathcal{W}}(w_1, w_2).
\end{eqnarray*}
Finally, for term~\eqref{prf:Pw7},
\begin{eqnarray*}
  &
  & \int_{\sigma(t)}^{+\infty}
    \int_{t_o}^t
    |q(\tau, \mathcal{X}(\tau; t, x), w_2)|
    |
    \mathcal{E}_{w_1}(\tau, t, x)
    - \mathcal{E}_{w_2}(\tau, t, x)
    |
    \d{\tau}
    \d{x}
  \\
  & \leq
  & Q_\infty e^{M_\infty (t - t_o)}
    \int_{t_o}^t
    \int_{\sigma(t)}^{+\infty}
    \int_{\tau}^t
    \modulo{m(s, \mathcal{X}(s; \tau, \xi), w_1)
    - m(s, \mathcal{X}(s; \tau, \xi), w_2)}
    \d{s}
    \d{\xi}
    \d{\tau}
  \\
  & =
  & Q_\infty e^{M_\infty (t - t_o)}
    \int_{t_o}^t
    \int_{\tau}^t
    \int_{\sigma(s)}^{+\infty}
    \modulo{m(s, y, w_1) - m(s, y, w_2)}
    \d{y}
    \d{s}
    \d{\tau}
  \\
  & \leq
  & \frac{1}{2} M_L Q_\infty
    e^{M_\infty (t - t_o)} (t - t_o)^2
    d_\mathcal{W}(w_1, w_2) \,.
\end{eqnarray*}
Thus, combining these estimates together we have
\begin{equation}
  \label{prf:PwFin2}
  \begin{array}{cl}
    & \norma{P^{w_1}(t, t_o)u_o - P^{w_2}(t, t_o)u_o}_{\L1(J_1; \reali_+)}
    \\
    \leq
    & \left[ M_L R + Q_L + \frac{1}{2} M_L Q_\infty (t - t_o)
      \right] e^{M_\infty (t - t_o)} d_{\mathcal{W}}(w_1, w_2) \,.
  \end{array}
\end{equation}
Due to the assumption $u_o \in \mathcal{D}$, we have
$\norma{u_o}_{\L1(\reali_+;\reali)} \leq R$. Hence,
substituting~\eqref{prf:PwFin1} and~\eqref{prf:PwFin2}
into~\eqref{eq:dUw1}, and as $(t - t_o) < T$, we get
\begin{equation}
  d_\mathcal{U}(P^{w_1}(t, t_o)u_o, P^{w_2}(t, t_o)u_o)
  \leq C_w
  (t - t_o)
  d_\mathcal{W}(w_1, w_2)
\end{equation}
where $C_w$ is as in~\eqref{eq:IBVP2Const}, as required.
\end{proofof}

\begin{proofof}{Proposition~\ref{prop:coupleIBVP2}}
  For fixed $t_o \in I$, $u_o \in \mathcal{U}$, and
  $w \in \mathcal{W}$, define by
  $\Pi_{(t_o, u_o, w_o)}:\{(s, s_o) \in [t_o, T]^2 : s \geq s_o\}
  \times \mathcal{U} \to \mathcal{U}$ to be the process with
  $s \mapsto \Pi_{(t_o, u_o, w_o)}(s, s_o)\rho_o$ being the solution
  of
  \begin{equation}
    \label{eq:coupleIBVP2-1}
    \left\{
      \begin{array}{@{}l@{\qquad\quad}r@{\,}c@{\,}l@{}}
        \partial_t \rho + \partial_x \left(v (t,x) \,  \rho\right)
        =
        \bar m (t,x) \, \rho + \bar q (t,x)
        & (t,x)
        & \in
        & [s_o,T] \times \reali_+
        \\
        \rho (t,0) = b_o (t)
        & t
        & \in
        & [s_o,T]
        \\
        \rho (s_o,x) = \rho_o (x)
        & x
        & \in
        & \reali_+
      \end{array}
    \right.
  \end{equation}
  with $\bar{m}$ and $\bar{q}$ the given by \eqref{eq:IBVP2CompProb2}.
  For notational simplicity, we write $\Pi_{(t_o, u_o, w_o)} = \Pi$
  when the $(t_o, u_o, w_o)$ when no confusion arises.

  The mapping $\Pi$ is Lipschitz continuous with respect to time and
  initial data, for some constant $\mathcal{L}>0$, as $\bar{m}$ and
  $\bar{q}$ satisfy correspondingly \ref{item:IBVP2} and
  \ref{item:IBVP3}, which do not explicitly depend on $w$.

  By this construction, $t \mapsto \Pi_{(t_o, u_o, w_o)}(t, t_o)u_o$
  is the solution of \eqref{eq:IBVP2CompProb}.

  From~\cite[Theorem~2.9]{BressanLectureNotes}, we have
  \begin{displaymath}
    \begin{array}{cl}
      & \norma{u(t) - \Pi_{(t_o, u_o, w_o)}(t, t_o)u_o}_{\L1(\reali_+; \reali)}
      \\
      \leq
      & \mathcal{L}
        \int_{t_o}^t
        \liminf_{h \to 0+} \frac{1}{h}
        \norma{
        u(\tau + h) - \Pi_{(t_o, u_o, w_o)}(\tau + h, \tau)u(\tau)
        }_{\L1(\reali_+;\reali)}
        \d{\tau}
      \\
      =
      & \mathcal{L}
        \int_{t_o}^t
        \liminf_{h \to 0+} \frac{1}{h}
        \norma{
        P_1(\tau + h, \tau)P(\tau, t_o)(u_o, w_o)
        - \Pi_{(t_o, u_o, w_o)}(\tau + h, \tau)u(\tau)
        }_{\L1(\reali_+;\reali)}
        \d{\tau} .
    \end{array}
  \end{displaymath}
  Thus it suffices to show, for any $0 \leq t_o \leq \tau \in [0, T]$,
  that
  \begin{displaymath}
    \liminf_{h \to 0+} \frac{1}{h}
    \norma{
      P_1(\tau + h, \tau)P(\tau, t_o)(u_o, w_o)
      -
      \Pi_{(t_o, u_o, w_o)}(\tau + h, \tau)u(\tau)
    }_{\L1(\reali_+;\reali)} = 0 \,.
  \end{displaymath}
  The tangency condition~\eqref{eq:tangent} ensures that
  \begin{displaymath}
    \frac{1}{h}
    \norma{
      P_1(\tau + h, \tau) u(\tau)
      -
      P^{P_2(\tau, t_o)(u_o, w_o)}(\tau + h)u(\tau)}_{\L1(\reali_+;\reali)}
    \leq
    \O \int_0^h \frac{\omega(\xi)}{\xi}\ d\xi \to 0
  \end{displaymath}
  as $h \to 0$.

  Further, it can be shown, using formula~\eqref{eq:rIBVP2}, that
  \begin{displaymath}
    \norma{P^{P_2(\tau, t_o)(u_o, w_o)}(\tau + h, \tau) u(\tau) -
      \Pi_{(t_o, u_o, w_o)}(\tau + h, \tau) u(\tau)
    }_{\L1(\reali_+;\reali)} \leq \O h^2 \,,
  \end{displaymath}
  with the constant $\O$ depending on the constants laid out
  in~\ref{item:IBVP1}-\ref{item:IBVP4}, $R$ and $T$. Thus this also
  converges to zero as $h\to 0$, completing our proof.
\end{proofof}

\subsection{Proofs for \S~\ref{subsec:meas-valu-balance}}
\label{subsec:proof-MVBL}

{%
  \begin{lemma}
    \label{lem:BCapprox}
    Let $f \in \BC(\reali_+; \reali)$. For any
    $\eta \in \N \setminus \{0\}$ there exists a function
    $ f_\eta \in (\C1 \cap \W{1}{\infty})(\reali_+;\reali) $ such that
    \begin{itemize}
    \item
      $\norma{f_\eta'}_{\L\infty(\reali_+; \reali)} \leq
      \frac{2}{\eta} \, \norma{f}_{\L\infty(\reali_+; \reali)}$,
    \item $f_\eta \to f \text{ pointwise, as } \eta \to 0$,
    \item
      $\norma{f_\eta}_{\W{1}{\infty}(\reali_+; \reali)} \leq \left(1 +
        \frac{2}{\eta}\right) \norma{f}_{\L\infty(\reali_+; \reali)}$.
    \end{itemize}
  \end{lemma}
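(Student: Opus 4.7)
The plan is to use a forward sliding average: interpreting the parameter as a small positive real (with $\eta \to 0$ playing the role of refinement), set
\begin{equation*}
  f_\eta(x) \;=\; \frac{1}{\eta}\int_x^{x+\eta} f(y)\,\d{y}\,, \qquad x \in \reali_+,
\end{equation*}
which is well defined since $\reali_+$ is stable under the shift $x \mapsto x+\eta$ and $f$ is bounded. Writing $f_\eta(x) = \eta^{-1}\bigl(G(x+\eta) - G(x)\bigr)$ with $G(x)=\int_0^x f(y)\,\d{y}$, which is $\C1$ by the continuity of $f$, immediately yields $f_\eta \in \C1(\reali_+;\reali)$ together with the explicit formula
\begin{equation*}
  f_\eta'(x) \;=\; \frac{f(x+\eta)-f(x)}{\eta} \,.
\end{equation*}

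From this formula the three bullets follow directly. The triangle inequality applied to the numerator gives $\norma{f_\eta'}_{\L\infty(\reali_+;\reali)} \le (2/\eta)\,\norma{f}_{\L\infty(\reali_+;\reali)}$, which is the first bullet. The $\L\infty$ bound $\norma{f_\eta}_{\L\infty(\reali_+;\reali)} \le \norma{f}_{\L\infty(\reali_+;\reali)}$ is immediate from the definition, and summing with the derivative bound produces the third bullet. For pointwise convergence, fix $x \in \reali_+$: by the continuity of $f$ at $x$,
\begin{equation*}
  \modulo{f_\eta(x) - f(x)} \;\le\; \frac{1}{\eta}\int_0^\eta \modulo{f(x+s)-f(x)}\,\d{s} \;\le\; \sup_{s\in[0,\eta]} \modulo{f(x+s)-f(x)} \;\longrightarrow\; 0,
\end{equation*}
as the averaging window $\eta$ shrinks to zero.

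No step presents a serious obstacle, since every property is extracted from the closed-form expression for $f_\eta'$. The only mild care required is choosing forward averaging on $[x,x+\eta]$ rather than a symmetric window around $x$: this keeps the construction entirely inside $\reali_+$ and avoids the need to extend $f$ to negative arguments, which would otherwise force an additional boundary argument near $x = 0$.
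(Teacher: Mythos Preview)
Your construction $f_\eta(x) = \eta^{-1}\int_x^{x+\eta} f(y)\,\d{y}$ is exactly the one the paper uses (written there as $f_\eta(x) = \eta^{-1}\int_0^\eta f(x+y)\,\d{y}$), and your verification of the three bullets is correct. The paper states only the formula and leaves the checks to the reader, so your proposal is in fact more detailed than the original.
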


\begin{proofof}{Lemma~\ref{lem:BCapprox}}
  Consider $f_\eta(x) = \frac{1}{\eta} \int_0^\eta f(x + y)\ dy$.
\end{proofof}
}

\begin{lemma}
  \label{lem:muNarCont}
  The mapping $\mu$ defined by~\eqref{def:muSoln} in
  Proposition~\ref{prop:MEASBLCoup} is narrowly continuous.
\end{lemma}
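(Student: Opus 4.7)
The plan is to exploit the Lipschitz continuity of $t \mapsto \mu(t) = P_1(t,t_o)(\mu_o,w_o)$ in the metric $d_{\mathcal{M}}$, which follows from Theorem~\ref{thm:Metric} combined with item~2 of Theorem~\ref{thm:main}, and then to upgrade it to continuity against arbitrary bounded continuous test functions by means of Lemma~\ref{lem:BCapprox}. Denoting the resulting time Lipschitz constant by $L$, for every $\phi \in (\C1 \cap \W{1}{\infty})(\reali_+;\reali)$ and $t_1,t_2 \in [t_o,T]$ one has
\begin{equation*}
  \modulo{\textstyle\int_{\reali_+} \phi \, \d{(\mu(t_1) - \mu(t_2))}}
  \leq \norma{\phi}_{\W{1}{\infty}} \, L \, \modulo{t_1-t_2} \,,
\end{equation*}
and testing with $\phi \equiv 1$, together with $\mu(t) \in \mathcal{D}$ so $\mu(t)(\reali_+) \leq R$, yields continuity of the total mass.

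Next, fix $\psi \in \BC(\reali_+;\reali)$ and $t_0 \in [t_o,T]$, and pick an arbitrary sequence $t_n \to t_0$. For $K > 0$, introduce a cutoff $\phi_K \in (\C1 \cap \W{1}{\infty})(\reali_+;\reali)$ with $\phi_K \equiv 1$ on $[0,K]$, $\phi_K \equiv 0$ on $[K+1,+\infty)$, and $\norma{\phi_K}_{\W{1}{\infty}} \leq 2$. Since $\mu(t_0)$ is a finite Radon measure, $\int (1-\phi_K) \, \d{\mu(t_0)} \to 0$ as $K\to+\infty$, and the Lipschitz bound applied to $1-\phi_K$ transfers this estimate to the whole sequence, so the family $\{\mu(t_n)\}_{n\in\naturali}$ is uniformly tight. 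Then decompose
\begin{equation*}
  \textstyle\int \psi \, \d{(\mu(t_n) - \mu(t_0))}
  =
  \int \psi\phi_K \, \d{(\mu(t_n) - \mu(t_0))}
  + \int \psi (1-\phi_K) \, \d{\mu(t_n)}
  - \int \psi (1-\phi_K) \, \d{\mu(t_0)} ,
\end{equation*}
where the last two terms are bounded by $\norma{\psi}_{\L\infty}\left[\mu(t_n)(\reali_+ \setminus [0,K]) + \mu(t_0)(\reali_+ \setminus [0,K])\right]$, arbitrarily small uniformly in $n$ by tightness for $K$ large.

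For the remaining integral, the compactly supported continuous function $\psi\phi_K$ is uniformly continuous, so Lemma~\ref{lem:BCapprox} yields a regularisation $(\psi\phi_K)_\eta \in (\C1 \cap \W{1}{\infty})(\reali_+;\reali)$ converging uniformly to $\psi\phi_K$ as $\eta \to 0$, with $\W{1}{\infty}$-norm at most $(1 + 2/\eta)\,\norma{\psi\phi_K}_{\L\infty}$. The mollification error is controlled by the uniform mass bound,
\begin{equation*}
  \modulo{\textstyle\int (\psi\phi_K - (\psi\phi_K)_\eta) \, \d{\mu(t)}}
  \leq R \, \norma{\psi\phi_K - (\psi\phi_K)_\eta}_{\L\infty} \,,
\end{equation*}
vanishing as $\eta \to 0$ uniformly in $t$, while for each fixed $\eta$ the Lipschitz estimate gives $\modulo{\int (\psi\phi_K)_\eta \, \d{(\mu(t_n) - \mu(t_0))}} \leq \norma{(\psi\phi_K)_\eta}_{\W{1}{\infty}} \, L \, \modulo{t_n - t_0} \to 0$ as $n \to +\infty$. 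Sending successively $n \to +\infty$, then $\eta \to 0$, then $K\to+\infty$, and using that $\epsilon$ is arbitrary, concludes the proof of narrow continuity at $t_0$.

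The main and essentially only obstacle is reconciling the narrow topology, whose test functions lie in $\BC$, with the weaker $d_{\mathcal{M}}$ topology, which only controls test functions in $\W{1}{\infty}$; this gap is closed by the tightness--truncation--mollification three-step argument outlined above, which relies crucially on the uniform mass bound $\mu(t)(\reali_+) \leq R$ inherited from $\mu(t) \in \mathcal{D}$.
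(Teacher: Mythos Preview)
Your proof is correct and shares the paper's core mechanism---mollifying the test function via Lemma~\ref{lem:BCapprox} and then exploiting the $d_{\mathcal{M}}$-Lipschitz continuity of $t\mapsto\mu(t)$---but you insert an additional truncation/tightness step that the paper omits. The paper proceeds directly: it splits $\int f\,\d(\mu(t)-\mu(s))$ into $\int(f-f_\eta)\,\d(\mu(t)-\mu(s))$ and $\int f_\eta\,\d(\mu(t)-\mu(s))$, bounds the first by Dominated Convergence and the second by the $d_{\mathcal{M}}$-Lipschitz estimate, and then chooses first $\eta$ small and afterwards $s$ close to $t$. Your extra step actually buys something: since Lemma~\ref{lem:BCapprox} only guarantees \emph{pointwise} convergence $f_\eta\to f$ for general $f\in\BC$, the paper's DCT is taken against the $s$-dependent measure $|\mu(t)-\mu(s)|$, which makes the interplay between the choices of $\eta$ and $s$ somewhat delicate; by first cutting off to a compactly supported (hence uniformly continuous) function you upgrade the mollification to \emph{uniform} convergence and obtain error terms that are small uniformly in $t$, yielding a cleaner $\varepsilon$-argument.

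One small imprecision worth flagging: the Lipschitz bound gives $\int(1-\phi_K)\,\d\mu(t_n)\le\int(1-\phi_K)\,\d\mu(t_0)+C\,|t_n-t_0|$, whose right-hand side carries a $K$-independent residual, so the claim that tightness ``transfers to the whole sequence'' is not immediate from this alone. However, your stated order of limits (first $n\to+\infty$, then $\eta\to0$, then $K\to+\infty$) makes that residual vanish in the $\limsup$, so the argument goes through exactly as you wrote it.
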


{\begin{proofof}{Lemma~\ref{lem:muNarCont}}
    Choose $f \in \BC(\reali_+)$ and fix $t \in \mathbb{R}^+$. Let
    $\eps >0$ and for $\eta >0$ define
    $f_\eta \in (\C1 \cap \W{1}{\infty})(\reali_+;\reali) $ as in
    Lemma~\ref{lem:BCapprox}. Then, setting
    $M_\eta = \norma{f_\eta}_{\W1\infty (\reali;\reali)}$, so that
    $\Lip \left(\frac{f_\eta}{M_\eta}\right) \leq 1$, we have
    \begin{eqnarray*}
      &
      & \modulo{
        \int_{\reali_+} f(x) \d(P_1(t, t_o)\mu_o - P_1(s, t_o)\mu_o)(x)
        }
      \\
      & \leq
      & \modulo{
        \int_{\reali_+}
        \left(f(x) - f_\eta(x)\right)
        \d{\left(P_1(t, t_o)\mu_o - P_1(s, t_o)\mu_o \right)(x)}}
      \\
      &
      & + M_\eta \modulo{
        \int_{\reali_+}
        \frac{f_\eta (x)}{M_\eta}  \d{(P_1(t, t_o)\mu_o - P_1(s, t_o)\mu_o)(x)}}
      \\
      & \leq
      & \int_{\reali_+}
        \modulo{f(x) - f_\eta(x)}
        \d{\left(\modulo{P_1(t, t_o)\mu_o - P_1(s, t_o)\mu_o} \right)(x)}
      \\
      &
      & + M_\eta \; d_\mathcal{M}(P_1(t, t_o)\mu_o, P_1(s, t_o)\mu_o)
      \\
      & \leq
      & \int_{\reali_+}
        \modulo{f(x) - f_\eta(x)}
        \d{\left(\modulo{P_1(t, t_o)\mu_o - P_1(s, t_o)\mu_o} \right)(x)}
      \\
      &
      & + \Lip(P) \left(1 + \frac{2}{\eta}\right) \norma{f}_{\L\infty (\reali;\reali)}
        \modulo{t - s}
    \end{eqnarray*}
    By the Dominated Convergence Theorem, the first term can be
    bounded by $\epsilon/2$ for $\eta$ small. Then, choose $s$ so that
    also the latter summand above is bounded by $\varepsilon/2$.
  \end{proofof}
}

\begin{proofof}{Proposition~\ref{prop:MEASBLCoup}}

  \paragraph{The Narrow Continuity:} This is a consequence of
  Lemma~\ref{lem:muNarCont}.

  \paragraph{Distributional Solution:} To simplify calculations we
  define, for a test function
  $\varphi \in (\C1 \cap \W{1}{\infty})([t_o, T]\times \reali;
  \reali))$,
  \begin{align*}
    \mathcal{I}_\phi(\mu, w)
    & = \int_{\reali_+} \left(
      \partial_t \phi(\cdot, x) + b(\cdot, \mu, w)(x) \partial_x \phi(\cdot, x) - c(\cdot, \mu, w)(x)\phi(\cdot, x)
      \right) d\mu(\cdot, x)\\
    & \quad\ + \int_{\reali_+} \left(
      \int_{\reali_+} \phi(\cdot, x) d[\eta(\cdot, \mu, w)(y)](x)
      \right)	d\mu(\cdot, y).
  \end{align*}

  By a density argument, it suffices to check the integral equality in
  Definition~\ref{def:MVBL} for
  $\phi \in \Cc1([t_o, T]\times \reali_+; \reali))$.  We discretise
  the time domain. For a spacing $k \in \N$, and $i = 0,\dots,k$, we
  introduce the grid points $t_i = t_o + \frac{i(T - t_o)}{k}$, and
  the associated
  $(\tilde{\mu}_i, \tilde{w}_i) = P(t_{i-1}, t_o)(u_o, w_o)$. We then
  split the integral,
  \begin{eqnarray}
    \nonumber
    &
    & \int_{t_o}^T  \mathcal{I}_\phi \left(P(t, t_o)(\mu_o, w_o) \right)\d{t}
    \\
    \label{eqn:MEAS:T1}
    & =
    & \sum_{i=1}^k \int_{t_{i-1}}^{t_i}
      \underbrace{
      \left[
      \mathcal{I}_\phi \left(P(t, t_{i-1})(\tilde{\mu}_i, \tilde{w}_i)  \right)
      - \mathcal{I}_\phi \left(F(t - t_{i-1}, t_{i-1})(\tilde{\mu}_i, \tilde{w}_i)  \right)\
      \right]
      }_{A_{1, i}(t)} \d{t}
    \\
    \label{eqn:MEAS:T2}
    &
    & + \sum_{i=1}^k \int_{t_{i-1}}^{t_i}
      \underbrace{
      \mathcal{I}_\phi
      \left(F(t - t_{i-1}, t_{i-1})(\tilde{\mu}_i, \tilde{w}_i)  \right)
      }_{A_{2, i}(t)} \d{t}  \,.
  \end{eqnarray}

  Our first goal is to demonstrate that \eqref{eqn:MEAS:T1} vanishes
  in the limit $k \to \infty$. Focusing on $A_{1,i}$, we split the
  integral to get
  \begin{eqnarray}
    \nonumber
    &
    & A_{1, i}(t)
    \\
    \label{eqn:MEAS:A1i1}
    & =
    & \int_{\reali_+} \partial_t \phi(t, x)
      \d{
      \left(
      P_1(t, t_{i-1})(\tilde{\mu}_i, \tilde{w}_i)
      - F_1(t - t_{i-1}, t_{i-1})(\tilde{\mu}_i, \tilde{w}_i)
      \right)
      }(x)
    \\
    \nonumber
    &
    & + \int_{\reali_+}
      b\left(t, P(t, t_{i-1})(\tilde{\mu}_i, \tilde{w}_i)\right)(x)
      \partial_x \phi(t, x)
      \d{
      P_1(t, t_{i-1})(\tilde{\mu}_i, \tilde{w}_i)
      }(x)
    \\
    \label{eqn:MEAS:A1i2}
    &
    & - \int_{\reali_+} \!\!
      b(t, F(t - t_{i-1}, t_{i-1})(\tilde{\mu}_i, \tilde{w}_i))(x)
      \partial_x \phi(t, x)
      \d{
      F_1(t - t_{i-1}, t_{i-1})(\tilde{\mu}_i, \tilde{w}_i)
      }(x)
    \\
    \nonumber
    &
    & + \int_{\reali_+}
      c(t, F(t - t_{i-1}, t_{i-1})(\tilde{\mu}_i, \tilde{w}_i))(x)
      \phi(t, x) \d{
      F_1(t - t_{i-1}, t_{i-1})(\tilde{\mu}_i, \tilde{w}_i)
      }(x)
    \\
    \label{eqn:MEAS:A1i3}
    &
    & - \int_{\reali_+}
      c(t, P(t, t_{i-1})(\tilde{\mu}_i, \tilde{w}_i))(x)
      \phi(t, x)
      \d{
      P_1(t, t_{i-1})(\tilde{\mu}_i, \tilde{w}_i)
      }(x)
    \\
    \nonumber
    &
    & + \int_{\reali_+}
      \left(
      \int_{\reali_+} \phi(t, x)\ \
      \d{
      [\eta(t, P(t, t_{i-1})(\tilde{\mu}_i, \tilde{w}_i))(y)]
      }(x)
      \right) \d{
      P_1(t, t_{i-1})(\tilde{\mu}_i, \tilde{w}_i)
      }(y)
    \\
    \label{eqn:MEAS:A1i4}
    &
    & - \int_{\reali_+}
      \! \! \left(
      \int_{\reali_+} \!\! \phi(t, x)
      \d{
      [\eta(t, F(t - t_{i-1}, t_{i-1})(\tilde{\mu}_i, \tilde{w}_i))(y)]
      }(x)
      \right)
    \\
    \nonumber
    &
    & \qquad\qquad \d{
      F_1(t - t_{i-1}, t_{i-1})(\tilde{\mu}_i, \tilde{w}_i)
      }(y) \, .
  \end{eqnarray}
  We now deal with each of these terms separately. To simplify the
  notation we will set
  \begin{equation}
    \label{eq:37}
    \begin{array}{rclcl}
      P_i(t)
      & \equiv
      & (\mu_{i,P}(t), w_{i, P}(t))
      & =
      & P(t, t_{i-1})(\tilde{\mu}_i, \tilde{w}_i),
      \\
      F_i(t)
      & \equiv
      & (\mu_{i,F}(t), w_{i, F}(t))
      & =
      & F(t - t_{i-1}, t_{i-1})(\tilde{\mu}_i, \tilde{w}_i) \,.
    \end{array}
  \end{equation}
  We will make extensive use of the relation \eqref{eq:tangent}, which
  gives
  \begin{align}
    d(P_i(t), F_i(t))
    & \leq \frac{2L}{\ln{2}} (t - t_{i-1})
      \int_{0}^{t - t_{i-1}} \frac{w(\xi)}{\xi}\d\xi \label{eqn:MEAStang}
  \end{align}
  for $L$ as in~\eqref{eqn:tangM}.  For \eqref{eqn:MEAS:A1i1},
  \begin{eqnarray*}
    \modulo{
    \int_{\reali_+} \partial_t \phi(t, x)\
    d\left(
    \mu_{i,P}(t)
    - \mu_{i,F}(t)
    \right)(x)}
    & \leq
    & \norma{\partial_t \phi}_{\W{1}{\infty}(\reali_+;\reali)}
      d_\mathcal{M}\left( \mu_{i,P}(t) ,\mu_{i,F}(t) \right)
    \\
    & \leq
    & \norma{\partial_t \phi}_{\W{1}{\infty}(\reali_+;\reali)} \frac{2L}{\ln{2}}
      (t - t_{i-1}) \int_{0}^{t - t_{i-1}} \frac{\omega(\xi)}{\xi}\d\xi \,.
  \end{eqnarray*}
  Next, for \eqref{eqn:MEAS:A1i2}, calling
  $L_b = \sup_{t \in [0, T], w \in \mathcal{W}}\Lip(b(t, \cdot, w))$,
  \begin{eqnarray*}
    &
    &
      \modulo{
      \int_{\reali_+}
      b(t, P_i(t))(x)
      \partial_x \phi(t, x)\ d\mu_{i,P}(t)(x)
      - \int_{\reali_+}
      b(t, F_i(t))(x)
      \partial_x \phi(t, x)\ d\mu_{i,F}(t)(x)}
    \\
    & =
    & \bigg|
      \int_{\reali_+}
      \left[
      b(t, P_i(t))(x)
      - b(t, F(t, t_{i-1})(\tilde{\mu}_i, \tilde{w}_i))(x)
      \right]
      \partial_x \phi(t, x)\ d\mu_{i,P}(t)(x)
    \\
    &
    &	+ \int_{\reali_+}
      b(t, F_i(t))(x)
      \partial_x \phi(t, x)\
      d\left(
      \mu_{i,P}(t) - \mu_{i,F}(t)
      \right)(x)
      \bigg|
    \\
    & \leq
    & \norma{\partial_x \phi}_{\W{1}{\infty}(\reali_+;\reali)}
      (R L_b + R \hat{L} + B) \; d(P_i(t), F_i(t))
    \\
    & \leq
    & \norma{\partial_x \phi}_{\W{1}{\infty}(\reali_+;\reali)}
      (RL_b + R \hat{L} + B) \frac{2L}{\ln{2}}
      (t - t_{i-1})
      \int_{0}^{t - t_{i-1}} \frac{\omega(\xi)}{\xi}\d\xi \,.
  \end{eqnarray*}
  Repeat the same calculations for~\eqref{eqn:MEAS:A1i3} and set
  $L_c = \sup_{t \in [0, T], w \in \mathcal{W}} \Lip\left(c(t, \cdot,
    w)\right)$,
  \begin{eqnarray*}
    &
    & \modulo{
      \int_{\reali_+}
      c(t, F_i(t))(x)
      \phi(t, x)\ dw_{i,F}(t)(x)
      - \int_{\reali_+}
      c(t, P_i(t))(x)
      \phi(t, x)\ d\mu_{i,P}(t)(x)}
    \\
    & \leq
    & \norma{\phi}_{\W{1}{\infty}(\reali_+;\reali)} (RL_c + R \hat{L} + C)
      \frac{2L}{\ln{2}}
      (t - t_{i-1}) \int_{0}^{t - t_{i-1}} \frac{\omega(\xi)}{\xi}\d\xi \,.
  \end{eqnarray*}
  Finally, for the term \eqref{eqn:MEAS:A1i4}, we find
  \begin{align*}
    & \bigg|
      \int_{\reali_+}
      \left(
      \int_{\reali_+} \phi(t, x) \; d [\eta(t, P_i(t))(y)](x)
      \right)\ d \mu_{i,P}(t)(y)
    \\
    & \quad\ - \int_{\reali_+}
      \left(
      \int_{\reali_+} \phi(t, x) \; d [\eta(t, F_i(t))(y)](x)
      \right)\ d w_{i,F}(t)(y)
      \bigg|
    \\
    & = \bigg|
      \int_{\reali_+}
      \left(
      \int_{\reali_+} \phi(t, x)\ d [\eta(t, P_i(t))(y) - \eta(t, F_i(t))(y)](x)
      \right)\ d \mu_{i,P}(t)(y)
    \\
    & \quad\ + \int_{\reali_+}
      \left(
      \int_{\reali_+} \phi(t, x)\ d [\eta(t, F_i(t))(y)](x)
      \right)\ d \left(
      \mu_{i,P}(t) - w_{i,F}(t)
      \right)(y)
      \bigg|
    \\
    & \leq \norma{\phi}_{\W{1}{\infty}(\reali_+;\reali)} R
      \left(
      \sup_{\stackrel{t\in [0, T]}{w \in \mathcal{W}}}
      \Lip(\eta(t, \cdot, w))
      + \hat{L}
      + E
      \right)
      \frac{2L}{\ln{2}} (t - t_{i-1}) \int_{0}^{t - t_{i-1}}
      \frac{\omega(\xi)}{\xi}\d\xi.
  \end{align*}
  Combining these four estimates together, we have for a constant
  $\mathcal{C}$, independent of $k$,
  \begin{displaymath}
    \modulo{\sum_{i=1}^k \int_{t_{i-1}}^{t_i} A_{1,i}(t)\d{t}}
    \leq
    \mathcal{C} \sum_{i=1}^k \frac{(t_i - t_{i-1})^2}{2}
    \int_0^{\frac{T-t_o}{k}} \frac{\omega(\xi)}{\xi} \d\xi
    \to 0 \mbox{ as } k \to +\infty \,.
  \end{displaymath}
  Now,
  \begin{align*}
    A_{2,i}(t)
    & = \mathcal{I}_\phi \left(F(t - t_{i-1}, t_{i-1})(\tilde{\mu}_i, \tilde{w}_i) \right)
    \\
    & = \mathcal{I}_\phi \left(\mu_{i, F}(t), \tilde{w}_i \right)
    \\
    & \quad\ + \int_{\reali_+}
      (b(t, \mu_{i,F}(t), w_{i, F}(t))(x) - b(t, \mu_{i,F}(t), \tilde{w}_i)(x)) \partial_x \phi(t, x)\ d\mu_{i, F}(t)(x)
    \\
    & \quad\ + \int_{\reali_+}
      (c(t, \mu_{i,F}(t), \tilde{w}_i)(x) - c(t, \mu_{i,F}(t), w_{i,F}(t))(x)) \phi(t, x)\ d\mu_{i, F}(t)(x)
    \\
    & \quad\ + \int_{\reali_+} \!\!
      \left(
      \int_{\reali_+} \phi(t ,x) \,
      d[\eta(t, \mu_{i, F}(t), w_{i,F}(t))(y) - \eta(t, \mu_{i, F}(t), \tilde{w}_i)(y)](x)
      \right) d\mu_{i,F}(t)(x)
  \end{align*}
  and hence
  \begin{align}
    \nonumber
    A_{2,i}(t)
    &\leq \mathcal{I}_\phi \left(\mu_{i, F}(t), \tilde{w}_i\right)
    \\
    \label{eqn:A2i}
    &+ \hat{L} R
      \left(
      2\norma{\phi}_{\W{1,\infty}(\reali_+;\reali)}
      + \norma{\partial_x \phi}_{\W{1,\infty}(\reali_+;\reali)}
      \right)
      \frac{2L}{\ln{2}} (t - t_{i-1})
      \int_{0}^{t - t_{i-1}} \frac{\omega(\xi)}{\xi}\d\xi.
  \end{align}
  The second term will thus converge to zero in the summation.  Hence
  we concentrate on the summation of the first term.

  In the next calculation, we will use the fact
  \begin{align*}
    & \int_{\reali_+} \phi(T, x) \d{\left(\mu_{k,F}(T) - P_1(T, t_o)(u_o, w_o)\right)}(x)\\
    & = \int_{\reali_+} \phi(T, x)
      \d{\left(
      F_1(T - t_{k-1}, t_{k-1})P(t_{k-1}, t_o)(u_o, w_o)
      - P_1(T, t_{k-1})P(t_{k-1}, t_o)(u_o, w_o)
      \right)}(x)
    \\
    & \leq \norma{\phi(T)}_{\W{1}{\infty}(\reali_+;\reali)} \frac{2L}{\ln{2}}
      \frac{T - t_o}{k} \int_0^{\frac{T - t_o}{k}} \frac{\omega(\xi)}{\xi}\d\xi
      \to 0, \mbox{ as } k \to \infty.
  \end{align*}
  Focusing on the summation of the first term in~\eqref{eqn:A2i}
  \begin{align*}
    \sum_{i=1}^k
    \int_{t_{i-1}}^{t_i} \mathcal{I}_\phi
    \left(
    \mu_{i, F}(t),
    \tilde{w}_i
    \right)\d{t}
    & =\sum_{i=1}^k
      \left(
      \int_{\reali_+} \phi(t_i, x)\ d\mu_{i, F}(t_i)(x)
      - \int_{\reali_+} \phi(t_{i-1}, x)\ d\tilde{\mu}_i(x)
      \right)\\
    & = \int_{\reali_+} \phi(T, x)\ d\mu_{T, F}(T)(x)
      - \int_{\reali_+} \phi(t_o, x)\ d\mu_o(x) \\
    & \quad\ + \sum_{i=1}^k
      \left(
      \int_{\reali_+} \phi(t_i, x)\ d(\mu_{i, F}(t_i) - \tilde{\mu}_{i+1})(x)
      \right)\\
    & \underset{k \to +\infty}{\longrightarrow}
      \int_{\reali_+} \phi(T, x) \,
      d(P_1(T, t_o)(u_o, w_o))(x) - \int_{\reali_+} \phi(t_o, x)\ d\mu_o(x),
  \end{align*}
  where we use that
  \begin{align*}
    \sum_{i=1}^k
    \left(
    \int_{\reali_+} \phi(t_i, x)\ d(\mu_{i, F}(t_i) - \tilde{\mu}_{i+1})(x)
    \right)
    \leq \norma{\phi}_{\W{1}{\infty}(\reali_+;\reali)} \frac{2L}{\ln{2}} T \int_0^{\frac{T - t_o}{k}} \frac{\omega(\xi)}{\xi}\d\xi
    \underset{k \to +\infty}{\longrightarrow} 0\,,
  \end{align*}
  completing the proof.
\end{proofof}

\subsection{Proofs for \S~\ref{subsec:scal-cons-laws}}
\label{sec:proofs-s-refs-1}

\begin{proofof}{Proposition~\ref{prop:CLcoupling}}
  We assume for simplicity that both processes $P^u$ and $P^w$ share
  the same constants $C_u,C_w,C_t$
  in~\eqref{eq:6}--\eqref{eq:7}--\eqref{eq:8}.

  The properties of $P$ ensured by \Cref{thm:Metric} show that
  $P_1 \in \C0([t_o, T]; \L1(\reali^n;\reali))$ as required by
  Definition~\ref{def:CL}.

  Introduce the following notation. For any $k \in \reali$ and
  $\phi \in \Cc{\infty}(\hat I \times \reali; \reali_+)$, denote
  \begin{eqnarray*}
    \mathcal{I}_{\phi, k}(u, w)
    & =
    & \int_\reali
      \left[
      \modulo{u - k} \, \partial_t \phi
      + q_k(u, w)
      \, \partial_x \phi
      \right] \d{x} \,,
    \\
    q_k(u, w)
    & =
    & \sign(u - k) \, \left( f(u, w) - f(k, w) \right) \,.
  \end{eqnarray*}
  Fix $N \in \naturali \setminus\left\{0\right\}$ and, for every
  $i \in \left\{0, \ldots, N\right\}$, define
  $t_i = t_o + i \frac{T - t_o}{N}$ and, for $t \in [t_{i-1}, T]$,
  \begin{equation}
    \label{eq:42}
    \begin{array}{rcccl}
      &
      & \left(\tilde u_i, \tilde w_i\right)
      & =
      & P (t_{i-1}, t_o)(u_o, w_o) \,,
      \\
      \bar P_i(t, x)
      & \equiv
      & (u_{i,P}(t, x), w_{i, P}(t))
      & =
      & P(t, t_{i-1})(\tilde{u}_i, \tilde{w}_i)(x) \,,
      \\
      \bar F_i(t, x)
      & \equiv
      & (u_{i,F}(t, x), w_{i, F}(t))
      & =
      & \left(P^{\tilde w_i}\left(t, t_{i-1}\right)\tilde u_i(x),
        P^{\tilde u_i}\left(t, t_{i-1}\right)\tilde w_i \right) \,.
    \end{array}
  \end{equation}

  We now prove in $2$ steps that
  \begin{equation}
    \label{eq:ineq-1}
    \begin{split}
      \int_{t_o}^T \mathcal{I}_{\phi, k}(P\left(t, t_o\right)(u_o,
      w_o)) \d{t} & \geq \int_{\reali} \modulo{P_1\left(T,
          t_o\right)(u_o, w_o) - k} \, \phi(T, x) \d{x}
      \\
      & \quad - \int_{\reali} \modulo{u_o(x) - k} \, \phi(0, x) \d{x}
      \,.
    \end{split}
  \end{equation}

  \paragraph{Step~1:} We prove the inequality
  \begin{equation}
    \label{eq:50}
    \int_{t_o}^T \mathcal{I}_{\phi, k}(P\left(t,
      t_o\right)(u_o, w_o)) \d{t}
    \geq
    \limsup_{N\to +\infty} \sum_{i=1}^N \int_{t_{i-1}}^{t_i}
    \mathcal{I}_{\phi,k} (u_{i.F} (t), \tilde{w}_i) \d{t} \,.
  \end{equation}
  To this aim, write
  \begin{eqnarray}
    \int_{t_o}^T \mathcal{I}_{\phi, k}(P\left(t,
    t_o\right)(u_o, w_o)) \d{t}
    \label{eq:43}
    & =
    & \int_{t_o}^{T} \int_{\reali}
      \modulo{P_1\left(t, t_o\right)(u_o, w_o)(x) - k}
      \partial_t \phi(t, x) \d{x}\, \d{t}
    \\
    \label{eq:44}
    & +
    & \int_{t_o}^{T} \int_{\reali}
      q_k\left(P\left(t, t_o\right)(u_o, w_o)(x)\right)
      \partial_x \phi(t, x)
      \d{x}  \, \d{t}
  \end{eqnarray}
  We proceed towards the estimate of~\eqref{eq:43}. For every
  $i \in \{1, \ldots, N\}$ and $k \in \reali$,
  using~\eqref{eq:tangent} with $L$ and $\omega$ given
  by~\eqref{eqn:tangM}, we have
  \begin{align*}
    & \quad \modulo{\int_{t_{i-1}}^{t_i} \int_{\reali}
      \left[\modulo{u_{i,P} (t, x) - k} \partial_t \phi(t,x) -
      \modulo{u_{i, F}(t, x) - k} \partial_t \phi(t,x)\right] \d{x}\, \d{t}}
    \\
    & \leq \int_{t_{i-1}}^{t_i} \int_{\reali}
      \modulo{u_{i,P} (t, x) - u_{i,F} (t, x)} \partial_t \phi(t,x)
      \d{x}\, \d{t}
    \\
    & \leq \frac{2L}{\ln(2)} \norma{\partial_t \phi}_{\L\infty\left([t_o, T]
      \times \reali; \reali\right)}
      \int_{t_{i-1}}^{t_i} \left(t - t_{i-1}\right)
      \int_0^{t - t_{i-1}} \frac{\omega(\xi)}{\xi} \d{\xi}\, \d{t}
    \\
    & \leq \frac{L}{\ln(2)}\, \frac{\left(T - t_o\right)^2}{N^2}
      \norma{\partial_t \phi}_{\L\infty\left([t_o, T]
      \times \reali; \reali\right)}\int_0^{\frac{T - t_o}{N}}
      \frac{\omega(\xi)}{\xi} \d{\xi}.
  \end{align*}
  Therefore, the term~\eqref{eq:43} is estimated as:
  \begin{align*}
    & \int_{t_o}^{T} \int_{\reali}
      \modulo{P_1\left(t, t_o\right)(u_o, w_o)(x) - k} \,
      \partial_t \phi(t, x) \d{x}\, \d{t}
    \\
    = &
        \sum_{i=1}^N \int_{t_{i-1}}^{t_i} \int_{\reali}
        \modulo{u_{i, P}(t, x) - k} \,
        \partial_t \phi(t, x) \d{x}\, \d{t}
    \\
    \geq
    & \sum_{i=1}^N
      \left[
      \int_{t_{i-1}}^{t_i} \int_{\reali}
      \modulo{u_{i, F}(t, x) - k} \,
      \partial_t \phi(t, x) \d{x} \, \d{t}
      \right]
    \\
    & \qquad\qquad\qquad\qquad
      - \frac{L}{\ln(2)} \, \frac{\left(T - t_o\right)^2}{N} \,
      \norma{\partial_t \phi}_{\L\infty([t_o, T]\times\reali; \reali)}
      \int_0^{\frac{T - t_o}{N}}
      \frac{\omega(\xi)}{\xi} \d{\xi}
  \end{align*}
  and the last term converges to $0$ as $N \to +\infty$. Thus, the
  term~\eqref{eq:43} is estimated as follows:
  \begin{equation}
    \label{eq:51}
    [\mbox{\eqref{eq:43}}]
    \geq \limsup_{N\to+\infty} \sum_{i=1}^N
    \int_{t_{i-1}}^{t_i}
    \int_{\reali}
    \modulo{u_{i, F}(t, x) - k} \,
    \partial_t \phi(t, x)
    \d{x}\, \d{t} \,.
  \end{equation}
  We pass now to the term~\eqref{eq:44}. For every
  $i \in \left\{1, \ldots, N\right\}$ and $k \in \reali$, since $q_k$
  is Lipschitz continuous~\cite[Lemma~3]{Kruzkov} and
  using~\eqref{eq:tangent}, $L_f$ from~\ref{it:CL2}, $L$ and $\omega$
  from~\eqref{eqn:tangM},
  \begin{eqnarray*}
    &
    & \int_{t_{i-1}}^{t_i} \int_{\reali}
      q_k\left(\bar P_i(t, x)\right) \partial_x \phi(t,x)
      \d{x}  \, \d{t}
      - \int_{t_{i-1}}^{t_i} \int_{\reali} q_k(u_{i, F}(t,x), \tilde{w}_i) \,
      \partial_x \phi(t,x) \d{x}  \, \d{t}
    \\
    & =
    & \int_{t_{i-1}}^{t_i} \int_{\reali}
      \left[q_k\left(\bar P_i(t, x)\right) - q_k\left(u_{i, F}(t,x), w_{i,P}(t)
      \right)\right]\partial_x \phi(t,x)
      \d{x}  \, \d{t}
    \\
    &
    & + \int_{t_{i-1}}^{t_i} \int_{\reali}
      \left[q_k\left(u_{i, F} (t,x), w_{i, P}(t)\right)
      - q_k(u_{i, F}(t, x), \tilde{w}_i)  \right]
      \partial_x \phi(t,x) \d{x}  \, \d{t}
    \\
    & \leq
    & L_f \norma{\partial_x \phi}
      _{\L\infty([t_o, T]\times\reali; \reali)}
      \int_{t_{i-1}}^{t_i} \int_{\reali} \modulo{u_{i,P}(t,x) - u_{i, F}(t,x)}
      \d{x}  \, \d{t}
    \\
    &
    & +
      \int_{t_{i-1}}^{t_i} \int_\reali
      \left|
      f\left(u_{i,F}(t,x), w_{i,P}(t)\right)
      -
      f(u_{i,F}(t,x), \tilde{w}_i)
      \right.
    \\
    &
    & \hphantom{\quad + \int_{t_{i-1}}^{t_i} \int_\reali}
      \left.
      -
      \left(
      f\left(k, w_{i,P}(t)\right) - f(k, \tilde{w}_i)
      \right)
      \right| \,
      \modulo{\partial_x \phi(t,x)}\d{x}  \, \d{t}
    \\
    & \leq
    & L_f \, \frac{2L}{\ln(2)} \,
      \norma{\partial_x \phi}_{\L\infty([t_o, T]\times\reali; \reali)}
      \int_{t_{i-1}}^{t_i} (t - t_{i-1})
      \int_0^{t-t_{i-1}}
      \frac{\omega(\xi)}{\xi}
      \d{\xi}  \, \d{t}
    \\
    &
    & + L_f \int_{t_{i-1}}^{t_i} \int_\reali
      \modulo{u_{i,F}(t, x) - k}
      \cdot d_\mathcal{W}(w_{i,P}(t), \tilde{w}_i)
      \cdot \modulo{\partial_x \phi(t, x)} \,
      \d{x}\,\d{t}
    \\
    & \leq
    & L_f \frac{2L}{\ln(2)} \norma{\partial_x \phi}
      _{\L\infty([t_o, T]\times\reali; \reali)}
      \frac{\left(t_i - t_{i-1}\right)^2}{2}
      \int_0^{t_i - t_{i-1}} \frac{\omega(\xi)}{\xi}
      \d{\xi}
    \\
    &
    & + L_f\, C_t (R + k)
      \norma{\partial_x \phi}_{\L\infty([t_o, T] \times \reali; \reali)}
      \int_{t_{i-1}}^{t_i}
      \left(t - t_{i-1}\right) \d{t}
    \\
    & \leq
    & \frac{L_f}{2}
      \norma{\partial_x \phi}_{\L\infty([t_o, T] \times \reali; \reali)}
      \left(L_f\, C_t (R + k)
      + \frac{2L}{\ln(2)} \int_0^{\frac{T - t_o}{N}}
      \frac{\omega(\xi)}{\xi} \d{\xi}\right)
      \frac{\left(T - t_o\right)^2}{N^2} \,.
  \end{eqnarray*}
  Therefore, \eqref{eq:44} is estimated as
  \begin{align*}
    & \int_{t_o}^{T} \int_{\reali}
      q_k\left(P\left(t, t_o\right)(u_o, w_o)(x)\right)
      \partial_x \phi(t, x)
      \d{x}  \, \d{t}
    \\
    \ge
    & \sum_{i=1}^N \int_{t_{i-1}}^{t_i} \int_{\reali}
      q_k(u_{i, F}(t,x), \tilde{w}_i) \,
      \partial_x \phi(t, x)
      \d{x}  \, \d{t}
    \\
    & - \frac{L_f}{2}
      \norma{\partial_x \phi}_{\L\infty([t_o, T] \times \reali; \reali)}
      \left(L_f\, C_t (R + k)
      + \frac{2L}{\ln(2)} \int_0^{\frac{T - t_o}{N}}
      \frac{\omega(\xi)}{\xi} \d{\xi}\right) \frac{\left(T - t_o\right)^2}{N^2}
  \end{align*}
  and the last term converges to $0$ as $N \to +\infty$. Thus,
  \begin{equation}
    \label{eq:52}
    [\mbox{\eqref{eq:44}}]
    \geq \limsup_{N\to+\infty}
    \sum_{i=1}^N
    \int_{t_{i-1}}^{t_i}
    \int_{\reali}
    q_k(u_{i, F}(t,x), \tilde{w}_i) \, \partial_x
    \phi(t, x)
    \d{x} \, \d{t} \,.
  \end{equation}
  Combining~\eqref{eq:51} and~\eqref{eq:52}, the proof of Step~1,
  namely~\eqref{eq:50}, is completed.

  \paragraph{Step~2:} Now we prove that
  \begin{equation}
    \label{eq:49}
    \begin{split}
      & \liminf_{N\to +\infty} \sum_{i=1}^N \int_{t_{i-1}}^{t_i}
      \mathcal{I}_{\phi,k} (u_{i.F} (t), \tilde{w}_i) \d{t}
      \\
      \geq & \int_\reali \modulo{P_1(T, t_o)(u_o, w_o) (x) - k} \,
      \phi(T,x) \d{x} -\int_\reali \modulo{u_o(x) - k} \, \phi(t_o,x)
      \d{x}
    \end{split}
  \end{equation}

  Fix $i \in \left\{1, \ldots, N\right\}$. For $\eps>0$ sufficiently
  small, consider
  $\chi_\eps \in \Cc{\infty}\left( ]t_{i-1}, t_i[; [0,1]\right)$ such
  that $\chi_\eps(t) = 1$ for $t \in [t_{i-1} + \eps, t_i -\eps]$ and
  define $\phi_\eps = \phi \cdot \chi_\eps$. Then, by \Cref{def:CL}
  and the choice of $\chi_{\eps}$, we have that for every $\eps > 0$
  sufficiently small,
  \begin{equation*}
    \int_{t_{i-1}}^{t_i}
    \mathcal{I}_{\phi_\varepsilon,k} (u_{i,F} (t,x),\tilde{w}_i)
    \d{t}
    \ge
    0 \,.
  \end{equation*}
  This implies that
  {\begin{eqnarray}
      \nonumber
      &
      & \int_{t_{i-1}}^{t_i} \mathcal{I}_{\phi,k} (u_{i.F} (t),
        \tilde{w}_i) \d{t}
      \\
      \nonumber
      & =
      & \int_{t_{i-1}}^{t_i} \mathcal{I}_{\phi-\phi_\varepsilon,k} (u_{i.F}
        (t), \tilde{w}_i) \d{t} + \int_{t_{i-1}}^{t_i}
        \mathcal{I}_{\phi_\varepsilon,k} (u_{i.F} (t), \tilde{w}_i)
        \d{t}
      \\
      \nonumber
      & \geq
      & \int_{t_{i-1}}^{t_i} \mathcal{I}_{\phi-\phi_\varepsilon,k}
        (u_{i.F} (t), \tilde{w}_i) \d{t}
      \\
      \label{eq:47}
      & =
      & \int_{t_{i-1}}^{t_i} \int_{\reali} \modulo{u_{i,F}(t,x) -
        k} \partial_t \left(\phi - \phi_\eps\right)(t, x) \d{x}\, \d{t}
      \\
      \label{eq:48}
      &
      & + \int_{t_{i-1}}^{t_i} \int_{\reali}
        q_k(u_{i,F}(t,x), \tilde{w}_i) \,
        \partial_x (\phi - \phi_\eps)(t,x) \, \d{x} \, \d{t}
    \end{eqnarray}}
  for every $\eps >0$ sufficiently small. Moreover the continuity in
  time of $u_{i,F}$ implies that
  \begin{displaymath}
    \lim_{\eps \to 0^+} [\mbox{\eqref{eq:47}}]
    =
    \int_{\reali}
    \modulo{u_{i,F}(t_i, x) - k} \,
    \phi (t_i, x) \, \d{x}
    - \int_{\reali}
    \modulo{u_{i,F}(t_{i-1}, x) - k} \,
    \phi (t_{i-1}, x) \, \d{x},
  \end{displaymath}
  while, by the Dominated Convergence Theorem, we deduce that
  \begin{displaymath}
    \lim_{\eps \to 0^+} [\mbox{\eqref{eq:48}}]
    =
    \lim_{\eps \to 0^+} \int_{t_{i-1}}^{t_i}
    \int_{\reali} q_k(u_{i, F}(t,x), \tilde{w}_i) \,
    \partial_x (\phi - \phi_\eps)(t,x) \d{x}  \, \d{t}
    = 0 \,.
  \end{displaymath}
  Therefore, we get
  \begin{eqnarray*}
    &
    & \int_{t_{i-1}}^{t_i} \mathcal{I}_{\phi,k} (u_{i.F} (t), \tilde{w}_i) \,
      \d{t}
    \\
    & \geq
    & \int_{\reali}
      \modulo{u_{i,F}(t_i, x) - k} \,
      \phi (t_i, x) \d{x} - \int_{\reali}
      \modulo{u_{i,F}(t_{i-1}, x) - k} \,
      \phi (t_{i-1}, x) \, \d{x} \,.
  \end{eqnarray*}
  Summing over $i$, we obtain that
  \begin{eqnarray}
    \nonumber
    &
    & \sum_{i=1}^N
      \int_{t_{i-1}}^{t_i} \mathcal{I}_{\phi,k} (u_{i.F} (t), \tilde{w}_i) \d{t}
    \\
    \nonumber
    & \geq
    & \sum_{i=1}^N \int_{\reali}
      \modulo{u_{i,F}(t_i, x) - k} \, \phi (t_i, x) \d{x}
      - \sum_{i=1}^N \int_{\reali} \modulo{u_{i,F}(t_{i-1}, x) - k} \,
      \phi (t_{i-1}, x) \d{x}
    \\
    \label{eq:40}
    & =
    & \int_\reali \modulo{u_{N,F}(T,x) - k} \, \phi(T,x) \d{x}
      - \int_\reali \modulo{u_o(x) - k} \, \phi(t_o,x) \d{x}
    \\
    \label{eq:41}
    &
    & \qquad + \sum_{i=1}^{N-1} \int_\reali
      \left(
      \modulo{u_{i,F}(t_i,x) - k} - \modulo{u_{i+1,F}(t_i,x) - k}
      \right) \, \phi(t_i,x) \d{x} \,.
  \end{eqnarray}
  We now estimate the first term in~\eqref{eq:40}:
  \begin{eqnarray*}
    &
    & \int_\reali \modulo{u_{N,F}(T,x) - k} \, \phi(T,x) \d{x}
      - \int_\reali \modulo{P_1(T, t_o)(u_o, w_o) (x) - k} \, \phi(T,x) \d{x}
    \\
    & =
    & \int_\reali
      \left(
      \modulo{
      F_1(T - t_{N-1}, t_{N-1})(\tilde{u}_{N-1}, \tilde{w}_{N-1}) (x) - k}
      - \modulo{P_1(T, t_o)(u_o, w_o) (x) - k}
      \right) \phi(T,x) \d{x}
  \end{eqnarray*}
  and, using $L$ and $\omega$ as in~\eqref{eqn:tangM}, we get
  \begin{eqnarray*}
    &
    & \modulo{
      \int_\reali
      \left(
      \modulo{
      F_1(T - t_{N-1}, t_{N-1})(\tilde{u}_{N-1}, \tilde{w}_{n-1}) (x) - k}
      {-}
      \modulo{P_1(T, t_o)(u_o, w_o) (x) - k}
      \right)\phi(T,x) \d{x}}
    \\
    & \leq
    & \int_\reali
      \left|
      F_1(T - t_{N-1}, t_{N-1})P(t_{N-1}, t_o)(u_o, w_o) (x)
      \right.
    \\
    &
    & \qquad
      \left.
      -
      P_1(T, t_{N-1})P(t_{N-1}, t_o)(u_o, w_o) (x)
      \right|
      \phi(T,x) \d{x}
    \\
    & \leq
    & \frac{2L}{\ln(2)} \; \frac{T - t_o}{N} \; \int_0^{\frac{T - t_o}{N}}
      \frac{\omega(\xi)}{\xi} \d\xi
    \\
    \!\!\!
    & \to
    & 0 \quad \mbox{ as } N \to +\infty \,.
  \end{eqnarray*}
  We now estimate~\eqref{eq:41} using~\eqref{eq:42}
  and~\eqref{eq:tangent}
  \begin{eqnarray*}
    &
    &  \sum_{i=1}^{N-1} \int_\reali
      \modulo{
      \modulo{u_{i,F}(t_i,x) - k} - \modulo{u_{i+1,F}(t_i,x) - k}
      } \,
      \phi(t_i,x) \d{x}
    \\
    & \leq
    & \sum_{i=1}^{N-1} \int_\reali
      \modulo{u_{i,F}(t_i,x) - u_{i+1,F}(t_i,x)} \, \phi(t_i,x) \d{x}
    \\
    & =
    & \sum_{i=1}^{N-1} \int_\reali
      \modulo{
      P^{\tilde w_i}(t_i, t_{i-1}) \tilde u_i(x)
      -
      P_1 (t_{i}, t_{i-1}) \tilde u_i(x)
      }\,
      \phi(t_i,x) \d{x}
    \\
    & \leq
    & \norma{\phi}_{\L\infty ([t_o,T]\times\reali; \reali)}
      \sum_{i=1}^{N-1} \norma{P^{\tilde w_i}(t_i, t_{i-1})
      \tilde u_i
      - P_1 (t_{i}, t_{i-1}) \tilde u_i}_{\L1 (\reali;\reali)}
    \\
    & \leq
    & \frac{2\, L}{\ln 2}
      \norma{\phi}_{\L\infty ([t_o,T]\times\reali; \reali)}
      \sum_{i=1}^{N-1} (t_i - t_{i-1}) \int_0^{t_i-t_{i-1}} \dfrac{\omega (\tau)}{\tau} \d\tau
    \\
    & \leq
    & \frac{2\, L}{\ln 2}
      \norma{\phi}_{\L\infty ([t_o,T]\times\reali; \reali)}
      (T-t_o) \int_0^{(T-t_o)/N} \dfrac{\omega (\tau)}{\tau} \d\tau
    \\
    & \to
    & 0 \quad \mbox{ as } N \to +\infty \,.
  \end{eqnarray*}
  The obtained estimates for~\eqref{eq:40} and~\eqref{eq:41}, as
  $N \to +\infty$, proved Step~2, namely~\eqref{eq:49}.
\end{proofof}

{%
  \appendix
  \section{Appendix: \textbf{BV} Estimates}

  We gather here a few estimates on $\BV$ functions used in the
  proofs.

  \begin{lemma}
    \label{lem:BVresults}
    Recall the following elementary estimates on $\BV$ functions, see
    also~\cite[\S~4.2]{MR4283943} or~\cite{AmbrosioFuscoPallara}:
    \begin{eqnarray}
      \label{eq:TV1}
      \left.
      \begin{array}{@{}r@{}c@{}l@{}}
        u
        & \in
        & \BV (\reali_+; \reali)
        \\
        w
        & \in
        & \BV (\reali_+; \reali)
      \end{array}
          \right\}
        & \Rightarrow
        & \tv (u \, w)
          \leq
          \tv (u) \norma{w}_{\L\infty (\reali_+; \reali)}
          {+}
          \norma{u}_{\L\infty (\reali_+; \reali)} \tv (w)
      \\
      \label{eq:TV2}
      \left.
      \begin{array}{r@{\,}c@{\,}l@{}}
        \varphi
        & \in
        & \C{0,1} (\reali^n; \reali)
        \\
        u
        & \in
        & \BV (\reali_+; \reali^n)
      \end{array}
          \right\}
        & \Rightarrow
        & \tv (\varphi\circ u) \leq \Lip (\varphi) \, \tv (u)
      \\
      \label{eq:TV3}
      \!\!\!\!\!\!\!\!\!\!\!\!\!\!\!\left.
      \begin{array}{@{}l@{}}
        u
        \in
        \L1 (\hat I; \L1(\reali_+;\reali))
        \\
        u (t)
        \in
        \BV (\reali_+; \reali)
      \end{array}
      \right\}
        & \Rightarrow
        & \tv\left(\int_{t_o}^t u (\tau,\cdot) \, \d\tau\right)
          \leq
          \int_{t_o}^t \tv\left(u (\tau)\right) \, \d\tau
      \\
      \label{eq:3TV}
      \left.
      \begin{array}{r@{\,}c@{\,}l@{}}
        u
        & \in
        & \BV (\reali_+; \reali)
        \\
        \delta
        & \in
        & \L\infty(\reali; \reali_+)
      \end{array}
          \right\}
        & \Rightarrow
        & \int_{\reali_+}
          \modulo{u\left(x + \delta (x)\right) - u (x)}  \d{x}
          \leq
          \tv (u) \, \norma{\delta}_{\L\infty (\reali_+; \reali)}
    \end{eqnarray}
    and in~\eqref{eq:TV3} we have $t_o, t \in \hat I$ with
    $t_o \leq t$.
  \end{lemma}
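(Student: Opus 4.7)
The plan is to give brief, self-contained proofs of each of the four estimates, all based on the defining characterization of total variation $\tv(u) = \sup \sum_{i=1}^N \modulo{u(x_i) - u(x_{i-1})}$ over finite partitions $x_0 < x_1 < \cdots < x_N$, or equivalently on the total mass of the Radon measure $Du$. Estimates~\eqref{eq:TV1}, \eqref{eq:TV2} and~\eqref{eq:TV3} are purely algebraic manipulations of finite sums, while~\eqref{eq:3TV} requires a slightly more careful argument involving an exchange of integrals.

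For~\eqref{eq:TV1}, on any partition I would write the telescoping identity
\begin{displaymath}
  u(x_i)\,w(x_i) - u(x_{i-1})\,w(x_{i-1})
  =
  u(x_i)\bigl(w(x_i) - w(x_{i-1})\bigr)
  +
  \bigl(u(x_i) - u(x_{i-1})\bigr)\,w(x_{i-1}),
\end{displaymath}
take absolute values, sum, and bound each factor by its $\L\infty$ norm before passing to the supremum. For~\eqref{eq:TV2}, the Lipschitz bound $\modulo{\varphi(u(x_i)) - \varphi(u(x_{i-1}))} \leq \Lip(\varphi) \norma{u(x_i) - u(x_{i-1})}$ applied termwise and then supremized over partitions yields the estimate immediately.

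For~\eqref{eq:TV3}, setting $U(x) = \int_{t_o}^t u(\tau,x) \d\tau$, I would bound
\begin{displaymath}
  \sum_{i=1}^N \modulo{U(x_i) - U(x_{i-1})}
  =
  \sum_{i=1}^N \modulo{\int_{t_o}^t \bigl(u(\tau,x_i) - u(\tau,x_{i-1})\bigr) \d\tau}
  \leq
  \int_{t_o}^t \sum_{i=1}^N \modulo{u(\tau,x_i) - u(\tau,x_{i-1})} \d\tau
  \leq
  \int_{t_o}^t \tv\left(u(\tau)\right) \d\tau,
\end{displaymath}
and conclude by passing to the supremum over partitions on the left.

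The main obstacle is~\eqref{eq:3TV}, because $\delta$ is not constant, so the standard translation estimate $\int \modulo{u(\cdot+h) - u(\cdot)} \leq \modulo{h}\,\tv(u)$ does not apply directly. The plan is to approximate $u$ by smooth functions (as in \cite[Theorem~3.9]{AmbrosioFuscoPallara}), write
\begin{displaymath}
  \modulo{u(x+\delta(x)) - u(x)}
  \leq
  \modulo{Du}\bigl([x,\,x+\delta(x)]\bigr),
\end{displaymath}
and exchange integrals via Fubini using the observation
$\caratt{\{x \leq y \leq x+\delta(x)\}} \leq \caratt{\{y - \norma{\delta}_{\L\infty} \leq x \leq y\}}$, so that integration in $x$ contributes at most $\norma{\delta}_{\L\infty (\reali_+;\reali)}$ for every fixed $y$; integrating against $\modulo{Du}$ then yields $\tv(u)\,\norma{\delta}_{\L\infty}$. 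An alternative smooth-function argument first establishes $\int \modulo{u(x+\delta(x)) - u(x)} \d{x} \leq \int \int_0^{\delta(x)} \modulo{u'(x+s)} \d{s} \d{x}$ and Fubini finishes, after which a density argument transfers the estimate to general $\BV$ functions.
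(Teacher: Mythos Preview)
Your proposal is correct and follows essentially the same route as the paper: for~\eqref{eq:TV1} the paper gives exactly your partition-plus-telescoping argument, and for~\eqref{eq:TV2} and~\eqref{eq:TV3} it simply says they follow directly from the definition of total variation, which is what your sums do explicitly. For~\eqref{eq:3TV} the paper does not give a proof at all but only cites~\cite[Lemma~2.3]{BressanLectureNotes}, so your Fubini/approximation argument is actually more detailed than what appears in the paper.
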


\begin{proofof}{Lemma~\ref{lem:BVresults}}
  Inequality~\eqref{eq:TV1} follows
  from~\cite[Formula~(3.10)]{AmbrosioFuscoPallara}.  The one
  dimensional proof follows. For any partition $ (x_i)_{i=0}^N $ of
  $\reali_+$, we have
  \begin{eqnarray*}
    &
    & \sum_{i=1}^N \modulo{u(x_i)w(x_i) - u(x_{i-1})w(x_{i-1})}
    \\
    & \leq
    & \sum_{i=1}^N \modulo{u(x_i) - u(x_{i-1})} \, \modulo{w(x_i)}
      + \sum_{i=1}^N \modulo{w(x_i) - w(x_{i-1})} \, \modulo{u(x_{i-1})}
    \\
    & \leq
    & \norma{w}_{\L\infty(\reali_+;\reali)}
      \sum_{i=1}^N \modulo{u(x_i) - u(x_{i-1})}
      + \norma{u}_{\L\infty(\reali_+;\reali)}
      \sum_{i=1}^N \modulo{w(x_i) - w(x_{i-1})}
    \\
    & \leq
    & \tv(u) \, \norma{w}_{\L\infty(\reali_+;\reali)}
      + \norma{u}_{\L\infty(\reali_+;\reali)} \, \tv(w) \, ,
  \end{eqnarray*}
  and taking the supremum over all such sequence, we get our required
  result.

  The definition of total variation directly
  implies~\eqref{eq:TV2}
  and~\eqref{eq:TV3}.  For a proof of~\eqref{eq:3TV} see for
  instance~\cite[Lemma~2.3]{BressanLectureNotes}.
\end{proofof}
}

\section*{Acknowledgments}
RMC and MG were partly supported by the GNAMPA~2022 project
\emph{"Evolution Equations: Well Posedness, Control and
  Applications"}.  MT acknowledges the grant \emph{Wave Phenomena and
  Stability - a Shocking Combination} (WaPheS), by the Research
Council of Norway, and was partly supported by the fund for
international cooperation of the University of Brescia.

\section*{Conflict  of Interest }
The author declares no conflicts of interest in this paper.

{\small

  \bibliographystyle{abbrv}

  \bibliography{ColomboGaravelloTandy}

}

\end{document}